\newtheorem{maintheorem}{Theorem}
\newtheorem{theorem}{Theorem}[section]
\newtheorem{remark}[theorem]{Remark}
\newtheorem{proposition}[theorem]{Proposition}
\newtheorem{lemma}[theorem]{Lemma}
\newtheorem{corollary}[theorem]{Corollary}
\newtheorem{claim}{Claim}
\newtheorem*{claim*}{Claim}
\def\I{\ensuremath{{\bf 1}}}
\def\eps{\varepsilon}
\def\phi{\varphi}
\def\R{{\mathbb R}}
\def\N{{\mathbb N}}
\def\Z{{\mathbb Z}}
\def\E{{\mathcal E}}
\def\L{{\mathcal L}}
\def\H{{\mathcal H}}
\def\P{{\mathcal P}}
\def\Q{{\mathcal Q}}
\def\F{{\mathcal F}}
\def\D{{\mathcal D}}
\def\S{{\mathcal S}}
\def\T{{\mathcal T}}
\def\es{{\emptyset}}
\def\sm{\setminus}
\def\crit{\mbox{\rm Crit}}
\def\bd{\partial }
\def\le{\leqslant}
\def\ge{\geqslant}
\def\st{such that }
\def\0{\underline 0}
\def\x{\mathrm{X}}
\def\c{\mathrm{C}}
\def\r{\mathrm{R}}
\begin{document}

\title{Statistical stability of equilibrium states for interval
maps}
\author{Jorge Milhazes Freitas}
\author{Mike Todd}

\thanks{JMF is partially supported by POCI/MAT/61237/2004 and MT
is supported by FCT grant SFRH/BPD/26521/2006. Both authors are
supported by FCT through CMUP} \subjclass[2000]{37D35, 37C75, 37E05,
37D25} \keywords{Equilibrium states, statistical stability,
thermodynamic formalism, interval maps}

\begin{abstract}
We consider families of transitive multimodal interval maps with
polynomial growth of the derivative along the critical orbits. For
these maps Bruin and Todd have shown the existence and uniqueness of
equilibrium states for the potential
$\varphi_t:x\mapsto-t\log|Df(x)|$, for $t$ close to $1$. We show that
these equilibrium states vary continuously in the weak$^*$ topology
within such families. Moreover, in the case $t=1$, when the
equilibrium states are absolutely continuous with respect to
Lebesgue, we show that the densities vary continuously within these
families.
\end{abstract}

\maketitle

\section{Introduction}
One of the main goals in the study of Dynamical Systems is to
understand how the behaviour changes when we perturb the underlying
dynamics. In this paper, we examine the persistence of statistical
properties of a multimodal interval map $(I, f)$.   In particular we
are interested in the behaviour of the Ces\`aro means
$\frac1n\sum_{k=0}^{n-1}\phi\circ f^k(x)$ for a potential $\phi:I \to
\R$ for `some' points $x$, as $n \to \infty$. If the system possesses
an invariant \emph{physical measure} $\mu$, then part of this
statistical information is described by $\mu$ since, by definition of
physical measure, there is a positive Lebesgue measure set of points
$x\in I$ such that
\[
\overline\phi(x):=\lim_{n\to\infty}\frac1n\sum_{k=0}^{n-1}\phi\circ
f^k(x)=\int \phi~d\mu.
\]
If for nearby dynamics these measures are proven to be close, then
the Ces\`aro means do not change much under small deterministic
perturbations. This motivated Alves and Viana \cite{AV:2002} to
propose the notion of \emph{statistical stability}, which expresses
the persistence of statistical properties in terms of the continuity,
as a function of the map $f$, of the corresponding physical measures.
A precise definition will be given in
Section~\ref{subsec:statement-results}

However, the study of Ces\`aro means is not confined to the analysis
of these measures.  The study of other classes can be motivated
through the encoding of these statistical properties by `multifractal
decomposition', see \cite{Pesbook} for a general introduction.  Given
$\alpha\in \R$, we define the sets $$B_\phi(\alpha):=\{x\in
I:\overline{\phi}(x) =\alpha\},\ B_\phi':=\{x\in I:\overline{\phi}(x)
\hbox{ does not exist}\}.$$   Then the multifractal decomposition in
this case is
$$I=B_\phi'\cup\left(\bigcup_\alpha B_\phi(\alpha)\right).$$
Understanding the nature of this decomposition gives us information
about the statistical properties of the system.  This can be studied
via `equilibrium states'. See \cite{PesWeiss} for a fuller account of
these ideas, where the theory is applied to subshifts of finite
type.

To define equilibrium states, given a potential $\phi:I \to \R$, we
define the \emph{pressure} of $\phi$ to be
$$P(\phi):=\sup\left\{h_\mu+\int\phi~d\mu\right\},$$
where this supremum is taken over all invariant ergodic probability
measures.  Here $h_\mu$ denotes the metric entropy of the system $(I,
f,\mu)$.  Any such measure $\mu$ which `achieves the pressure', i.e.
$h_\mu+\int\phi~d\mu=P(\phi)$, is called an \emph{equilibrium state}
for $(I, f,\phi)$.

In this paper for a given map $f$, we are interested in the
equilibrium state $\mu_t$ of the `natural' potential $\phi_t:x\mapsto
-t\log|Df(x)|$ for different values of $t$.
For a multimodal map $f$ and an $f$-invariant probability measure $\mu$ we denote the \emph{Lyapunov exponent of $\mu$} by $\lambda(\mu):=\int\log|Df|~d\mu$.
For any $f$ in the class of multimodal  maps $\F$ which we define below, Ledrappier \cite{Ledrap} showed that for $t=1$, there is an equilibrium state $\mu_1$ with
$\lambda(\mu_1)>0$ if and only if $\mu_1$ is absolutely continuous with respect to Lebesgue.  We then refer to $\mu_1$ as an acip.  In this setting any acip is also a physical measure.

Using tools developed by
Keller and Nowicki in \cite{KelNow},  Bruin and Keller \cite{BK} further developed this theory,
showing that for unimodal Collet-Eckmann maps there is an equilibrium
state $\mu_t$ for $\phi_t$ for all $t$ close to 1.  This range of parameters
was extended to all $t$ in a neighbourhood of $[0,1]$ for a special
class of Collet-Eckmann maps by Pesin and Senti \cite{PeSe}. Bruin
and the second author showed similar results for the non-Collet
Eckmann multimodal case in \cite{BTeqnat}.

The \emph{Lyapunov exponent of a point $x\in I$} is defined as $\overline\phi_1(x)$, if this limit exists.  So the set of points with the same Lyapunov exponent is $B_{\phi_1}(\alpha)$.  If $f$ is
transitive and there exists an acip then by the ergodic theorem
$\mu_1(B_{\phi_1}(\lambda(\mu_1))=1$. As shown in \cite{Tmulti},
under certain growth conditions on $f$, for a given value of
$\alpha$, close to $\lambda(\mu_1)$, there is an equilibrium state
$\mu_t$ supported on $B_{\phi_1}(\alpha)$ for some $t$ close to 1.
Therefore, to understand the statistics of the system with potential
$\phi_1$, it is useful to study the properties of the equilibrium states $\mu_t$.

We would like to point out that for continuous potentials $\upsilon:I
\to \R$ the theory of statistical stability has been studied in
\cite{Aru}.  The strength of the current paper is that we deal with
the more natural class of potentials $\phi_t$, which are not
continuous.  By \cite[Theorem 5]{BTeqnat} and \cite[Theorem
6]{BTeqgen} under a growth condition on $f$, there are equilibrium
states for the potential $x\mapsto \phi_t(x)+s\upsilon$ if $\upsilon$
is H\"older continuous and $(t, s)$ is close to $(1, 0)$ or to
$(0,1)$.  For ease of exposition we will not consider this more
general class of potentials here, but we note that the statistical
stability for this more general class equilibrium states can be
proved with only minor changes.

Our method provides a natural framework, via
Gibbs states for inducing schemes, to study convergence of
equilibrium states, broadening and simplifying previous approaches.
Many of our proofs rely strongly on theory developed in \cite{BLS} and \cite{BTeqnat}. Aside from the larger class of measures considered
here, we are also able to study a class of maps with only polynomial
growth along the critical orbit and hence, by \cite{BLS}, whose acips may only have polynomial decay of correlations.  The maps considered here
are multimodal, but we would also like to point out the theory
presented in this paper extends to other cases such as Manneville-Pomeau maps, see Remark~\ref{rmk:M-P}.

\subsection{Statement of results}
\label{subsec:statement-results}

Here we
establish our setting and make our statements more precise.  Let
$\crit=\crit (f)$ denote the set of critical points of $f$. We say
that $c\in \crit$ is a \emph{non-flat} critical point of $f$ if there
exists a diffeomorphism $g_c:\R \to \R$ with $g_c(0)=0$ and
$1<\ell_c<\infty$ \st for $x$ close to $c$,
$f(x)=f(c)\pm|g_c(x-c)|^{\ell_c}$.  The value of $\ell_c$ is known as
the \emph{critical order} of $c$.  We define
$\ell_{max}(f):=\max\{\ell_c:c\in {\rm Crit}(f)\}$. Throughout $\H$
will be the collection of $C^2$ interval maps which have negative
Schwarzian (that is, $1/{\sqrt{|Df|}}$ is convex away from critical
points) and all critical points non-flat.

The \emph{non-wandering set} is the set of points $x\in I$ \st for
arbitrarily small neighbourhoods $U$ of $x$ there exists $n=n(U)\ge
1$ \st $f^n(U)\cap U\neq \es$.  This is the dynamically interesting
set.  As in \cite{HofRai}, for piecewise monotone $C^2$ maps this set
splits into a possibly countable number of sets $\Omega_k$ on which
$f$ is topologically transitive.  As in \cite[Section III.4]{MSbook}
(see also \cite[Section 2.2]{BTeqnat}), these sets $\Omega_k$ can be
Cantor sets or finite sets if the map is renormalisable: that is if
there exists a cycle of intervals $J, f(J), \ldots, f^p(J)$ so that
$f^p(J) \subset J$, $f^p(\bd J) \subset \bd J$.  The other
possibility is that $\Omega_k$ is a cycle of intervals permuted by
$f$.  Our analysis extends to any such cycle of intervals, and indeed
in that case we do not need to make any assumptions on critical
points outside the cycle under consideration.  However, for ease of exposition, we will
assume that  maps in $\H$ are non-renormalisable with only one
transitive component $\Omega$ of the non-wandering set, a cycle of
intervals.  We also assume that for any
$f\in \H$, $f^j(\crit)\cap f^k(\crit)\neq \es$ implies $j=k$.  For
maps failing this assumption, either $f^k(\crit) \cap \crit \neq \es$
for some $k\in \N$, in which case we could consider these relevant
critical points in a block; or some critical point maps onto a
repelling periodic cycle, which we exclude here for ease of
exposition since our method is particularly tailored to case of more
interesting maps where the critical orbits are infinite. It is also
convenient to suppose that there are no points of inflection.

Let $\H_{r,\ell}\subset \H$ denote the set of maps $f\in\H$ with $r$
critical points with $\ell_{max}(f)\le \ell$. We will consider
families of maps in $\H$ which satisfy the following conditions.  The
first one is the Collet-Eckmann condition: For any $r\in \N$,
$\ell\in (1,\infty)$ and $C, \alpha>0$, the class $\F_e(r,\ell,C,
\alpha)$ is the set
\begin{align} f\in \H_{r,\ell} \hbox{ such that } |Df^n(f(c))|
\ge Ce^{\alpha n} \hbox{ for all } c\in \crit \hbox{ and } n\in \N.
\label{eq:CE}
\end{align}
Secondly we consider maps satisfying a polynomial growth condition:
For any $r\in \N$, $\ell\in (1,\infty)$, $C>0$, and any
$\beta>2\ell$, the  class $\F_p(r, \ell, C, \beta)$ is the set
\begin{align}
f\in \H_{r,\ell} \hbox{ such that } |Df^n(f(c))| \ge Cn^\beta \hbox{
for all } c\in \crit \hbox{ and } n\in \N. \label{eq:poly}
\end{align}
We will take a map $f_0\in\F$ where we suppose that either $\F=
\F_e(r,\ell,C, \alpha)$ or $\F=\F_p(r, \ell, C, \beta)$, and consider
the continuity properties of equilibrium states for maps in $\F$ at
$f_0$.

We will consider equilibrium states for maps in these families.
Suppose first that $\F= \F_e(r,\ell, C, \alpha)$.  Then by
\cite[Theorem 2]{BTeqnat}, there exists an open interval
$U_{\F}\subset \R$ containing 1 and depending on $\alpha$ and $r$ so
that for $f\in \F$ and $t\in U_{\F}$ the potential
$\phi_{f,t}:x\mapsto -t\log|Df(x)|$ has a unique equilibrium state
$\mu=\mu_{f}$.  We note that by \cite{MSz}, the fact that there are
$r$ critical points gives a uniform upper bound $\log (r+1)$ on the
topological entropy, which plays an important role in the
computations which determine $U_{\F}$ in \cite{BTeqnat}.  If instead
we assume that $\F=\F_p(r, \ell, C, \beta)$ then by \cite[Theorem
1]{BTeqnat} we have the same result but instead $U_{\F}$ is of the
form $(t_{\F},1]$ where $t_{\F}$ depends on $\beta, \ell$ and $r$.

We choose our family $\F$, fix $t\in U_{\F}$ and denote $\phi_{f,t}$
by $\phi_f$.  For every sequence $(f_n)_n$ of maps in $\F$ we let
$\mu_{n,t}=\mu_{f_n,t}$ denote the corresponding equilibrium state
for each $n$ with respect to the potential $\phi_{f_n}$.   We fix
$f_0\in \F$ and say that $\mu_{0,t}$ is \emph{statistically stable
within the family $\F$}
 if for any sequence $(f_n)_n$ of maps in $\F$ such that
 $\|f_n-f_0\|_{C^2} \to 0$ as $n\to
\infty$, we have that $\mu_{0,t}$ is the weak$^*$ limit of
$(\mu_{n,t})_n$.

\begin{maintheorem}
\label{thm:main} Let $\F\subset\H$ be a family satisfying
\eqref{eq:CE} or \eqref{eq:poly} with potentials $\phi_{f,t}$ as
above.  Then, for every fixed $t\in U_{\F}$ and $f\in\F$, the
equilibrium state $\mu_{f,t}$ as above is statistically stable within
the family $\F$.
\end{maintheorem}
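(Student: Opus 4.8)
The plan is to push everything through the inducing schemes that underlie the construction of the $\mu_{f,t}$ in \cite{BTeqnat} and \cite{BLS}, and to show that these schemes and their Gibbs data depend continuously on $f\in\F$. Recall the mechanism: for $f\in\F$ and $t\in U_{\F}$ the equilibrium state $\mu_{f,t}$ is the projection of a Gibbs state for an induced map, i.e.\ there is a domain $Y_f$ (a finite union of intervals with ``nice'' boundary), an inducing time $\tau_f:Y_f\to\N$, and $F_f:=f^{\tau_f}:Y_f\to Y_f$ is a full-branched uniformly expanding map with uniformly bounded distortion; the normalised induced potential $\Phi_f:=S_{\tau_f}\phi_{f,t}-P(\phi_{f,t})\tau_f$ has a unique $F_f$-invariant Gibbs state $\nu_f$ with $\int_{Y_f}\tau_f\,d\nu_f<\infty$, and
\[
\int_I\psi\,d\mu_{f,t}=\Bigl(\int_{Y_f}\tau_f\,d\nu_f\Bigr)^{-1}\int_{Y_f}\sum_{j=0}^{\tau_f-1}\psi\circ f^{\,j}\,d\nu_f
\]
for every continuous $\psi:I\to\R$. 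So it suffices to prove, for any $f_n\to f_0$ in $C^2$ within $\F$: (i) the induced data $(Y_{f_n},\tau_{f_n},F_{f_n},\Phi_{f_n})$ converge branch-by-branch to those of $f_0$, and $P(\phi_{f_n,t})\to P(\phi_{f_0,t})$; (ii) $\nu_{f_n}\to\nu_{f_0}$ in the weak$^*$ topology and $\int\tau_{f_n}\,d\nu_{f_n}\to\int\tau_{f_0}\,d\nu_{f_0}$; (iii) the right-hand side above passes to the limit.

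For (i), fix a nice set for $f_0$; since $f_n\to f_0$ in $C^2$, for each fixed $N$ the iterates $f_n^{\,k}$ with $k\le N$ converge uniformly together with their derivatives, so the nice set remains nice for all large $n$, the combinatorics of the branches with $\tau_f\le N$ stabilise, the corresponding branch domains converge, and $F_{f_n}\to F_{f_0}$ in $C^2$ on each such branch; hence $\Phi_{f_n}\to\Phi_{f_0}$ on each branch once $P(\phi_{f_n,t})\to P(\phi_{f_0,t})$ is known. Upper semicontinuity of the pressure is standard here (upper semicontinuity of metric entropy for $C^2$ interval maps, the uniform bound $\log(r+1)$ on topological entropy, and the fact that the growth conditions prevent escape of $\int\phi_{f,t}$ to the critical set under weak$^*$ limits); lower semicontinuity follows either by testing with a fixed measure coming from the inducing scheme, or directly from the root-finding characterisation of $P(\phi_{f,t})$ via the inducing scheme in \cite{BTeqnat}.

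For (ii) and (iii): the maps $F_{f_n}$ form a family of uniformly Gibbs--Markov maps, so their transfer operators act with a spectral gap that is uniform in $n$ on a common space of dynamically H\"older functions, and converge branch-by-branch to that of $F_{f_0}$. The extra ingredient is a \emph{uniform} tail estimate $\nu_{f_n}(\tau_{f_n}>k)\le\epsilon_k$ with $\epsilon_k$ decaying exponentially in the Collet--Eckmann case and summably, with $\sum_k k\,\epsilon_k<\infty$, in the polynomial case (this is where $\beta>2\ell$ enters), obtained by tracking through the estimates of \cite{BLS} and \cite{BTeqnat} the fact that they depend only on the constants built into \eqref{eq:CE} and \eqref{eq:poly}. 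Spectral perturbation then yields $\nu_{f_n}\to\nu_{f_0}$, convergence of the densities $d\nu_{f_n}/dm_{f_n}$ (which, projected, gives the density statement when $t=1$), and, via the uniform tail, uniform $\nu_{f_n}$-integrability of $\tau_{f_n}$, hence $\int\tau_{f_n}\,d\nu_{f_n}\to\int\tau_{f_0}\,d\nu_{f_0}$. For (iii), fix continuous $\psi$: for each fixed $j$, $\mathbf 1_{\{\tau_{f_n}>j\}}\,\psi\circ f_n^{\,j}$ converges boundedly to $\mathbf 1_{\{\tau_{f_0}>j\}}\,\psi\circ f_0^{\,j}$, while the whole sum is dominated by $\|\psi\|_\infty\,\tau_{f_n}$, which is uniformly integrable; combining with $\nu_{f_n}\to\nu_{f_0}$ gives convergence of the numerator, and dividing by the convergent nonzero normalisers gives $\int\psi\,d\mu_{f_n,t}\to\int\psi\,d\mu_{f_0,t}$. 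Alternatively one packages (iii) as a compactness argument: every weak$^*$ limit of $(\mu_{f_n,t})_n$ is $f_0$-invariant and, by the above, equals the projection of $\nu_{f_0}$, hence equals $\mu_{f_0,t}$ by the uniqueness in \cite{BTeqnat}, so the whole sequence converges to $\mu_{f_0,t}$.

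The main obstacle is step (ii), and inside it the uniform tail estimate together with the fact that the inducing domains $Y_{f_n}$ themselves move with $f$: one must verify that the delicate combinatorial and metric bounds on return times in \cite{BLS} and \cite{BTeqnat} are genuinely uniform over $\F$, and handle the motion of $Y_{f_n}$ (for instance by working with nice sets that persist under $C^2$-perturbation and a diagonal argument over branches). Once the inducing picture has been stabilised in this way, the remainder is a routine spectral-perturbation and uniform-integrability argument.
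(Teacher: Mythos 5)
Your high-level framework matches the paper's (induce, prove pressure continuity, prove uniform tails, pass convergence through the projection), and your remarks on uniform tails and on the root-finding characterisation of pressure via inducing are both exactly in the paper's spirit. But the central step (ii), where you claim $\nu_{f_n}\to\nu_{f_0}$ by ``spectral perturbation'' on a ``common space of dynamically H\"older functions,'' is precisely the step the paper is at pains to \emph{avoid}, and as stated it has a gap. The induced maps $F_{f_n}$ and $F_{f_0}$ live on different domains $Y_{f_n}$ with different (and only partially matching) branch structure, so there is no obvious common Banach space on which the transfer operators $\L_{\Phi_{f_n}}$ act; ``branch-by-branch convergence'' is far from the operator-norm or Keller--Liverani-type convergence one needs to transfer spectral data, and you assert rather than establish a uniform spectral gap. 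Making this rigorous would require fixing a symbolic model and controlling the operators across a moving countable alphabet, which is substantially harder than what you sketch.

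The paper's actual mechanism is different and more elementary: it does not use the transfer operator's spectral gap at all. Instead it extracts a weak$^*$ limit $\mu_{F_\infty}$ of the induced measures $\mu_{F_n}$ and shows (a) the Gibbs constants $H_\F$, $B_k$ can be chosen uniformly in $n$ (Remark~\ref{rmk:distn}, Lemma~\ref{lem:H}), so that the Gibbs inequality passes to the limit cylinder-by-cylinder (Proposition~\ref{prop:Fn fringe}, Corollary~\ref{cor:gibbs-property-of-mu}); and (b) $\mu_{F_\infty}$ is $F_0$-invariant (Lemma~\ref{lem:invariance-of-mu}). The uniqueness of invariant Gibbs states on the countable full shift \cite{MauUrb} then forces $\mu_{F_\infty}=\mu_{F_0}$. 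Your closing ``alternatively one packages (iii) as a compactness argument'' is much closer to what the paper does, but note the paper runs the compactness/uniqueness argument at the level of the \emph{induced} Gibbs measures, not the projected equilibrium states, because uniqueness of Gibbs states is what is available; you should replace your spectral claim with this Gibbs-property-plus-uniqueness argument to make the proof sound.
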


Although the definition of statistical stability involves convergence
of measures in the weak$^*$ topology, when we are dealing with acips,
it makes sense to consider a stronger type of stability due to Alves
and Viana \cite{AV:2002}: for a fixed $f_0\in\F$, we say that the
acip $\mu_{f_0}$ is  \emph{strongly statistically stable} in the
family $\F$ if for any sequence $(f_n)_n$ of maps in $\F$ such that
$\|f_n-f_0\|_{C^2} \to 0$ as $n\to \infty$ we have
\begin{equation}
 \int \left|\frac{d\mu_{f_n}}{dm} -
 \frac{d\mu_{f_0}}{dm}\right|~dm\xrightarrow[n\to\infty]{}0,
 \label{eq:cty}
\end{equation}
where $m$ denotes Lebesgue measure and $\mu_{f_n}$ and $\mu_{f_0}$
denote the acips for $f_n$ and $f_0$ respectively. As a byproduct of
the proof of Theorem~\ref{thm:main} we also obtain:

\begin{maintheorem}
\label{thm:strong-stat-stab} Let $\F\subset\H$ be a family satisfying
\eqref{eq:CE} or \eqref{eq:poly}. Then, for every $f\in\F$, the acip
$\mu_{f}$ is strongly statistically stable.
\end{maintheorem}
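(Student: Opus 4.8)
The plan is to deduce Theorem~\ref{thm:strong-stat-stab} from the machinery assembled in the proof of Theorem~\ref{thm:main}, specialising to $t=1$ and then upgrading weak$^*$ convergence of the acips $\mu_{f_n}=\mu_{f_n,1}$ to $L^1$-convergence of their densities with respect to Lebesgue measure $m$. The key structural tool will be the inducing schemes: for each $f\in\F$ one has a first-return-type (or Hofbauer-type) induced Markov map $F=F_f$ with inducing time $\tau=\tau_f$, and the acip $\mu_f$ is obtained by spreading around an $F$-invariant measure $\mu_F$ via the Kac/Abramov formula, with $\mu_f \ll m$ and density expressible through the Gibbs/conformal data of the induced system. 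The first step is therefore to show that one may choose the inducing schemes for $f_n$ and $f_0$ \emph{compatibly}: a single combinatorial inducing structure works for all $f_n$ with $n$ large, the domains of the induced branches vary continuously in the $C^2$-topology, and the associated distortion constants and tail estimates $\sum_{\tau>N} m(\{\tau=\tau_n\})$ are uniform in $n$. This uniform tail bound — valid because $f_n\to f_0$ in $\F$ keeps the Collet--Eckmann constants $(C,\alpha)$, resp.\ the polynomial constants $(C,\beta)$, and hence the required $\beta>2\ell$ throughout the family — is what I expect to be the main obstacle, and it is exactly the point where the hypotheses \eqref{eq:CE}/\eqref{eq:poly} and the results of \cite{BLS,BTeqnat} are used; without it the densities could lose mass to infinitely long inducing branches and $L^1$-convergence would fail even though weak$^*$ convergence holds.

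Granting the compatible inducing schemes, I would argue in three further steps. First, on each cylinder $Z$ of the common Markov partition, the local branches $F_n|_Z$ converge in $C^2$ to $F_0|_Z$, so the transfer operators $\L_n$ of the induced maps (acting on the appropriate space of functions of bounded variation, or bounded Hölder functions, on the cylinders) converge to $\L_0$ in the relevant operator sense; combined with the uniform Lasota--Yorke / spectral-gap estimates supplied by the uniformity of distortion and tails, a standard perturbation argument (in the spirit of Keller--Liverani, or as used in \cite{BLS}) gives convergence of the leading eigenfunctions, i.e.\ the densities $h_n := d\mu_{F_n}/dm \to h_0 = d\mu_{F_0}/dm$ in $L^1(m)$, indeed in $BV$. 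Second, I push these forward: the density of $\mu_{f_n}$ is $\frac{d\mu_{f_n}}{dm}(x) = \frac{1}{\int \tau_n\,d\mu_{F_n}}\sum_{j\ge 0}\sum_{Z:\tau_n|_Z>j} (\text{pullback of } h_n \text{ under the appropriate branch of }f_n^j)$, and the uniform tail control lets me bound the tail of this sum uniformly in $n$ and pass to the limit termwise, using the finite-branch $C^2$-convergence for each of the finitely many ``head'' terms. Third, I note that the normalising constants $\int \tau_n\,d\mu_{F_n}$ converge to $\int \tau_0\,d\mu_{F_0}$ (again by the uniform tail estimate plus $\mu_{F_n}\to\mu_{F_0}$), which is finite and positive because $\mu_{f_0}$ exists as an acip, so the Kac normalisation is harmless in the limit.

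Finally I would assemble these pieces: writing $\rho_n=\frac{d\mu_{f_n}}{dm}$, the termwise convergence of the head, the uniform smallness of the tail, and the convergence of the normalising constant together give $\int|\rho_n-\rho_0|\,dm\to 0$, which is precisely \eqref{eq:cty}. A couple of remarks on bookkeeping: one must check that $t=1$ indeed lies in $U_\F$ in both the Collet--Eckmann case (where $1$ is interior) and the polynomial case (where $U_\F=(t_\F,1]$ contains $1$), so the induced-system construction of \cite{BTeqnat} applies and $\mu_{f_n}=\mu_{f_n,1}$ is the acip; and one should record that ``$f_n\to f_0$ in $C^2$ within $\F$'' does keep the orbits of $\Crit(f_n)$ comparable to those of $\Crit(f_0)$ on the relevant finite time scales, so that the head terms really do converge branch by branch. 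The only genuinely delicate estimate is the uniform tail bound, and I would isolate it as a lemma, proving it by the same arguments (based on \cite{BLS}) that were used within the proof of Theorem~\ref{thm:main} to obtain the uniform-in-$n$ Gibbs and distortion estimates, now read off at $t=1$.
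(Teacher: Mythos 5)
Your high-level plan is sound and its crux is the same as the paper's, but your route is genuinely different from what the authors do, and one of your technical steps is heavier (and less cleanly justified) than necessary. The paper does \emph{not} rederive $L^1$-convergence of the densities from scratch: it observes that the inducing schemes built in Proposition~\ref{prop:convergent ind}, together with Lemma~\ref{lem:meas-prox-cylin} and Lemma~\ref{lem:uniform decay}, verify exactly the abstract uniformity hypotheses $U_1$ (Lebesgue-closeness of matched cylinders), $U_2$ (uniform decay of the inducing-time tail), and $U_3$ (uniform distortion, backward contraction, expansion constants across a $C^2$-neighbourhood) of \cite[Theorem~A]{AV:2002}, and then simply cites that theorem to conclude~\eqref{eq:cty}. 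You instead propose to reprove the content of the Alves--Viana theorem in this setting: compatible inducing schemes $\Rightarrow$ uniform Lasota--Yorke / Keller--Liverani spectral perturbation for the induced transfer operators $\Rightarrow$ $L^1$ (indeed $BV$) convergence of the induced densities $\Rightarrow$ termwise push-forward with uniform tail control $\Rightarrow$ convergence of the Kac normalisation. That is a legitimate alternative and is morally what lies beneath AV's result, and you correctly identify the uniform tail bound (Lemma~\ref{lem:uniform decay}) as the real obstacle and as the place where \eqref{eq:CE}/\eqref{eq:poly} and \cite{BLS,BTeqnat} enter.

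The one place I would push back is the spectral-perturbation step. The induced maps $F_n$ and $F_0$ are full-branch Markov maps with \emph{countably many} branches on \emph{different} (though convergent) domains $X_n$, so ``$\L_n\to\L_0$ in the relevant operator sense'' is not literal: you must first conjugate through the order-preserving matchings $h_{n,k}$ (or equivalently work on the symbolic full shift and transport the potentials), and then verify a uniform Lasota--Yorke inequality over a countably-branched system where, a priori, branches can be arbitrarily short. That this works out is plausible given the uniform distortion from Remark~\ref{rmk:distn} and the uniform tail, but it is real additional work, and it is precisely the work that \cite{AV:2002} have already packaged. The paper's choice to check $U_1$--$U_3$ and invoke \cite[Theorem~A]{AV:2002} avoids re-opening the spectral machinery; your approach is more self-contained but needs the countable-branch / variable-domain issues addressed explicitly before it would count as a complete proof.
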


For uniformly hyperbolic maps, it is known that the measures do not
merely vary continuously with the map, but actually vary
differentiably in the sense of Whitney.  For example, if $f_0:M\to M$
is a $C^3$ Axiom A diffeomorphism of a manifold $M$ with an unique
physical measure $\mu_0$, and the family $t \mapsto f_t$ is $C^3$,
then the map $t \mapsto \int\psi~d\mu_t$ is differentiable at $t=0$
for any real analytic observable $\psi:M\to \R$, see \cite{Rue}. We
would like to emphasise that the situation for non-uniformly
hyperbolic maps is quite different.  For example if $\F$ is the class
of quadratic maps for which acips exist then it was shown in
\cite{Th:2001} that these measures are not even continuous everywhere in this
family, although as proved in \cite{Ts:1996} they are continuous on a
positive Lebesgue measure set of parameters.  It has been conjectured
in \cite{Bal} that if $\F$ is the set of quadratic maps with some
growth along the critical orbit then the acips should be at most
H\"older continuous in this class, see also \cite{Balopen}. For a
positive result in that direction \cite{RS:1997} proved the H\"older continuity of the densities of the acips as in \eqref{eq:cty} for Misiurewicz parameters. Later, in
\cite{Frei}, strong statistical stability was proved for
Benedicks-Carleson quadratic maps, which are unimodal and satisfy
condition \eqref{eq:CE}. Hence, Theorem~\ref{thm:strong-stat-stab}
provides a generalisation of this last result.

The following proposition shows that the pressure function is
continuous in the family $\F$ with potentials $\phi_{f,t}$.
As with the proof of the main theorems, the proof uses the inducing structure and
also the fact that we have some `uniform decay rate' on these
inducing schemes.  We note that the continuity of the pressure for
continuous potentials was related to statistical stability for
non-uniformly expanding systems in \cite{Aru}.

\begin{proposition}
Let $\F$ satisfy either \eqref{eq:CE} or \eqref{eq:poly} and let
$t\in U_{\F}$.  If $\{f_n\}_{n=0}^\infty\subset \F$ is such that
$\|f_n-f_0\|_{C^2} \to 0$ as $n\to \infty$, then $P(\phi_{f_n,t}) \to
P(\phi_{f_0,t})$ as $n\to \infty$. \label{prop:press cts}
\end{proposition}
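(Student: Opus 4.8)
The plan is to reduce the proposition to the continuity in $f$ of the induced pressures attached to the inducing schemes used to build the equilibrium states. Recall from \cite{BTeqnat} that for each $f\in\F$ the equilibrium state $\mu_{f,t}$ is the lift of the unique equilibrium state of a countable-branch inducing scheme $(X_f,F_f,\tau_f)$, $F_f=f^{\tau_f}$, and that $P(\phi_{f,t})$ is the unique real number $p=p(f)$ for which
$$\mathcal{P}_f(p):=P_{F_f}\bigl(\phi_{f,t}^{\tau_f}-p\,\tau_f\bigr)=0,\qquad \phi^{\tau}:=\sum_{i=0}^{\tau-1}\phi\circ f^{i};$$
moreover, by the same construction together with the tail estimates of \cite{BLS,BTeqnat}, there is a compact interval $V$, which may be taken the same for all $f_n$ with $n$ large, on which every $\mathcal{P}_{f_n}$ is finite, continuous and strictly decreasing, and which contains all the numbers $p(f_n)$ (here one uses that the equilibrium states $\mu_{n,t}$ have Lyapunov exponent bounded away from $0$ and $\infty$ uniformly within $\F$, by \eqref{eq:CE} or \eqref{eq:poly} together with the uniform bound $h_\mu\le\log(r+1)$ from \cite{MSz}, so that $p(f_n)=h_{\mu_{n,t}}-t\lambda(\mu_{n,t})$ stays in a fixed compact interval). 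Granting these structural facts, it suffices to prove
$$\sup_{p\in V}\bigl|\mathcal{P}_{f_n}(p)-\mathcal{P}_{f_0}(p)\bigr|\xrightarrow[n\to\infty]{}0.$$
Indeed, then along any subsequence with $p(f_{n_k})\to p_*$ we get $0=\mathcal{P}_{f_{n_k}}(p(f_{n_k}))\to\mathcal{P}_{f_0}(p_*)$ (using also continuity of $\mathcal{P}_{f_0}$), so strict monotonicity forces $p_*=p(f_0)=P(\phi_{f_0,t})$; as $(p(f_n))_n$ is bounded with every subsequential limit equal to $p(f_0)$, the conclusion follows.

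To prove the displayed uniform convergence the plan is to run the whole tail of the family through a single combinatorial inducing framework. Using the $C^2$-convergence $f_n\to f_0$ and the uniform Collet--Eckmann (resp.\ polynomial) constants defining $\F$, I would arrange that the ``nice set'' underlying the construction of $(X_{f_0},F_{f_0},\tau_{f_0})$ in \cite{BTeqnat} is also an admissible inducing domain for all $f_n$ with $n$ large, and that for each fixed $N$ the branches of $F_{f_n}$ of return time at most $N$ converge, as intervals and with their derivatives, to those of $F_{f_0}$; this is a statement about finitely many initial segments of orbits, and is exactly where $C^2$-closeness enters. On each such branch $Z$ the induced potential $\phi_{f_n,t}^{\tau_n}-p\,\tau_n$ has uniformly bounded distortion (negative Schwarzian and the Koebe principle, as in \cite{BLS,BTeqnat}), so the finitely many contributions coming from return times $\le N$ converge, uniformly in $p\in V$, to their $f_0$-counterparts. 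The contribution of the branches of return time $>N$ to $\mathcal{P}_{f_n}(p)$ is bounded, uniformly in $n$ and in $p\in V$, by the uniform decay (tail) estimate for these inducing schemes furnished by \cite{BLS} in the polynomial case and \cite{BTeqnat} in general --- this is the ``uniform decay rate'' alluded to in the statement --- and this bound tends to $0$ as $N\to\infty$. Combining these two facts, together with the stability of the induced pressure under truncation to finitely many branches (standard for countable Markov shifts, via approximation by finite subsystems), gives the required uniform convergence.

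The main obstacle lies entirely in this last step, and within it the genuinely delicate point is producing the \emph{common} inducing framework: one must verify that the nice-set / Hofbauer-tower construction of \cite{BTeqnat} is robust, in the sense that it can be carried out simultaneously for all maps $C^2$-close to $f_0$, with branch data depending continuously on the map up to each finite return-time level and with tail bounds uniform over the family. This robustness is natural precisely because $\F$ is defined through uniform hyperbolicity/growth constants, but establishing it carefully --- controlling how the nice sets and the associated combinatorics vary with $f$ --- is where the real work of the proof is concentrated. Once it is in hand, the truncation estimate above yields the uniform convergence of $\mathcal{P}_{f_n}$ to $\mathcal{P}_{f_0}$, hence the continuity of the pressure function.
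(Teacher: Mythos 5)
Your strategy is essentially the paper's: both reduce the problem to comparing induced partition functions by truncating to branches with inducing time $\le N$ (where $C^2$-closeness of $f_n$ and $f_0$ gives convergence of the finitely many retained branches), controlling the remainder via the uniform tail bound of Lemma~\ref{lem:uniform decay}, and then exploiting $\tau\ge 1$ to convert closeness of induced pressures into closeness of their zeros --- your strict monotonicity of $p\mapsto\mathcal P_f(p)$ is precisely the paper's observation $\tau_{0,k}\ge k$ in the last step. The one place where your packaging is slightly riskier than the paper's is the claim of uniform convergence of $\mathcal P_{f_n}$ to $\mathcal P_{f_0}$ on a whole compact interval $V$: away from the critical value $p=P_n$ the finiteness and tail decay of the induced pressure are less automatic, and the paper sidesteps this by comparing the truncated partition functions $Z_k\left(\Psi_{n,1}^{P_n},\c_{n,1}^i,N\right)$ and $Z_k\left(\Psi_{0,1}^{P_0},\c_{0,1}^i,N\right)$ directly at the two specific parameters $P_n$ and $P_0$ rather than as functions of $p$ over $V$.
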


\subsection{Structure of the paper}

In Section~\ref{sec:Hofbauer}, we build inducing schemes for each
$f\in\F$ and show that the construction can be shadowed for nearby
dynamics. Although other methods could be used to build the inducing
schemes, we use Hofbauer towers. In
Section~\ref{sec:press}, we introduce some thermodynamic formalism
from  \cite{SaBIP}.  As proved in \cite{BTeqnat}, this gives us the
existence and uniqueness of equilibrium states for our inducing
schemes for the relevant induced potentials.  In particular, these
equilibrium states satisfy the Gibbs property.   In
Section~\ref{sec:gibbs-property} we show that the weak$^*$ limit of
Gibbs measures is also a Gibbs measure.  We also prove
Proposition~\ref{prop:press cts}, the continuity of the pressure over
the family $\F$. In Section~\ref{sec:invariance-weak-limit} this
Gibbs measure is shown to be invariant. Finally, in
Section~\ref{sec:continuity-equilibrium-states} we show that the
continuity of the measures survives the projection of the induced
measures into the original equilibrium states, completing the proof
of Theorem~\ref{thm:main}.  We finish that section by showing that
the choice of inducing schemes and the uniformity properties of the
family $\F$ proved along the way allow us to use the results of
\cite{AV:2002} to obtain Theorem~\ref{thm:strong-stat-stab}.

We emphasise that the main new step in this paper is to use the fact,
proved in \cite{BTeqnat}, that the invariant measures  on our
inducing schemes are Gibbs states.  This allows us to pass
information from the limiting inducing scheme to other nearby
inducing schemes.  In this way we can avoid the techniques of
\cite{AV:2002} which used convergence in the sense of \eqref{eq:cty}.
Those techniques can not be applied directly in this setting since,
unless $\phi=-\log|Df|$, we are not considering acips, and thus
Lebesgue measure has no relevance.

In this paper we write $x=B^\pm y$ to mean $\frac 1B \le \frac xy\le
B$.
For an interval $J$ and a sequence of intervals
$(J_n)_n$, we write $J_n\to J$ as $n\to \infty$ if the convergence
is in the Hausdorff metric.

\emph{Acknowledgements:} We would like to thank Paulo Varandas for
helpful remarks, and Henk Bruin and Neil Dobbs for their comments on
an early version of this paper.  We would also like to thank
anonymous referees for very useful comments.

\section{Choice of inducing schemes}
\label{sec:Hofbauer}

Given $f\in \F$, we say that $(X,F,\tau)$ is an \emph{inducing scheme} for $(I,f)$ if

\begin{list}{$\bullet$}{\itemsep 0.2mm \topsep 0.2mm \itemindent -0mm}
\item $X$ is an interval containing a finite or countable
collection of disjoint intervals $X_i$ \st $F$ maps each $X_i$
diffeomorphically onto $X$, with bounded distortion (i.e. there
exists $K>0$ so that for all $i$ and $x,y\in X_i$, $1/K\le DF(x)/DF(y) \le K$);
\item $\tau|_{X_i} = \tau_i$ for some $\tau_i \in \N$ and $F|_{X_i} = f^{\tau_i}$.
\end{list}
The function $\tau:\cup_i X_i \to \N$ is called the {\em inducing time}. It may
happen that $\tau(x)$ is the first return time of $x$ to $X$, but
that is certainly not the general case.  For ease of notation, we will usually write $(X,F)=(X,F,\tau)$.

Given an inducing scheme $(X,F, \tau)$, we say that a measure $\mu_F$ is a \emph{lift} of $\mu$ if for all $\mu$-measurable subsets $A\subset I$,
\begin{equation} \mu(A) = \frac1{\int_X \tau \ d\mu_F} \sum_i \sum_{k = 0}^{\tau_i-1} \mu_F( X_i \cap f^{-k}(A)). \label{eq:lift}
\end{equation}
Conversely, given a measure $\mu_F$ for $(X,F)$, we say that
$\mu_F$ \emph{projects} to $\mu$ if \eqref{eq:lift} holds.
We call a measure
$\mu$  \emph{compatible to} the inducing scheme $(X,F,\tau)$ if

\begin{list}{$\bullet$}{\itemsep 1.0mm \topsep 0.0mm}\setlength{\itemindent=-7mm}
\item $\mu(X)> 0$ and $\mu\left(X \setminus \left\{x\in X:\tau(F^k(x)) \text{ is defined for all }k\ge 0\right\}\right) = 0$; and
\item there exists a measure $\mu_F$ which projects to $\mu$ by
\eqref{eq:lift}; in particular $\int_X \tau \ d\mu_F <
\infty$.
\end{list}

For the remainder of this paper we will denote the fixed map $f$ in Theorems~\ref{thm:main} and \ref{thm:strong-stat-stab} by $f_0$ and take a sequence $(f_n)_n$ \st
$f_n\in\F$ for all $n$ and $\|f_n-f_0\|_{C^2} \to 0$.

The main purpose this section we use the theory of \emph{Hofbauer towers} developed
by Hofbauer and Keller \cite{Htop,HKerg,Kellift} to produce inducing
schemes as described in \cite{BrCMP}.  We also show how the inducing
schemes move with $n$. Note that we could also have used other
methods to make these inducing schemes, see \cite{BLS} for example.

We let $\Q_{n,k}$ be the natural partition into maximal closed
intervals  on which $f_n^k$ is homeomorphic.  We will denote members
of $\Q_{n,k}$, which we refer to as \emph{$k$-cylinders} by
$\x_{n,k}$.  Note that for $\x_{n,k},\x_{n,k}'\in \Q_{n,k}$ with
$\x_{n,k}\neq\x_{n,k}'$ then $\x_{n,k}\cap\x_{n,k}'$ consists of at
most one point.  For any $x \notin \cup_{j= 0}^kf_n^{-j}(\crit_n)$
there is a unique cylinder $\x_{n,j}$ containing $x$ for $0\le j\le
k$.  We denote this cylinder by $\x_{n,j}[x]$.

We next define the Hofbauer tower.  We let  $$\hat
I_n:=\bigsqcup_{k\ge 0}\hspace{2mm}\bigsqcup_{\x_{n,k}\in \Q_{n,k}}
f_n^k(\x_{n,k})/\sim$$ where $f_n^k(\x_{n,k})\sim
f_n^{k'}(\x_{n,k'})$ if $f_n^k(\x_{n,k})= f_n^{k'}(\x_{n,k'})$.  Let
$\D_n$ be the collection of domains of $\hat I_n$ and $\pi_n:\hat I_n
\to I$ be the inclusion map.  A point $\hat x\in \hat I_n$ can be
represented by $(x,D)$ where $\hat x\in D$ for $D\in \D_n$ and
$x=\pi_n(\hat x)$.  In this case we can also write $D=D_{\hat x}$.

The map $\hat f_n:\hat I \to \hat I$ is defined as
$$\hat f_n(\hat x) = \hat f_n(x,D) = (f_n(x), D')$$
if there are cylinder sets $\x_{n,k} \supset \x_{n,k+1}$ \st $x \in
f_n^k(\x_{n,k+1}) \subset f_n^k(\x_{k,n}) = D$ and $D' = f_n^{k+1}
(\x_{n,k+1})$. In this case, we write $D \to D'$, giving $(\D_n,
\to)$ the structure of a directed graph. We let $D_{n,0}$ denote the
copy of $X_{n,0}=I$ in $\hat I_n$.  For each $R \in \N$, let $\hat
I_n^R$ be the compact part of the Hofbauer tower defined by
$$
\hat I_n^R = \sqcup \{ D \in \D_{n} : \mbox{ there exists a path }
D_{n,0}  \to \dots \to D \mbox{ of length } r \le R \}$$ The map
$\pi_n$ acts as a semiconjugacy between $\hat f_n$ and $f_n$:
$\pi_n\circ \hat f_n=f_n\circ \pi_n$.

A subgraph $(\E, \to)$ of $(\D, \to)$ is called \emph{closed} if $D \in \E$ and $D \to D'$ for some $D'\in \D$ implies that $D' \in \E$. It is \emph{primitive} if for every
pair $D, D' \in \E$, there is a path from $D$ to $D'$ within $\E$.
Clearly any two distinct maximal primitive subgraphs are disjoint. We
define $\D_{n,\T}$ to be the maximal primitive subgraph in $(\D_n, \to)$. We let $\hat I_{n,\T}$ be the union of all of these domains.
This is the \emph{transitive component of $\hat I_n$}.  Since $f\in \F$
is transitive, there is a point $\hat x\in \hat I_{n,\T}$ so that
$\overline{\bigcup_k\hat f_n^k(\hat x)} = \hat I_{n,\T}$.  The
existence and uniqueness of this (maximal) component is implicit in
works of Hofbauer and Raith \cite{HofRai}, see also \cite[Lemma
1]{BTeqnat} for a self contained proof and references for the
existence and uniqueness of this component and the existence of
points with dense orbit.

We next explain how Hofbauer towers can be used to define inducing
schemes.  We use these schemes, rather than, for example, the very
slightly different schemes in \cite{BLS} since these were the schemes
used in \cite{BTeqnat} and so we have good statistical information on
them.  This method of producing inducing schemes was first used in
\cite{BrCMP}.

For an interval $A=(a,a+\gamma)\subset I$ and
$\delta>0$, we let $(1+\delta)A$ denote the interval
$(a-\delta\gamma, a+\gamma+\delta\gamma)\cap I$. Fixing $\delta>0$
once and for all, for $A\subset I$ we let $A'=(1+\delta)A$ and
define
\begin{equation}\check A= \check
A_n(\delta):=\sqcup\{D\cap\pi_n^{-1}(A):D\in \D_n,
\pi_n(D) \supset A'\}.\label{eq:hats}\end{equation}

Following the method of \cite[Section 3]{BrCMP}, we pick some
cylinder $\x_{n,k}\in \Q_{n,k}$ and consider the first return map
$F_{\check \x_{n,k}}:\bigcup_j\hat\r^j \to \check \x_{n,k}$  where
$\check \x_{n,k}$ is derived as in \eqref{eq:hats} and $F_{\check
\x_{n,k}}=\hat f^{r_{\check\x_{n,k}}}$ for the return time $r_{\check
\x_{n,k}}$ which is constant on each of the first return domains $\hat\r^j$. The fact, explained above, that each $\hat f_n$ is topologically transitive on $\hat I_{n,\T}$ implies that $\overline{\bigcup_j\hat\r^j}=\check \x_{n,k}$.
We next can define an inducing scheme $F_{\x_{n,k}}:\bigcup \r^j \to
\x_{n,k}$ with inducing time  $\tau_{\x_{n,k}}(x)= r_{\check
\x_{n,k}}(\hat x)$ for some $\hat x\in  \check \x_{n,k}$ such that
$\pi_n(\hat x)=x$. As shown in \cite[Section 3]{BrCMP}, this number
is the same for any such choice of $\hat x$.  Here, after possibly
relabelling, each $\hat \r^{j}$ is $\pi^{-1}(\r^j)\cap
\check\x_{n,k}$.  Moreover,  $\tau_{\x_{n,k}}$ is constant
$\tau_{\x_{n,k}}^j$ on each $\r^j$.   Let $(\x_{n,k})^\infty$ denote
the set of points for which
$\tau_{\x_{n,k}}(F_{\x_{n,k}^i}^j(x))<\infty$ for all
$j=0,1,\ldots$.

The main result of this section is the following proposition.

\begin{proposition}
Let $\F$ be a fixed family $\F=\F_e(r,\ell,C, \alpha)$ or $\F=\F_p(r,
\ell, C, \beta)$ satisfying  \eqref{eq:CE} or \eqref{eq:poly}
respectively.  We let $(f_n)_n$ be any sequence \st $f_n\in\F$ for all $n$, and $x$ be any point in $\Omega\sm\left(\cup_{j\in \Z}f_0^j(\crit_0)\right)$.  For any $k$, if $n$ is sufficiently large then there exists a
sequence of inducing schemes $(\x_{n,k}[x], F_{\x_{n,k}[x]})$ as
defined above such that $\x_{n,k}[x]\to\x_{0,k}[x]$ in the Hausdorff
metric, and for any $y$ in the interior of any $\r^j$,
$F_{\x_{n,k}[x]}(y) \to F_{\x_{0,k}[x]}(y)$ as $n\to \infty$.
\label{prop:convergent ind}
\end{proposition}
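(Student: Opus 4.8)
The plan is to reduce everything to a finite, localised computation. Once $x$ is fixed the scheme $(\x_{n,k}[x],F_{\x_{n,k}[x]})$ is determined by three ingredients --- the base cylinder $\x_{n,k}[x]$, the (fixed) enlargement constant $\delta$, and the first return construction of \cite{BrCMP} in the Hofbauer tower $\hat I_n$ --- so I would organise the proof into: (i) showing $\x_{n,k}[x]\to\x_{0,k}[x]$ in the Hausdorff metric (which in passing shows the $f_n$-scheme is defined for $n$ large); (ii) showing that, truncated at any fixed level $R$, the domains of $\hat I_n$ together with the edges and identifications of $(\D_n,\to)$, and hence the finite part of the transitive component $\D_{n,\T}$, converge to those of $\hat I_0$; and (iii) deducing that the first return time to $\check\x_{n,k}$, and therefore the induced map, is stable at every point $y$ interior to a first return domain $\r^j$ of the limit scheme. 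Since all relevant objects are finite families, each ``convergence'' is elementary once set up.

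For (i) I would use that critical points of maps in $\H$ are non-flat and that each $f_n$ has exactly $r$ of them, so $\crit_n\to\crit_0$; the endpoints of $\x_{0,k}[x]$ are points $p$ with $f_0^j(p)\in\crit_0$ for some minimal $j<k$ (or boundary points of $\Omega$, which likewise move continuously), and minimality forces $Df_0^j(p)\neq0$, so the implicit function theorem produces a nearby solution $p_n$ of $f_n^j(\cdot)=c_n$ with $p_n\to p$. To control the other direction, fix $\eps>0$ with $\overline{B(x,\eps)}\subset\x_{0,k}[x]$; since $x\notin\bigcup_{i<k}f_0^{-i}(\crit_0)$, no $f_n^i$-preimage of $\crit_n$ with $i<k$ can lie in $B(x,\eps)$ for $n$ large --- such a point would accumulate on $\bd\x_{0,k}[x]$. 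Hence the $f_n$-cylinder through $x$ contains $B(x,\eps)$ and has endpoints tending to those of $\x_{0,k}[x]$, i.e.\ $\x_{n,k}[x]\to\x_{0,k}[x]$.

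For (ii) and (iii) I would first note that $f_n^r\to f_0^r$ uniformly on $I$, which combined with the convergence of all cylinders up to level $R$ (the argument of (i) again) gives $f_n^r(\x_{n,r})\to f_0^r(\x_{0,r})$ for every cylinder; hence the domains of $\hat I_n^R$ converge and the edges and identifications of the finite part of $(\D_n,\to)$ are locally stable. Then, fixing $y$ interior to $\r^j$ with $N:=\tau_{\x_{0,k}[x]}^j$, so $F_{\x_{0,k}[x]}(y)=f_0^N(y)$: since $F_{\x_{0,k}[x]}|_{\r^j}$ is a diffeomorphism onto $\x_{0,k}[x]$, $Df_0^N$ is nonzero on $\r^j$, so the orbit $y,f_0(y),\dots,f_0^{N-1}(y)$ avoids $\crit_0$, and the iterates $\hat f_n^i$ ($0\le i\le N$) converge to $\hat f_0^i$ near the (correspondingly placed) lift $\hat y$ of $y$. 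The statement ``the orbit of $\hat y$ first returns to $\check\x_{0,k}$ at time $N$'' unwinds into finitely many conditions --- for $0<i<N$ either $f_0^i(y)\notin\x_{0,k}[x]$ or $f_0^i(y)\in\x_{0,k}[x]$ but $\pi_0(D_{\hat f_0^i(\hat y)})\not\supset\x_{0,k}[x]'$, and $f_0^N(y)\in\x_{0,k}[x]$ with $\pi_0(D_{\hat f_0^N(\hat y)})\supset\x_{0,k}[x]'$ --- each of which holds with a definite margin because $y$ is interior to $\r^j$ and (for the fixed $\delta$) no domain endpoint in the relevant finite part lies on $\bd\x_{0,k}[x]'$; so they persist for $n$ large. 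Therefore $y$ lies in a first return domain of the $f_n$-scheme with the same return time $N$, and $F_{\x_{n,k}[x]}(y)=f_n^N(y)\to f_0^N(y)=F_{\x_{0,k}[x]}(y)$.

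I expect the main obstacle to be the uniform control of the Hofbauer combinatorics in (ii)/(iii): an edge or identification of $(\D_n,\to)$, and with it the precise shape of $\check\x_{n,k}$, could in principle be created or destroyed at a critical-orbit coincidence not forced by the standing hypotheses and not stable under $C^2$-perturbation. My way around this is exactly the localisation above --- only the length-$N$ orbit of the single lifted point $\hat y$ matters, it visits finitely many domains at bounded level, and every inclusion it needs holds with room --- while the residual coincidences on $\bd\x_{0,k}[x]'$ are excluded for a generic choice of the otherwise free constant $\delta$ (a choice that changes neither the usefulness of the schemes nor the equilibrium states they project to). Assembling (i) with this gives the proposition.
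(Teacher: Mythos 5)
Your plan correctly captures the \emph{combinatorial} half of the argument (what the paper proves as Claim~\ref{claim:Hof conv}): using the no-coincidence hypothesis on critical orbits, the finite sets $\cup_{j\le R}f_n^{-j}(\crit_n)$ converge to $\cup_{j\le R}f_0^{-j}(\crit_0)$ together with their order and $f_n$-dynamics, so cylinders of bounded rank, the truncated towers $\hat I_n^R$, and the edges and identifications of the corresponding finite part of $(\D_n,\to)$ all converge; your localisation to the length-$N$ orbit of a single lift $\hat y$ at bounded level then gives the pointwise convergence of the first-return time with an open margin. That much is sound and is essentially what the paper does for that part.

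The genuine gap is the \emph{transitive component}. The construction of the inducing scheme requires the relevant pieces of $\check\x_{n,k}[x]$ to sit inside $\hat I_{n,\T}$: only there is $\hat f_n$ transitive, which is what makes the first-return domains dense in $\check\x_{n,k}$ and yields a full-branch Markov inducing scheme compatible with the equilibrium state. Your finite-truncation convergence shows that a given domain $D_0$ of $\D_0$ has a combinatorial counterpart $D_n$ in $\D_n$ and that the finitely many edges around it match; it does \emph{not} show that $D_n$ belongs to the maximal primitive subgraph $\D_{n,\T}$. Membership in $\D_{n,\T}$ is a global, infinitary property of the directed graph $(\D_n,\to)$ — it depends on the existence of paths to and from every other domain of $\D_{n,\T}$ — and it cannot be read off from any bounded truncation or salvaged by a generic choice of $\delta$. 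Concretely, a perturbation could a priori make the counterpart $D_n$ of a transitive domain become transient without changing any finite piece of the graph, in which case $\check\x_{n,k}\cap\hat I_{n,\T}$ could be empty or the first-return structure could fail to be full-branch. The paper flags exactly this: ``Without our assumptions on the uniform growth of all $f_n$, this may not be the case.'' The hard part of Lemma~\ref{lem:trans conv} is Claim~\ref{claim:Dstar}, which produces a single domain $D_0^*$ whose counterparts $D_n^*$ are in $\D_{n,\T}$ for all large $n$; the proof uses the inducing schemes of \cite{BLS}, the acips $\nu_n$, the uniform tail estimates of Lemma~\ref{lem:uniform decay} (this is where the growth hypotheses \eqref{eq:CE}/\eqref{eq:poly} enter), the ergodic theorem applied to frequencies of ``good entries'', and the lifting of measures to the Hofbauer tower in the sense of Keller. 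None of that appears in your proposal, and without some argument of this type the proposition is not established.
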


Note that the inducing schemes depend on $\delta>0$, but any choice
will work for all $n$.

Our inducing schemes are created as first return maps to sets $\check
\x_{n,k}$ as defined above.  Since the structure of these sets is
determined by the structure of the Hofbauer tower, in order to prove
the proposition we first have to show that the respective Hofbauer
towers converge. Moreover, since the domains $\check X_{n,k}$ must be
chosen inside $\hat I_{n,\T}$, we need to show that the sets $\hat I_{n,\T}$ converge.  Without our assumptions on the uniform growth of all $f_n$, this may not be the case.

\begin{lemma}
Let $\F$ be a fixed family $\F=\F_e(r,\ell,C, \alpha)$ or $\F=\F_p(r,
\ell, C, \beta)$ satisfying  \eqref{eq:CE} or \eqref{eq:poly}
respectively. Then $\hat I_{n,\T} \to \hat I_{0,\T}$ in the sense
that for any $R\in \N$, $\hat I_{n,\T}^R \to \hat I_{0,\T}^R$ in the
Hausdorff metric. \label{lem:trans conv}
\end{lemma}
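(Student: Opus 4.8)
\textbf{Proof proposal for Lemma~\ref{lem:trans conv}.}

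The plan is to compare the Hofbauer towers $\hat I_n$ and $\hat I_0$ level by level, and then locate the transitive components inside them. First I would observe that for a fixed number of steps $k$, the combinatorics of the partitions $\Q_{n,k}$ stabilise: since $\|f_n-f_0\|_{C^2}\to 0$, the critical points $\crit_n$ converge to $\crit_0$ (using non-flatness and the fact, assumed in the setup, that distinct critical orbits stay disjoint for $f_0$ so nothing degenerate happens for large $n$), and hence the endpoints of the $j$-cylinders $\x_{n,j}$ converge to those of $\x_{0,j}$ in the Hausdorff metric for each $j\le k$. Consequently the images $f_n^j(\x_{n,j})$ converge to $f_0^j(\x_{0,j})$, so the identifications defining $\sim$ on $\hat I_n$ for words of length $\le R$ eventually match those on $\hat I_0$: the directed graph of domains reachable from $D_{n,0}$ by a path of length $\le R$ is, for all large $n$, combinatorially isomorphic to the corresponding graph for $f_0$, and under this isomorphism each domain $D\in\D_n$ of level $\le R$ is an interval converging in the Hausdorff metric to the corresponding domain of $\D_0$. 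This gives $\hat I_n^R\to\hat I_0^R$; the content of the lemma is to upgrade this to the \emph{transitive} parts $\hat I_{n,\T}^R$.

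Next I would pin down $\D_{n,\T}$ inside $\D_n$. The point is that membership of a domain $D$ in the maximal primitive subgraph is a property that, a priori, depends on arbitrarily long paths, so the level-$R$ convergence above is not by itself enough. Here is where the uniform growth hypothesis \eqref{eq:CE} or \eqref{eq:poly} across the whole family $\F$ enters. Using the transitivity of $f_0$ on $\Omega$ together with the uniform Collet--Eckmann or polynomial growth bound, there is a level $R_0$ and a domain $D^\ast_0\in\D_{0,\T}$ at level $\le R_0$ which returns to itself, i.e.\ lies on a cycle within $\D_{0,\T}$; moreover $\hat I_{0,\T}^{R_0}$ already contains a base domain from which the whole transitive component grows. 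I would argue, via \cite[Lemma~1]{BTeqnat} and \cite{HofRai}, that the transitive component is characterised by: (i) being reachable from a fixed low-level ``root'' domain, and (ii) being able to return to that root. Both (i) and (ii) are witnessed by paths whose length can be bounded \emph{uniformly in $n$}, precisely because the growth constants $C,\alpha$ (resp.\ $C,\beta$) are the same for every $f_n\in\F$: the standard estimates (e.g.\ as in \cite{BrCMP,BLS}) controlling how quickly cylinders grow to a definite scale, and hence how quickly a domain's image covers a fixed reference cylinder, depend only on these constants and on $\ell,r$. So for each target level $R$ there is an $R'=R'(R)\ge R$ such that a level-$\le R$ domain $D$ lies in $\D_{n,\T}$ if and only if it lies on a closed path of length $\le R'$ through the root domain $D_{n,0}$-region — a finite, level-$\le R'$ condition that is the same combinatorial condition for $f_0$ and for all large $n$ by the first paragraph.

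Finally I would assemble these pieces: fix $R$, choose $R'=R'(R)$ as above, and choose $n$ large enough that the level-$\le R'$ portions of the directed graphs $(\D_n,\to)$ and $(\D_0,\to)$ are combinatorially isomorphic with corresponding domains Hausdorff-close. Then the level-$\le R$ domains selected into $\D_{n,\T}$ by the finite criterion correspond exactly, under the isomorphism, to those selected into $\D_{0,\T}$, and each such domain is an interval lying within $\eps$ (Hausdorff) of its $f_0$-counterpart once $n$ is large. Taking the union over these domains gives $\hat I_{n,\T}^R\to\hat I_{0,\T}^R$, which is the assertion. The main obstacle, and the step that genuinely uses the hypotheses rather than just $C^2$-closeness, is the uniform bound $R'(R)$ on the length of the return/access paths witnessing membership in the transitive component: without uniform growth along \emph{all} critical orbits in the family, the transitive component of $\hat I_n$ could shrink or relocate in the limit (as flagged in the remark preceding the lemma), so the finite-combinatorics reduction would fail. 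I would treat that uniform path-length bound carefully, deriving it from the family-wide constants in \eqref{eq:CE}/\eqref{eq:poly} via the cylinder-growth estimates of \cite{BrCMP} (or \cite{BLS}), and treat the level-$\le R'$ Hausdorff convergence of domains as the routine consequence of $C^2$-convergence and non-flatness of the critical points.
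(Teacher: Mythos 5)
Your first paragraph recovers the paper's Claim~1 inside the proof of Lemma~\ref{lem:trans conv} (the combinatorial stabilisation of the truncated Hofbauer towers $\hat I_n^R$), and the structure of the overall argument — reduce to finite-level combinatorics, then locate $\D_{n,\T}$ inside $\D_n$ — is also how the paper proceeds. Where your proposal diverges, and where the gap lies, is in the mechanism for pinning down the transitive component.

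You claim that membership of a level-$\le R$ domain in $\D_{n,\T}$ can be decided by a closed path of length $\le R'(R)$ through a root domain, with $R'$ uniform in $n$ thanks to the family-wide growth constants. This is precisely the step that needs a real argument, and the justification you sketch (derivative-growth estimates from \cite{BrCMP}/\cite{BLS} bounding access and return times in the tower graph) is not actually established by those references, nor is it how the paper proceeds. There are two concrete problems. First, $D_{n,0}$ is not generally in $\D_{n,\T}$, so the "return to the root" criterion needs a root \emph{inside} the transitive component; identifying such a root uniformly in $n$ is exactly the hard part, and you have assumed rather than proved it. Second, even granting a candidate root, bounding the return path length uniformly in $n$ does not follow directly from uniform derivative growth: the Hofbauer graph combinatorics encode cylinder identifications, and controlling return times in that graph is not the same as controlling derivative growth. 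The paper avoids this entirely by a measure-theoretic argument (its Claim~\ref{claim:Dstar}): it lifts the acip $\nu_n$ to $\hat m$ on the tower, uses the BLS inducing scheme and the uniform tail bound (Lemma~\ref{lem:uniform decay}) to show that a specific domain $D_n^*$ — the image $f_n^k(\x_{n,k})$ of a cylinder sitting compactly inside the inducing base $\Delta_n$ — receives a uniformly positive mass $\hat m(D_n^*)\ge\theta'$, and then concludes $D_n^*\in\D_{n,\T}$ because $\hat m$ charges only transitive components. Once one domain is so located, the closedness of the primitive subgraph propagates membership to other domains via the paths of \emph{fixed} length coming from $f_0$'s transitivity (so no uniform-in-$n$ path-length bound over all domains is ever needed). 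Your proposal is missing this measure-theoretic input, and without it the claimed uniform $R'(R)$ is an unproven assertion on which the whole argument rests.
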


The proof of this lemma relies on the properties of measures on the
Hofbauer tower, so before proving it, we show how to find representatives of these measures
on the towers.  Given $f\in \F$, we define $\iota:=\pi|_{D_0}^{-1}$ where $D_0$ is the lowest level in $\hat I$, so $\iota:I \to D_0$ is an inclusion map. Given a probability measure $m$,
let $\hat m^0 = m\circ\iota^{-1}$ be a probability measure on  $D_0$. Let
$$\hat m^k:=\frac1k\sum_{j=0}^{k-1}\hat m^0\circ \hat f^{-j}.$$
We let $\hat m$ be a vague limit of this sequence.  This is a
generalisation of weak$^*$ limit for non-compact sets: for details
see \cite{Kellift}.  In general it is important to ensure that $\hat
m \not\equiv 0$.  It was proved in \cite[Theorem 3.6]{BK} that if $m$
is an ergodic invariant measure with positive Lyapunov exponent then
$\hat m\circ\pi^{-1} = m$.  Note that for $f\in \H$, any $\hat
f$-invariant probability measure $\hat m$, we must have $\hat m(\hat
I_{\T})=1$.

\begin{proof}[Proof of Lemma~\ref{lem:trans conv}]
We first prove the following claim.

\begin{claim}
$\hat I_n \to \hat I_0$ in the sense that for $R\in \N$, $\hat I_n^R
\to \hat I_0^R$ in the Hausdorff metric. \label{claim:Hof conv}
\end{claim}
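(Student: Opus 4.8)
The plan is to argue level by level in the Hofbauer tower. Fix $R\in\N$. Recall that $\hat I_n^R$ is the union of those domains $D\in\D_n$ reachable from the base $D_{n,0}=I$ by a path of length at most $R$. Each such domain is of the form $f_n^k(\x_{n,k})$ for some $k\le R$ and some $k$-cylinder $\x_{n,k}\in\Q_{n,k}$. So it suffices to show: for each fixed $k\le R$, the finite collection of $k$-cylinders $\Q_{n,k}$ converges, cylinder by cylinder, to $\Q_{0,k}$ in the Hausdorff metric, and hence their images under $f_n^k$ converge to the images under $f_0^k$. Since $R$ is fixed and each $\Q_{n,k}$ for $k\le R$ is finite with cardinality bounded in terms of $r$ and $k$ only (the endpoints of $k$-cylinders are the points of $\cup_{j=0}^{k}f_n^{-j}(\crit_n)$ together with $\partial I$), passing to a subsequence we may assume the cardinalities stabilise.

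The key step is the convergence of the cylinder endpoints. The endpoints of members of $\Q_{n,k}$ are the preimages $f_n^{-j}(c)$ for $c\in\crit_n$ and $0\le j\le k$ (intersected with $I$), together with $\partial I$. First I would note that $\crit_n\to\crit_0$: since $f_n\to f_0$ in $C^2$ and all critical points are non-degenerate (non-flat, and by assumption no inflection points, so $D^2f_0\ne 0$ at each critical point), the implicit function theorem gives that each critical point of $f_0$ is the limit of exactly one critical point of $f_n$, and for $n$ large there are no others — this uses that $\crit_n$ has exactly $r$ points for every $n$. Then, because $f_n\to f_0$ uniformly together with derivatives, the finite forward orbits $f_n^j(c_n)$ converge to $f_0^j(c_0)$ for $j\le k$, and more importantly the relevant branches of $f_n^{-j}$ converge: away from critical points $f_n$ is a local diffeomorphism with derivative bounded away from $0$ uniformly in $n$ on the relevant compact sets (here one uses that $f_0$ has no critical point mapping onto a critical point, combined with $C^2$-closeness, to keep the inverse branches well-defined and equicontinuous). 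Inductively on $j$ this yields Hausdorff convergence of $\cup_{j=0}^{k}f_n^{-j}(\crit_n)$ to $\cup_{j=0}^{k}f_0^{-j}(\crit_0)$, hence $\Q_{n,k}\to\Q_{0,k}$ cylinder-by-cylinder, and applying the (uniformly convergent) maps $f_n^k$ gives $f_n^k(\x_{n,k})\to f_0^k(\x_{0,k})$. The identification $\sim$ collapsing coincident images is a combinatorial relation determined by finitely many equalities among these intervals, and for $n$ large it stabilises to that of $f_0$ (any two level-$k$ images that are unequal for $f_0$ stay unequal under small perturbation, by Hausdorff convergence with positive separation; equalities for $f_0$ force equalities for $f_n$ once the endpoint data match, which it does along the relevant subsequence). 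This gives $\hat I_n^R\to\hat I_0^R$.

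The main obstacle I anticipate is a \emph{subtle discontinuity in the combinatorics}: a $k$-cylinder of $f_0$ could in principle degenerate to a point in the limit (e.g. if two preimages of critical points collide only at $f_0$), which would mean $\Q_{0,k}$ has \emph{fewer} intervals than $\Q_{n,k}$, and then an extra domain of $\hat I_n^R$ has no limit in $\hat I_0^R$. I would rule this out using the standing hypotheses: non-renormalisability, the assumption $f_0^j(\crit_0)\cap f_0^{k}(\crit_0)\ne\es\Rightarrow j=k$, and the exclusion of critical points landing on periodic orbits, ensure the critical orbits are infinite and the points $f_0^{-j}(c_0)$ ($j\le k$, $c_0\in\crit_0$) are genuinely distinct, so each $k$-cylinder of $f_0$ has nonempty interior; then Hausdorff convergence of the endpoint sets forces the cardinalities to match for large $n$ and no domain is lost. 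Once the claim is in hand, the lemma follows: $\hat I_{n,\T}^R$ is the part of $\hat I_n^R$ lying in the maximal primitive subgraph, and primitivity/closedness is again a combinatorial condition on the finite directed graph of level-$\le R$ domains, which — by the convergence just established and the existence and uniqueness of the transitive component guaranteed for every $f\in\F$ — stabilises to that of $f_0$ for $n$ large, whence $\hat I_{n,\T}^R\to\hat I_{0,\T}^R$.
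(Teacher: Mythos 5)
Your argument follows essentially the same route as the paper's: both proofs fix $R$, observe that $\hat I_n^R$ is built from finitely many domains $f_n^k(\x_{n,k})$ with $k\le R$, show that the endpoint sets $\cup_{j\le k}f_n^{-j}(\crit_n)$ converge to $\cup_{j\le k}f_0^{-j}(\crit_0)$ (using the standing hypothesis that critical orbits do not intersect to keep the combinatorics rigid), conclude cylinder-by-cylinder Hausdorff convergence, and then argue that the identification $\sim$ stabilises for $n$ large. The one place where the paper is more careful than you is the last step: to see that an equality $f_0^k(\x_{0,k}^i)=f_0^{k'}(\x_{0,k'}^{i'})$ persists for $f_n$, the paper explicitly constructs an order-preserving bijection $h_{n,k}$ on $\cup_{j\le k}f_n^{-j}(\crit_n)\to\cup_{j\le k}f_0^{-j}(\crit_0)$ which conjugates $f_n^j$ to $f_0^j$ on these finite sets; it is this conjugacy that forces the endpoint coincidences for $f_n$, rather than mere Hausdorff proximity. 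Your phrase ``equalities for $f_0$ force equalities for $f_n$ once the endpoint data match'' is the conclusion one wants, but without the conjugacy $h_{n,k}$ it reads as circular --- two endpoints can be Hausdorff-close without being equal, so you should make the conjugacy explicit as the paper does. Your extra remarks (the implicit function theorem for $\crit_n\to\crit_0$, ruling out degenerate cylinders, and the observation that no subsequence is really needed once the cardinalities are pinned down) are correct and fill in detail the paper leaves implicit.
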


\begin{proof} The idea of this proof is that since
$\|f_n-f_0\|_{C^2}\to 0$, for any $k\in \N$ the set $\cup_{j=1}^kf_n^{-j}(\crit_n)$ is topologically the same as the set $\cup_{j=1}^kf_0^{-j}(\crit_0)$ for all $n$
large.  Observe that this is not necessarily true if we did not
assume that critical orbits do not intersect, since otherwise there
could be a point $x$ such that $f_0^j(x)\in \crit_0$ and
$f_0^{j'}(x)\in \crit_0$ for $j\neq j'$ and for each $n$ two points
$x_{n,1}\neq x_{n,2}$ such that $f_n^j(x_{n,1})\in \crit_n$ and
$f_n^{j'}(x_{n,2})\in \crit_n$ and $x_{n,1}, x_{n,2} \to x$ as $n\to
\infty$.

This means that for fixed $k$ and for all large $n$, we can define an
order preserving bijection on this set
$h_{n,k}:\cup_{j=0}^kf_n^{-j}(\crit_n) \to
\cup_{j=0}^kf_0^{-j}(\crit_0)$ so that for $x\in I$ such that
$f_n^k(x)\in \crit_n$, $f_0^k\circ h_{n,k}(x)\in \crit_0$ and for all
$0\le j\le k$ we have $h_{n,k}\circ f_n^j(x)= f_0^j\circ h_{n,k}(x)$.
Therefore, for any cylinder $\x_{0,k}^i\in \Q_{0,k}$ for large enough
$n$, there is a corresponding cylinder $\x_{n,k}^i\in \Q_{n,k}$ so
that $\x_{n,k}^i \to \x_{0,k}^i$.  The existence of $h_{n,k}$ also
implies that if, given $R\in \N$, for some $k, k'\le R$, we have
$f_0^k(\x_{0,k}^i)=f_0^{k'}(\x_{0,k'}^{i'})$ then for all large $n$,
for the corresponding cylinders $\x_{n,k}^i$ and $\x_{n,k'}^{i'}$ we
have $f_n^k(\x_{n,k}^i)=f_n^{k'}(\x_{n,k'}^{i'})$.  Hence for fixed
$R\in \N$, $\hat I_n^R$ and $\hat I_0^R$ are topologically the same
for all large $n$.  Since $\|f_n-f_0\|_{C^2}\to 0$ they also
converge in the Hausdorff metric, completing the claim.
\end{proof}

Recall that our assumptions on the transitivity of maps in $ \H$ and
\cite[Lemma 1]{BTeqnat} imply that there is a unique transitive
component in the Hofbauer tower. The following claim will allow us to
compare the transitive components of our Hofbauer towers.

\begin{claim}
There is a domain $D_0^*\in D_{0,\T}$ so that the  corresponding
domains $D_n^*$ are in $D_{n,\T}$. \label{claim:Dstar}\end{claim}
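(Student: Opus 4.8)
The plan is to produce the domain $D_0^*$ as a domain that is "seen" by a dense-orbit point in $\hat I_{0,\T}$ and is visited infinitely often, then track it along the approximating sequence using Claim~\ref{claim:Hof conv}. First I would recall that $\hat I_{0,\T}$ is the transitive component, so there is a point $\hat x_0 \in \hat I_{0,\T}$ whose forward $\hat f_0$-orbit is dense in $\hat I_{0,\T}$; moreover, since $\hat I_{0,\T}$ is the maximal primitive subgraph, every domain $D \in D_{0,\T}$ lies on a cycle within $D_{0,\T}$, so it is entered and left again by paths inside $D_{0,\T}$. Pick any such $D_0^*$ together with a finite path $D_{0,0} \to \cdots \to D_0^* \to \cdots \to D_{0,0}$ realising a return to the base level (this exists because $D_{0,0}$ itself lies in the transitive component, as it is the root of the graph and $\Omega$ is a cycle of intervals). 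Let $R$ be the length of this path; then $D_0^* \in \hat I_{0,\T}^R$ and in fact $D_0^*$ and the whole path sit inside $\hat I_{0}^R$.

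Next I would invoke Claim~\ref{claim:Hof conv}: for all large $n$, $\hat I_n^R$ is \emph{topologically the same} as $\hat I_0^R$ via the correspondence of cylinders coming from the order-preserving bijections $h_{n,k}$, $k \le R$. Under this correspondence the path $D_{0,0} \to \cdots \to D_0^* \to \cdots \to D_{0,0}$ is carried to a path $D_{n,0} \to \cdots \to D_n^* \to \cdots \to D_{n,0}$ in $(\D_n, \to)$ of the same combinatorial type, where $D_n^*$ is the domain corresponding to $D_0^*$. In particular $D_n^*$ lies on a closed path through the root $D_{n,0}$. Since the root $D_{n,0}$ always lies in the transitive component (again because $\Omega$ is a cycle of intervals, so $\hat f_n$ restricted to the full Hofbauer tower is, on the relevant component, transitive and the base domain is recurrent), and since $D_n^*$ is accessible from $D_{n,0}$ and $D_{n,0}$ is accessible from $D_n^*$ along this path, $D_n^*$ belongs to a primitive subgraph containing $D_{n,0}$; by maximality and uniqueness of the transitive component this forces $D_n^* \in D_{n,\T}$.

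The main obstacle I anticipate is justifying that the \emph{base domain} $D_{n,0}$ genuinely lies in the transitive component $D_{n,\T}$ for every $n$ — this is what makes the "closed path through the root" argument close up into a primitive subgraph. For a cycle of intervals this should follow from the same results of Hofbauer--Raith and \cite[Lemma~1]{BTeqnat} that give existence and uniqueness of the transitive component; I would cite those, noting that for a transitive cycle of intervals $\Omega$ the image domains over cylinders near $\Omega$ accumulate on the base and the base maps (through the graph) back into any prescribed domain of $D_{n,\T}$. A secondary, more technical point is that Claim~\ref{claim:Hof conv} only gives topological identification of the \emph{truncated} towers $\hat I_n^R$; I must make sure that membership in $D_{n,\T}$ can be certified using only a bounded-length path (of length $\le R$), which is exactly why I chose $D_0^*$ together with an explicit finite return path at the outset rather than appealing to the infinite transitive structure directly.
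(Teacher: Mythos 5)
The paper's proof of Claim~\ref{claim:Dstar} is measure-theoretic: it shows that Lebesgue-a.e.\ point makes ``good entries'' to a fixed cylinder $\x_{n,k}$ with frequency uniformly bounded below (via the inducing schemes of \cite{BLS} and Lemma~\ref{lem:uniform decay}), lifts this to a lower bound $\hat m(D_n^*)\ge\theta'>0$ for the measure $\hat m$ on the Hofbauer tower, and then invokes the fact that any $\hat f_n$-invariant probability measure is carried by $\hat I_{n,\T}$. Your proposal replaces this with a purely graph-theoretic transport argument. That is a genuinely different route, and it would be more elementary if it worked --- but it has a gap that I don't think can be patched.

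The gap is the assertion, on which the whole argument pivots, that the base domain $D_{n,0}$ (the copy of $I$ at the bottom of the tower) lies in the transitive component $D_{n,\T}$. You flag this yourself as ``the main obstacle'' and then dismiss it, but it is genuinely false in general. An arrow $D\to D_{n,0}$ in the Hofbauer graph requires a cylinder $\x_{n,k+1}$ with $f_n^{k+1}(\x_{n,k+1})=I$, i.e.\ a monotone branch of $f_n^{k+1}$ that is surjective onto the whole interval. For a generic multimodal map satisfying \eqref{eq:CE} or \eqref{eq:poly} with infinite critical orbits, no branch of any iterate is onto $I$; consequently there are no arrows into $D_{n,0}$, so $D_{n,0}$ sits on no cycle and is not in any primitive subgraph, let alone the maximal one. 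Without a domain that you can \emph{a priori} certify to be in $D_{n,\T}$ for all $n$ simultaneously, transporting a return path from the $n=0$ tower does not establish that $D_n^*$ lands in the transitive component: the transported cycle only puts $D_n^*$ in \emph{some} strongly connected subgraph, and identifying that subgraph as $D_{n,\T}$ is precisely what must be shown. The paper's route avoids this circularity by using measure rather than combinatorics: a positive lower bound on $\hat m(D_n^*)$, uniform in $n$, forces $D_n^*\in\hat I_{n,\T}$ because invariant measures charge only the transitive part of the tower. Some replacement of this quantitative input --- a mechanism identifying a domain in $D_{n,\T}$ uniformly in $n$ without already knowing the claim --- would be needed before your combinatorial strategy could close.
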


Assuming this claim, we use the fact that $(\D_{n,\T}, \to)$ is a
closed subgraph, i.e. if $D\in \D_{n,\T}$ and there exists a path
$D\to \cdots \to D'$ then $D'\in \D_{n,\T}$.  Let $D_0$ be an
arbitrary domain in $\D_{0,\T}$.  By Claim~\ref{claim:Hof conv}, for
large enough $n$, there exists a corresponding domain $D_n\in \D_n$.
Since there must exist $\hat x_0\in D_0^*$ with dense orbit in $\hat
I_{0,\T}$, this point iterates into $D_0$ and back into $D_0^*$.
Therefore,  for large enough $n$ there is a point $\hat x_n\in D_n^*$
which iterates into the corresponding domain $D_n\in \D_n$ and back
out to $D_n^*$.  Since $D_n^*\in \D_{n,\T}$, we must also have
$D_n\in \D_{n,\T}$.  Therefore, once the claim is proved, so is the
lemma.

\begin{proof}[Proof of Claim~\ref{claim:Dstar}]
For $f\in \F$, and an open interval $U\subset I$, we say that $x$
makes a \emph{good entry to $U$ at time $k$} if there exists an
interval $U'\ni x$ so that $f^k:U'\to U$ is a homeomorphism.
We first show that for the inducing domains $\Delta_n$ constructed as
in \cite{BLS}, there exists $\theta>0$ such that $m$-a.e. $x$ makes a
good entry to $\Delta_n$ under iteration by $f_n$ with frequency at
least $\theta$.  Here $\theta>0$ is uniform in $n$ and
the measure $m$ is Lebesgue.  In fact we are really interested in good entries to a subset of $\Delta_n$, but we do this first since it is simpler and introduces the ideas.

We let $\nu_n$ denote the acip for $(I, f_n)$.  In \cite{BLS} inducing schemes
$G_n:\bigcup \Delta_n^i\to \Delta_n$ are constructed for some
$\Delta_n$.  Here $G_n=f^{r_n}$ for an inducing time $r_n$.  We can
take $\eta_n=\frac{|\Delta_n|}2$.  This is uniformly bounded below,
by some $\eta>0$. To check this fact we refer to \cite[Lemma
4.2]{BLS} where the sets $\Delta_n$ are constructed. Then observe
that once a map $f_0$ is fixed, the construction of the corresponding
$\Delta_0$ involves a finite number of iterations and constants that
can be taken uniformly within a neighbourhood of $f_0$. This means
that one can mimic the construction for a neighbouring map $f_n$ and
hence obtain an interval uniformly close to the original $\Delta_0$.

By the Ergodic Theorem, the
 frequency
$$\lim_{k\to \infty}\frac1k\#\left\{0\le j<k:\exists U\ni x \hbox{
s.t. }
 f^j:U\to \Delta_n \hbox{ is a diffeomorphism}\right\}$$
for a $\nu_n$-typical point $x$ is bounded below by 
$1/\int r_n~d\nu_{G_n}$ where $\nu_{G_n}$ is the measure for the inducing
scheme as in \eqref{eq:lift}.  Since $\nu_n$ is equivalent to Lebesgue, we need only to show
that $\int r_n~d\nu_{G_n}$ is uniformly bounded above for all $f_n\in
\F$, with $n$ sufficiently large.  This fact follows from
Lemma~\ref{lem:uniform decay} later in the paper.

As above, $\Delta_n\to \Delta_0$ in the Hausdorff metric.  This means
that there exists $k$ and cylinders $\x_{n,k}\in \Q_{n,k}$ so that
$\x_{n,k} \to \x_{0,k}$ and for all large $n$, $\x_{n,k}\Subset
\Delta_n$.  We set $D_n^*= f_n^k(\x_{n,k})$.
A similar argument to the one above shows that $m$-a.e. $x$ makes a good entry to $\x_{n,k}$ under iteration by $f_n$ with frequency bounded below by $1/\int_{\x_{n,k}} r_n~d\nu_{G_n}$.  Since $\x_{n,k}$ converge to some $\x_{0,k}$ as in Claim 1, again Lemma~\ref{lem:uniform decay} implies that for all large $n$, $m$-a.e. $x$ makes a good entry to $\x_{n,k}$ under iteration by $f_n$ with frequency bounded below by $\theta'$ where  $\theta'$ is any value in $(0,1/\int_{\x_{0,k}} r_0~d\nu_{G_0})$.

Given $f\in \H$,  the way the Hofbauer tower is constructed using the
cylinder structure means that if $U\subset I$ is an interval such
that $f^k:U \to f^k(U)$ is a homeomorphism, then for $\hat
U:=\iota(U)\subset D_0$, every iterate $\hat f^j(\hat U)$ is
contained in a unique element of $\D$ for $0\le j\le k$.  Moreover $\pi(\hat f^k(\hat U))=f^k(U)$. Therefore,  if $x$ makes a good entry to $\x_{n,k}$ at
time $j$ under iteration by $f_n$, then there exists an interval
$B$ in a unique domain of $\D_n$ so that $\pi_n(B)=\x_{n,k}$ and for $\hat
x:=\iota_n(x)$, $\hat f_n^j(\hat x)\in B$.   For any such a domain $B$, we must have $\hat f_n^k(B) = D_n^*$, and hence $\hat f_n^{j+k}(\hat x)\in D_n^*$.

Fix $\eps>0$.  The above argument means that the frequency of entries
of a point $\iota_n(x)$ to $D_n^*$ under iteration by $\hat f$ can be
estimated in terms of good entries of $x$ to $\x_{n,k}$ under iteration by $f_n$.  Hence there
exists $k_0=k_0(n,x,\eps)\in \N$ so that
 $k\ge k_0$ implies
$$\frac1k\#\left\{0\le j<k:\hat f^j(\hat x)\in \hat D_n^*\right\}>
\frac{\theta'}{1+\eps}.$$ Let $N$ be so large that $m\{x\in
I:k_0(n,x,\eps)\le N\}\ge 1-\eps$.
 Then
$$\hat m^k(D_n^*)= \frac1k\sum_{j=0}^{k-1}\hat m^0\circ
\hat f^{-j}(D_n^*) \ge \theta' \left(\frac{1-\eps}{1+\eps}\right)$$
for all $k\ge N$.  Since $\eps>0$ was arbitrary, we have $\hat
m(D_n^*)\ge \theta'$ for all large $n$.  Since $\hat m$ can only give
mass to domains in transitive components, this implies that
$D_n^*\subset \hat I_{n,\T}$ for all large $n$.
\end{proof}
\end{proof}

\begin{proof}[Proof of Proposition~\ref{prop:convergent ind}.]
Lemma 1 of \cite{BTeqnat} implies that for any point $x\in \Omega\sm
\left(\cup_{k\in \Z}f_0^k(\crit_0)\right)$ there exists $\hat x\in
\hat I_{0,\T}$ so that $\pi_0(\hat x)=x$.  Hence for large enough
$k$, the cylinder $\x_{0,k}[x]$ will give rise to a set $\check
\x_{0,k}[x]\subset \hat I_{0,\T}$ as in our construction which is
non-empty.  By Lemma~\ref{lem:trans conv}, this will also be true of
the corresponding cylinder for $f_n$ for all $n$ large enough.
Moreover, that lemma implies that for any $R\in \N$,  $\check
\x_{n,k}[x]\cap \hat I_n^R \to \check \x_{0,k}[x]\cap \hat I_0^R$ as
$n\to \infty$.  Hence, the first return map by $\hat f_n$ to $\check
\x_{n,k}[x]$ converges pointwise to that of $\hat f_0$ to $\check
\x_{0,k}[x]$. Therefore for any $y\in \cup_j \stackrel{\circ}{R^j}$,
$F_{\x_{n,k}[x]}(y) \to F_{\x_{0,k}[x]}(y)$ as $n\to \infty$, as
required.
\end{proof}

\section{Equilibrium states for the induced maps}
\label{sec:press}

For a dynamical system $T:X\to X$ on a topological space and $\Phi:X
\to \R$, we say that a measure $m$ is {\em $\Phi$-conformal}
(and call $\Phi$ a potential) if $m(X)=1$ and
\[
m(T(A)) = \int_A e^{-\Phi(x)}~dm(x)
\]
whenever $T:A \to T(A)$ is one-to-one. In other words, $dm\circ T(x)
= e^{-\Phi(x)}dm(x)$.

Assume that $\S_1 = \{\c_1^i \}_i$ is a countable Markov partition of
$X$ \st $T:\c_1^i \to X$ is injective for each $\c_1^i \in \S_1$. We
denote $\S_k := \bigvee_{j=0}^{k-1} T^{-j}(\S_1)$, the $k$-join of
the Markov partition $\S_1$, and suppose that the topology on $X$ is
generated by these sets.  We define\begin{equation}\label{eq:var}
V_k(\Phi) := \sup_{\c_k \in \S_k} \sup_{x,y \in \S_k} |\Phi(x) -
\Phi(y)|,
\end{equation}
We say that $\Phi$ has \emph{summable variations}  if $\sum_{k\ge
1}V_k(\Phi)<\infty$.

We define the transfer operator for a potential $\Phi$ with summable
variations as
\[
(\L_{\Phi}g)(x) := \sum_{T(y) = x} e^{\Phi(y)} g(y),
\]
where $g$ is in the Banach space of bounded continuous complex valued
functions.

Suppose that $(X,T)$ is topologically mixing and $\Phi$ is a
potential with summable variations.  For every $\c_1^i \in \S_1$ and
$k\ge 1$ let
\[
Z_k(\Phi, \c_1^i):=\sum_{T^k x = x}e^{\Phi_k(x)}1_{\c_1^i}(x),
\]   where $\Phi_k(x)= \sum_{j=0}^{k-1} \Phi \circ T^j(x)$.    As in
\cite{Satherm}, we define the \emph{Gurevich pressure} of $\Phi$ as
\begin{equation*}\label{eq:Gur}
P_G(\Phi) := \lim_{k \to \infty}\frac 1k \log Z_k(\Phi, \c_1^i).
\end{equation*}
This limit exists since $\log Z_k(\Phi, \c_1^i)$ is almost
superadditive: \begin{equation*} \log Z_k(\Phi, \c_1^i)+\log
Z_j(\Phi, \c_1^i)\le \log Z_{k+j}(\Phi, \c_1^i) + \sum\nolimits_{n\ge
1}V_n(\Phi).
\label{eq:superadd} \end{equation*}  Therefore,
$P_G(\Phi)=\sup_{k}\frac1k \log Z_k(\Phi, \c_1^i)> -\infty$.  By the
mixing condition, in \cite[Lemma 3]{Satherm}, $P_G(\Phi)$ is
independent of the choice of $\c_1^i$. To simplify the notation, we
will often suppress the dependence of $Z_k(\Phi,\c_1^i)$ on $\c_1^i$.
Furthermore, if $\|\L_\Phi 1\|_\infty<\infty$ then
$P_G(\Phi)<\infty$, see \cite[Lemma 2]{Satherm}.

Assume now that $T:X \to X$ is the full shift.  That is $T:\c_1^i\to
X$ is bijective for all $i$.

We say that $\mu$ is a \emph{Gibbs measure} if there exists
$K<\infty$  \st for all $\c_k\in \S_k$,
$$\frac1{K} \le \frac{\mu(\c_k)}{e^{\Phi_{k}(x)-kP_G(\Phi)}}\le K$$
for any $x\in \c_k$.  Here $\Phi_{k}(x):= \Phi(T^{k-1}(x))+\cdots +
\Phi(x)$.
\begin{theorem}[\cite{SaBIP}] If $(X,T)$ is the full shift, $\Phi:X
\to \R$ is a potential with $\sum_{k\ge 1}V_k(\Phi)<\infty$ and
$P_G(\Phi)<\infty$ then
 \begin{itemize}
\item[(a)] There exists a unique Gibbs measure $m_\Phi$, which
    is
    moreover $(\Phi - P_G(\Phi))$-conformal;
    \item[(b)] There exists an invariant probability measure, which is also Gibbs,
    $\mu_\Phi=\rho_\Phi m_\Phi$ where $\rho_\Phi$ is unique so
    that $\L_\Phi \rho_\Phi=e^{P_G(\Phi)} \rho_\Phi$.  Moreover,
    $\rho_\Phi$ is positive, continuous and has summable
    variations;
    \item[(c)]
 If $h_{\mu_\Phi}(T) < \infty$ or $-\int \Phi d\mu_\Phi
< \infty$ then $\mu_\Phi$ is the unique equilibrium state (in
particular, $P(\Phi) = h_{\mu_\Phi}(T) + \int_X \Phi~d\mu_\Phi$);
\item[(d)] The Variational Principle holds, i.e.,
$P_G(\Phi)=P(\Phi)$ ($=h_{\mu_\Phi}(T) + \int_X
\Phi~d\mu_\Phi$).
\end{itemize}
\label{thm:BIP}
\end{theorem}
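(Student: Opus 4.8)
The statement is the Ruelle--Perron--Frobenius theorem for the full shift with a potential of summable variations, and the plan is to follow the operator-theoretic route of \cite{SaBIP}. First I would normalise: replacing $\Phi$ by $\Phi-P_G(\Phi)$ changes none of the $V_k$, so we may assume $P_G(\Phi)=0$. The basic ingredient is the distortion bound: if $x,y$ lie in a common $\c_n\in\S_n$ then $T^jx,T^jy$ lie in a common element of $\S_{n-j}$, so $|\Phi_n(x)-\Phi_n(y)|\le\sum_{k\ge1}V_k(\Phi)=:D<\infty$. Since $T$ is the full shift, $T^{-n}\{z\}=\{wz:|w|=n\}$ and $\L_\Phi^n\I(z)=\sum_{|w|=n}e^{\Phi_n(wz)}$; comparing $\Phi_n(wz)$ with the value of $\Phi_n$ at the periodic point $\overline w=www\cdots$ (they agree in their first $n$ coordinates) shows that $\L_\Phi^n\I(z)$ equals, up to the factor $e^{\pm D}$ and uniformly in $z$, the partition sum $Z_n:=\sum_{|w|=n}e^{\Phi_n(\overline w)}$. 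A word-splitting argument gives $|\log Z_{n+m}-\log Z_n-\log Z_m|\le 2D$, so $\tfrac1n\log Z_n\to P_G(\Phi)=0$ and, crucially, $\log Z_n$ stays within $2D$ of $0$; hence each $\L_\Phi^n\I$ is bounded above and below by constants independent of $n$ (and in particular $\|\L_\Phi\I\|_\infty<\infty$). These uniform bounds are exactly what the full-shift hypothesis provides and what the rest of the argument rests on.

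Next I would construct the conformal measure $m=m_\Phi$ as an eigenmeasure, $\L_\Phi^*m=e^{P_G(\Phi)}m$, by the standard fixed-point construction applied to $\nu\mapsto\L_\Phi^*\nu/\int\L_\Phi\I\,d\nu$ on probability measures; the point is that the uniform bounds above make the relevant family tight, so no mass escapes to infinity, and $\int\L_\Phi^n\I\,dm=1$ together with $\tfrac1n\log\L_\Phi^n\I\to P_G(\Phi)$ identifies the eigenvalue as $e^{P_G(\Phi)}$. After normalisation the eigenvalue is $1$, and restricting $\L_\Phi^*m=m$ to sets on which $T$ is injective gives exactly $dm\circ T=e^{-\Phi}\,dm$, i.e. $(\Phi-P_G(\Phi))$-conformality; combined with the distortion bound this makes $m(\c_n)$ comparable to $e^{\Phi_n(x)}$, uniformly in $n$ and in $x\in\c_n\in\S_n$. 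For the eigenfunction I would use that the Ces\`aro averages $\rho_N:=\tfrac1N\sum_{n=0}^{N-1}\L_\Phi^n\I$ are uniformly bounded above and below and, since the $V_k$ are summable, have uniformly summable variations and hence are equicontinuous on each cylinder; an Arzel\`a--Ascoli and diagonal argument then yields a subsequential limit $\rho=\rho_\Phi$ which is positive, continuous, of summable variations, and satisfies $\L_\Phi\rho-\rho=\lim_N\tfrac1N(\L_\Phi^N\I-\I)=0$. I would then normalise $\int\rho\,dm=1$.

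With $\mu_\Phi:=\rho\,m$, invariance follows from the identity $\L_\Phi((g\circ T)h)=g\,\L_\Phi h$ and $\L_\Phi^*m=m$, via $\int g\circ T\,d\mu_\Phi=\int\L_\Phi((g\circ T)\rho)\,dm=\int g\,\L_\Phi\rho\,dm=\int g\,d\mu_\Phi$. The Gibbs property for $\mu_\Phi$ follows from $\mu_\Phi(\c_n)=(\rho|_{\c_n})\,m(\c_n)$ and the bounds just obtained, after un-normalising the pressure. For uniqueness I would note that any Gibbs measure is equivalent to $m$ with density bounded above and below, and that $\mu_\Phi$ is ergodic (in fact exact) on the full shift by bounded distortion; this makes the invariant Gibbs measure, the eigenmeasure $m$ and the eigenfunction $\rho$ all unique. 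For (c) and (d), the bound $\sum_{|w|=n}e^{\sup_{[w]}\Phi_n}\le e^{D}Z_n$ gives, by a counting / Shannon--McMillan--Breiman estimate, $h_\nu(T)+\int\Phi\,d\nu\le P_G(\Phi)$ for every invariant probability $\nu$, while the Gibbs bound $-\log\mu_\Phi(\c_n(x))=-\Phi_n(x)+nP_G(\Phi)+O(1)$, divided by $n$ and passed to the limit using Shannon--McMillan--Breiman on the left and Birkhoff on the right, gives $h_{\mu_\Phi}+\int\Phi\,d\mu_\Phi=P_G(\Phi)$; this last step needs exactly the hypothesis $h_{\mu_\Phi}<\infty$ or $-\int\Phi\,d\mu_\Phi<\infty$ so that the two limit theorems apply, and then also $P(\Phi)=P_G(\Phi)$. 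Finally, equality in the inequality above forces $\nu$ to coincide with $\mu_\Phi$ on all cylinders, giving uniqueness of the equilibrium state.

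I expect the two genuine difficulties to be the non-compactness of $X$ and the integrability hypotheses in (c)--(d). In constructing $m$ one must really rule out escape of mass in the weak$^*$ limit, and this is precisely the role of the uniform upper and lower bounds on $\L_\Phi^n\I$, themselves a consequence of the full-shift (big-images-and-preimages) hypothesis together with summable variations; for a general countable Markov shift the limiting object can be trivial. In (c)--(d) the finiteness of $h_{\mu_\Phi}$ or of $-\int\Phi\,d\mu_\Phi$ cannot be dropped: it is what allows Shannon--McMillan--Breiman and Birkhoff to be applied and hence the easy and hard halves of the variational principle to be matched, so the care there is in tracking where each hypothesis enters.
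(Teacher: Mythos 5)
The paper does not prove Theorem~\ref{thm:BIP}; it is quoted from Sarig \cite{SaBIP} (uniqueness of the invariant Gibbs measure is separately invoked from \cite{MauUrb} in Section~\ref{sec:invariance-weak-limit}) and used as a black box in the construction of the induced equilibrium states. There is therefore no in-paper argument to compare yours against, so what follows assesses your sketch against the cited literature.

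Your outline is a sound rendering of the Ruelle--Perron--Frobenius route. You correctly isolate the two pivots: the summable-variations distortion bound $|\Phi_n(x)-\Phi_n(y)|\le\sum_k V_k(\Phi)$ on common $n$-cylinders, and the full-shift (BIP) structure, which makes $\log Z_n$ almost-additive and hence pins $\L_\Phi^n\I$ between two $n$-independent constants --- precisely what rules out escape of mass over a non-compact alphabet. The Ces\`aro construction of $\rho_\Phi$ via equicontinuity on cylinders, the invariance identity through $\L_\Phi^*m_\Phi=e^{P_G(\Phi)}m_\Phi$, and the two halves of the variational principle (the entropy inequality one way, Shannon--McMillan--Breiman and Birkhoff applied to the Gibbs bound the other, with the finiteness hypothesis of (c) entering exactly where you say) are all standard and correct. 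The one step that would need real work in a written proof is the eigenmeasure construction: a fixed-point argument on probability measures over a non-compact alphabet requires a convex, weak$^*$-compact, $\L_\Phi^*$-invariant set, which must be manufactured from the uniform bounds on $\L_\Phi^n\I$ rather than merely remarked to follow from tightness of a single sequence; Sarig in \cite{SaBIP} sidesteps this by building the measure directly on the cylinder algebra from the partition function and only then reading off conformality and the eigenmeasure property. Finally, note that $m_\Phi$ and $\mu_\Phi=\rho_\Phi m_\Phi$ are generically distinct Gibbs measures, so the ``uniqueness'' in (a) is uniqueness of the conformal eigenmeasure and that in (b) uniqueness of the invariant Gibbs measure; your equivalence-plus-ergodicity argument does deliver both, but the statement as quoted is easy to misread.
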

Note that because $\mu_\Phi$ is a Gibbs measure, $\mu_\Phi(\c_k^i) >
0$ for every cylinder set $\c_k^i \in \S_k$, $k \in \N$.

From Proposition~\ref{prop:convergent ind}, we have inducing schemes
$(\c_{n,0},F_n)$ for $\c_{n,0}=\x_{n,k}^i$ and $F_n=F_{\x_{n,k}^i}$.
As in \cite{BTeqnat} we fix $t\in U_{\F}$ and let
$\psi_n=\psi_{n,t}:=\phi_{f_n,t}-P(\phi_{f_n,t})$.  The corresponding
induced potential is defined as $\Psi_n(x)=\Psi_{F_n}(x):=\psi_n\circ
f^{\tau_n(x)}(x)+\cdots +\psi_n(x)$.  These schemes can be coded
symbolically by the full shift on countably many symbols.  We denote
a $k$-cylinder of $F_n$ by $\c_{n,k}$, and the collection of these
cylinders by $\P_{n,k}$.  This plays the role of $\S_k$ in the
discussion of the full shift above.  We denote $\Psi_{n,k}(x):=
\Psi_n(F_n^{k-1}(x))+\cdots +\Psi_n(x)$.
The variation $V_{k}(\Psi_n)$ is defined as in \eqref{eq:var}.

As shown in \cite{PeSe, BTeqnat}, Theorem~\ref{thm:BIP} can then  be used to
produce equilibrium states for the systems $(\c_{n,0},F_n,\Psi_{n})$.
Firstly, it can be shown, for example in \cite[Lemma 7]{BTeqnat}, that
$\Psi_n$ have summable variations.  Next, in the proofs of Theorem 1
and 2 of \cite{BTeqnat} it was proved that $P_G(\Psi_n)=0$ when $f_n$
satisfies \eqref{eq:poly} and \eqref{eq:CE} respectively. Then using
Theorem~\ref{thm:BIP} we get conformal measures $m_{F_n}=m_{\Psi_n}$,
densities $\rho_{F_n}=\rho_{\Psi_n}$, and
equilibrium states $\mu_{F_n}=\rho_{F_n}m_{F_n}$ for $(\c_{n,0},
F_n,\Psi_n)$.  These project to equilibrium states
$\mu_n=\mu_{\psi_n}$ for $(I, f_n,\psi_n)$.  Note that an equilibrium
state for $\psi_n$ is also an equilibrium state for $\phi_{f_n,t}$.  Also note that these arguments imply that $\mu_{f_n}$ is compatible to each of the inducing schemes in Proposition~\ref{prop:convergent ind}.

We finish this section by proving a uniform bound on the variation of
$\rho_{F_n}$ which will be useful later.

\begin{remark}
We define the distortion constants $$B_{n,k}:= \exp\left(\sum\nolimits_{j\ge
k+1}V_j(\Psi_{n})\right).$$  By \cite[Lemma 7]{BTeqnat} the Koebe space given by the fact that our inducing schemes have diffeomorphic
extensions to $(1+\delta)C_{n,0}$ implies that there exist
$0<\lambda(\delta, t)<1$ and $C(\delta)>0$ so that $V_k(\Psi_n) \le
C(\delta)\lambda(\delta,t)^k$.  Then there exist $C'(\delta)>0$ and
$\lambda'(\delta,t)$ so that $B_{n,k}=B_{n,k}(\delta,t) \le
C'(\delta)\exp\left(\sum_{j\ge k+1}\lambda'(\delta,t)^k\right)$.
Therefore we can choose $B_{n,k}$ to be independent of $n$.  We
denote this bound by $B_k$. \label{rmk:distn}
\end{remark}

Following the proof of \cite[Theorem 1]{SaBIP}, any constant $H_n$
with  $H_n \ge (\sup \rho_{F_n})^2$ where $\rho_{F_n}$ is as in
Theorem~\ref{thm:BIP}(b) has the following property.  For any
$\c_{n,k}\in \P_{n,k}$,
$$\frac1{H_nB_0} \le
\frac{\mu_{F_n}(\c_{n,k})}{e^{\Psi_{n,k}(x)-kP_G(\Psi_n)}}\le
H_nB_0$$ for any $x\in \c_{n,k}$.  As noted above, by \cite{BTeqnat},
$P_G(\Psi_n)=0$. We are allowed to take a uniform distortion constant
$B_0$ for all of our maps $F_n$ by our choice of $\c_{n,0}$. It is
important here to replace $H_n$ with a uniform constant $H$.  We
consider how $H_n$ was obtained.  For the following lemma and its
proof we fix $f=f_n$, so dropping any extra notation. Note that the
bound used in \cite[p1754]{SaBIP} is not sufficient for us since it
depends on the measure of a cylinder, which can be different for
different $n$.

\begin{lemma}
$V_0(\log \rho_\Psi) \le 2\log B_0$ and the Gibbs constant can be
chosen to be $H_f=(B_0)^4$. \label{lem:H}
\end{lemma}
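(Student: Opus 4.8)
The plan is to get a uniform bound on $\sup\rho_{\Psi_n}$ via the Gibbs property of $\mu_{F_n}$ and the conformality of $m_{F_n}$, exploiting that $\mu_{F_n}=\rho_{\Psi_n}m_{\Psi_n}$ and that $P_G(\Psi_n)=0$. The key point is that $\rho_\Psi$ is, up to the uniform distortion constant $B_0$, a ratio of $\mu_\Psi$-measure to $m_\Psi$-measure on small cylinders, and both of these are pinned down by the Gibbs/conformal property in terms of the same quantity $e^{\Psi_k(x)}$.

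First I would estimate the oscillation of $\rho_\Psi$. For a $k$-cylinder $\c_k$ and $x\in\c_k$, conformality of $m_\Psi$ (which is $(\Psi-P_G(\Psi))$-conformal with $P_G(\Psi)=0$, so $(\Psi)$-conformal, and recalling $F^k:\c_k\to X$ is a bijection since $X$ is the full shift) gives $m_\Psi(\c_k)=\int_{\c_k}e^{-\Psi_k}\,dm_\Psi\circ(F^k) = B_0^{\pm}e^{\Psi_k(x)}$ using the distortion bound $B_0$ (the variation control of $\Psi$ via Remark~\ref{rmk:distn}, with $m_\Psi(X)=1$). Comparing with the Gibbs bound for $\mu_\Psi$, namely $\mu_\Psi(\c_k)=H B_0^{\pm} e^{\Psi_k(x)}$ if $H\ge(\sup\rho_\Psi)^2$, and then letting $k\to\infty$ with $x,y$ in a common large cylinder, the ratio $\rho_\Psi(x)/\rho_\Psi(y)$ is squeezed between $1/B_0^2$ and $B_0^2$ (the $H$'s cancel), giving $V_0(\log\rho_\Psi)\le 2\log B_0$. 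This is the first displayed claim.

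Next, to pin the scale rather than just the oscillation, I would use $\int\rho_\Psi\,dm_\Psi=\mu_\Psi(X)=1$, so $\inf\rho_\Psi\le 1\le\sup\rho_\Psi$; combined with $V_0(\log\rho_\Psi)\le 2\log B_0$ this yields $\sup\rho_\Psi\le B_0^2$. Hence $(\sup\rho_\Psi)^2\le B_0^4$, so we may take $H_f=B_0^4$ as the Gibbs constant, and since $B_0$ is uniform in $n$ by Remark~\ref{rmk:distn}, so is $H$. I would present this by unwinding Sarig's construction of $\rho_\Psi$ in \cite{SaBIP} just enough to see that the admissible Gibbs constant is exactly $(\sup\rho_\Psi)^2 B_0$ up to the distortion factor already absorbed, and that the bound $\sup\rho_\Psi\le B_0^2$ feeds back in.

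The main obstacle is being careful that all distortion/variation constants used — the $B_0$ coming from $\sum_{j\ge 1}V_j(\Psi_n)$ and the conformality comparison — are genuinely the uniform $B_0$ of Remark~\ref{rmk:distn} and not something depending on a particular cylinder's measure (this is exactly the point flagged about \cite[p1754]{SaBIP}). Once one checks that the only inputs are $m_\Psi(X)=1$, $\mu_\Psi(X)=1$, conformality, the Gibbs inequality, and the uniform variation sum, the bound is clean and $n$-independent.
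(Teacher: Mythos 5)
The second half of your argument is correct and matches the paper's: since $\int\rho_\Psi\,dm_\Psi=\mu_F(\c_0)=1$, one must have $\inf\rho_\Psi\le 1\le\sup\rho_\Psi$, and combining this with $V_0(\log\rho_\Psi)\le 2\log B_0$ pins $\sup\rho_\Psi\le B_0^2$, hence $(\sup\rho_\Psi)^2\le B_0^4$ and $H_f=B_0^4$ works.

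The first half, however, is circular as written. You want to obtain $V_0(\log\rho_\Psi)\le 2\log B_0$ by comparing the Gibbs estimate $\mu_\Psi(\c_k)=(HB_0)^{\pm}e^{\Psi_k(x)}$ with the conformal estimate $m_\Psi(\c_k)=B_0^{\pm}e^{\Psi_k(x)}$, then shrinking $\c_k$. But $H$ is precisely what this lemma is trying to determine (all we know a priori is that any $H\ge(\sup\rho_\Psi)^2$ works, which is what we are estimating). Dividing the two estimates for $\c_k[x]$ gives only $\rho_\Psi(x)\in\bigl[(HB_0^2)^{-1},\,HB_0^2\bigr]$, so the ratio $\rho_\Psi(x)/\rho_\Psi(y)$ is squeezed between $(HB_0^2)^{-2}$ and $(HB_0^2)^2$: the $H$'s do \emph{not} cancel, because the two-sided bounds can be saturated in opposite directions on the disjoint cylinders $\c_k[x]$ and $\c_k[y]$. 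Your argument therefore yields $V_0(\log\rho_\Psi)\le 2\log H + 4\log B_0$, which still involves the unknown $H$ and gives nothing.

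The paper's route avoids $H$ entirely by using the eigenfunction equation $\L_\Psi\rho_\Psi=\rho_\Psi$ (here $P_G(\Psi)=0$). Writing $\rho_\Psi(x)=\sum_i e^{\Psi(y_i)}\rho_\Psi(y_i)$ over the preimages $y_i\in\c_1^i$ and comparing the sums for $x_1,x_2\in\c_0$ term by term, one needs to control the oscillation within each $\c_1^i$ of both $\Psi$ (via $V_1(\Psi)\le\log B_0$) \emph{and} $\log\rho_\Psi$ (via the nontrivial input $V_1(\log\rho_\Psi)<\log B_1$, which comes from Sarig's construction of $\rho_\Psi$ and is established independently of any Gibbs constant for $\mu_\Psi$). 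Your proposal has no substitute for that last input, and without it the bound on $V_0(\log\rho_\Psi)$ does not close.
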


\begin{proof}
According to \cite[(3.12)]{Sathesis}, see also the argument of
\cite[Lemma 6]{Satherm}, \linebreak $V_1(\log\rho_\Psi)<\log B_1$. We
use this to show $V_0(\log\rho_\Psi)$ is uniformly bounded above.
Take $x_1,x_2\in \c_0$.  Let $y_{1,i}, y_{2,i}$ be the unique points
in $ \c_1^i$ \st $F(y_{1,i})=x_1$ and $F(y_{2,i})=x_2$.  Then since
$\L_\Psi \rho_\Psi=\rho_\Psi$,
\begin{align*}
\left|\frac{\rho_\Psi(x_1)}{\rho_\Psi(x_2)}\right| & = \left|
\frac{\sum_{Fy_1=x_1} e^{\Psi(y_1)}\rho_\Psi(y_1)}{\sum_{Fy_2=x_2}
e^{\Psi(y_2)}\rho_\Psi(y_2)}\right| \le \left|\frac{\sum_i \sup_{x\in
\c_1^i} e^{\Psi(x)}\rho_\Psi(x) }{\sum_i \inf_{y\in \c_1^i}
e^{\Psi(y)}\rho_\Psi(y)}\right|\le B_0B_1.
\end{align*}
 Therefore the first part of the lemma is finished.
There must exist $x_1, x_2\in \c_0$ with $\rho_\Psi(x_1) \le 1$ and
$\rho_\Psi(x_2)\ge 1$: otherwise in the first case $\mu_F(\c_0)>1$,
and in the second case $\mu_F(\c_0)<1$.  So setting $H_f:=(B_0)^4$ we
have $H_f \ge (\sup \rho_\Psi)^2$, so we are finished.
\end{proof}

For use later, we let $H_{\F}:=(B_0)^4$.

\section{Gibbs property for the weak$^*$ limit of Gibbs measures}
\label{sec:gibbs-property}

Later in this section we will fix some inducing schemes $(X_n,F_n)$
as in Proposition~\ref{prop:convergent ind} with induced measures
$\mu_{F_n}$. By passing to a subsequence if necessary, we can define
$\mu_{F_\infty}$, a weak$^*$ limit of $\left(\mu_{F_n}\right)_{n}$.
From the previous section and a uniqueness argument from
\cite{MauUrb}, we know that if we prove that $\mu_{F_\infty}$
satisfies the Gibbs property and is invariant, then $\mu_{F_\infty} =
\mu_{F_0}$. This section is devoted to proving that $m_{F_\infty}$
has the Gibbs property which will allow us to conclude that
$\mu_{F_\infty}$ has the Gibbs property also.  The proof of the
following lemma relies heavily on \cite{BLS} and \cite{BTeqnat}.  In
the proof we outline the main ideas used from those papers, in
particular the origin of the important constants used.

\begin{lemma}
\label{lem:uniform decay} For a given family $\F=\F_e(r,\ell,C,
\alpha)$ or $\F=\F_p(r, \ell, C, \beta)$ satisfying  \eqref{eq:CE} or
\eqref{eq:poly} respectively, and every $f$ in a neighbourhood of any
$f_0\in\F$ fixed, there exist $C'>0$, $\alpha'>0$ or $\beta'>0$ and
an inducing scheme $(X,F)$ as in Proposition~\ref{prop:convergent
ind}, with inducing time $\tau$, so that for all $N\ge 1$,
$$\mu_{F}\{\tau>N\} \le C'e^{-\alpha'N} \hbox{ or }
\mu_{F}\{\tau>N\} \le C'N^{-\beta'} \hbox{ respectively }.$$
\end{lemma}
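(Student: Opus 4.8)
The plan is to transfer the tail estimates for the inducing schemes $G_n:\bigcup\Delta_n^i\to\Delta_n$ from \cite{BLS} — which control the return time $r_n$ with respect to Lebesgue measure $m$, hence with respect to the acip $\nu_n$ — into tail estimates for the inducing time $\tau$ of our Hofbauer-tower scheme $(X,F)=(\x_{n,k}[x],F_{\x_{n,k}[x]})$ with respect to the equilibrium measure $\mu_{F}$. First I would recall the decay estimates proved in \cite{BLS}: under \eqref{eq:CE} one has $m\{r_n>N\}\le C'e^{-\alpha'N}$ and under \eqref{eq:poly} one has $m\{r_n>N\}\le C'N^{-\beta'}$ with $\beta'=\beta'(\beta,\ell)>1$, and crucially the constants $C',\alpha',\beta'$ depend only on the data $(r,\ell,C,\alpha)$ or $(r,\ell,C,\beta)$ defining $\F$, not on the individual map; this is exactly the uniformity that the discussion preceding Claim~\ref{claim:Dstar} (the uniform lower bound $\eta$ on $|\Delta_n|$, and the remark that the construction in \cite[Lemma 4.2]{BLS} uses only finitely many iterates and constants uniform in a neighbourhood of $f_0$) is designed to give. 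In particular $\int r_n\,d\nu_{G_n}$ is uniformly bounded above, which is the fact invoked in the proof of Claim~\ref{claim:Dstar}.

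Next I would compare the two inducing schemes. The scheme $(X,F)$ is a first-return map (in the Hofbauer tower) to $\check\x_{n,k}[x]$ with $\x_{n,k}[x]\Subset\Delta_n$ for all large $n$; each $F$-branch is a composition of finitely many $G_n$-iterates together with a bounded number of extra $f_n$-iterates needed to travel between $\x_{n,k}[x]$ and $\Delta_n$ and back (bounded because these excursions stay inside the fixed compact part of the Hofbauer tower determined by the finite word-length used to locate $\x_{n,k}[x]\Subset\Delta_n$). Concretely, if $\tau(y)>N$ then along the $f_n$-orbit of $y$ of length $N$ there must be a long stretch — of length at least $N/(2m_0)$, say, where $m_0$ bounds the travel time — with no good return to $\Delta_n$, so $\{\tau>N\}$ is contained, up to the $G_n$-dynamics, in a union of preimages of $\{r_n>cN\}$ for a fixed constant $c>0$. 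Using that $\mu_F$ projects to $\mu_{f_n}=\mu_n$ (the acip, when $t=1$) or more generally that $\mu_F$ is the Gibbs/equilibrium measure for $\Psi_n$ with $P_G(\Psi_n)=0$, together with the bounded-distortion constant $B_0$ of Remark~\ref{rmk:distn} and Lemma~\ref{lem:H}, one estimates $\mu_F\{\tau>N\}$ by summing the Gibbs weights $e^{\Psi_{n,k}}$ over the relevant cylinders; these weights are comparable (up to $H_\F B_0$) to $\mu_F$-measures of cylinders whose $\mu_n$-images are nested in $\{r_n>cN\}$, and one reads off the bound $C''e^{-\alpha''N}$ respectively $C''N^{-\beta''}$, with constants again depending only on $\F$. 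For the $t=1$ acip case this comparison is cleanest since $\mu_n\ll m$ with densities bounded in $L^\infty$ uniformly by \cite{BLS}; for general $t\in U_\F$ one works directly with the conformal measure $m_{\Psi_n}$ and the Gibbs property.

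The main obstacle I expect is the uniformity-in-$n$ of all constants: one must check that the decay rates $\alpha',\beta'$ and multiplicative constants produced in \cite{BLS} genuinely depend only on the family data, that the travel-time bound $m_0$ between $\x_{n,k}[x]$ and $\Delta_n$ can be taken uniform in $n$ (this follows from $\x_{n,k}[x]\to\x_{0,k}[x]$ and $\Delta_n\to\Delta_0$ in the Hausdorff metric, together with the fixed finite word length $k$), and that the distortion bounds $B_0$, $H_\F$ are $n$-independent — which is exactly what Remark~\ref{rmk:distn} and Lemma~\ref{lem:H} provide. A secondary technical point is the passage from a Lebesgue/acip tail estimate to an $\mu_F$-tail estimate; this is handled by \eqref{eq:lift} and the Gibbs property, but requires care so that the (finite, uniformly bounded) normalising factor $\int_X\tau\,d\mu_F$ does not degrade the rate. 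Once these uniformities are in hand, the stated bounds follow by choosing $C'$ large enough and $\alpha'$ (resp. $\beta'$) slightly smaller than the rates obtained, valid for every $f$ in a fixed neighbourhood of $f_0$.
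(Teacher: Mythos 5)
Your high-level strategy matches the paper's: import the BLS tail estimates, argue the constants are uniform in the family, and transfer them to $\mu_F$. Where you diverge is the crucial step of relating the Hofbauer-tower first-return time $\tau$ to the BLS return time $r_n$, and that step has a real gap.

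The paper does not re-derive this correspondence; it cites the proof of \cite[Theorem 1]{BTeqnat} for it. You instead assert that each $F$-branch is a composition of finitely many $G_n$-iterates plus a uniformly bounded travel time $m_0$, and deduce that $\{\tau>N\}$ sits inside preimages of $\{r_n>cN\}$. This deduction is not sound. A first return to $\check\x_{n,k}$ in the Hofbauer tower requires landing in the specific set $\sqcup\{D\cap\pi_n^{-1}(\x_{n,k}):\pi_n(D)\supset(1+\delta)\x_{n,k}\}$, not merely making a good entry to $\Delta_n$ or even to $\x_{n,k}$. An orbit can make arbitrarily many $G_n$-returns to $\Delta_n$ — and arbitrarily many good entries to $\x_{n,k}$ — while the lifted orbit stays in Hofbauer domains $D$ with $\pi_n(D)\not\supset(1+\delta)\x_{n,k}$, so $\tau$ is not bounded by a fixed multiple of the sum of $G_n$-return times along the orbit. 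The ``good entry implies a landing in $D_n^*$'' argument used in Claim~\ref{claim:Dstar} gives positive \emph{frequency} of visits to one domain $D_n^*$, which is enough for that claim but does not control the tail of the first-return time to $\check\x_{n,k}$. Establishing the actual tail comparison is the content deferred to \cite{BTeqnat}, where the Hofbauer scheme and the BLS scheme are carefully matched; you would need to reproduce that matching, not just a travel-time bound.

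A second, lesser gap: you wave at the case $t\neq1$ with ``work directly with the conformal measure $m_{\Psi_n}$ and the Gibbs property,'' but the BLS tail bound is with respect to Lebesgue, and for $t\neq1$ the conformal measure $m_{\Psi_n}$ is singular to Lebesgue. Passing from an $m$-tail bound to a $\mu_F$-tail bound is exactly what the paper credits to the proofs of \cite[Theorems 1 and 2]{BTeqnat} (there the Lebesgue tail estimate is converted into an estimate for the induced pressure/Gibbs weight, using the growth along the critical orbit to control $e^{\Psi_n}$ on long-inducing-time cylinders). Your sketch omits this conversion. The part of your proposal that is genuinely parallel to the paper — the uniformity of the BLS constants $(\kappa,K_0,\delta,C_\delta,\lambda_\delta,\delta',\delta'',|\Omega_0|,\dots)$ in a $C^2$-neighbourhood of $f_0$ — is on the right track, but it is the easier half of the argument; the scheme-to-scheme comparison is the part the paper is at pains to outsource, and it is the part your argument does not actually establish.
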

\begin{proof}
In the proof of Theorem 1 of \cite{BTeqnat}, the correspondence
between our inducing scheme and the one considered in \cite{BLS} is
given, which allows to conclude that the estimates for the tail of
our inducing scheme in the case the potential is $\psi_{n,1}$, i.e. for the acip, are given by the ones in \cite{BLS}.  In the proofs of Theorems 1 and 2 of \cite{BTeqnat} it is then shown how estimates for the tails for the potentials $\psi_{n,t}$ can be easily related.  The main
result in \cite{BLS} is proven by establishing that the growth rate
of the derivative along the critical orbits determines the rate at
which the Lebesgue measure of the tail of the inducing scheme falls
off with time. Hence, roughly speaking, the estimates for the tail
obtained in \cite{BLS} depend essentially on the parameters that
define the family $\F$. We will give some more insight on the
construction of the inducing schemes of \cite{BLS} so that the role
of the constants involved becomes clearer. In what follows we will
use the notation of \cite{BLS} although it may differ from the one we
use in the rest of the paper.

Bruin, Luzzatto and van Strien build a Markov map
 $f^R:\Omega_0\to\Omega_0$ on a small neighbourhood of one of the
 critical points. The key idea is that outside a neighbourhood
 $\Delta$ of the critical points we have hyperbolic behaviour which
 leads to the exponential growth of the derivative. On the other
 hand, when we enter $\Delta$, which happens frequently, we
 have a serious setback on hyperbolicity since the derivative takes
 values very close to $0$. However, because inside $\Delta$ points
 are very close to a critical point they initiate a bind to it and
 regain hyperbolicity on account of the derivative growth
 experienced
 by the critical orbits which we have assumed. Once this binding
  ends, the losses are fully recovered and the derivative then grows
 exponentially fast until we enter $\Delta$ again and the cycle
 repeats itself. This means that the amount of time spent before an
 interval from $\Omega_0$ becomes large enough to cover the whole
 $\Omega_0,$ reflects the growth rate of the derivative along
 critical
 orbits. The construction of the full return map is made in three
 major steps.

 The first step is carried out in \cite[Section~2]{BLS}, where the
 binding argument is described which allows inducing to small
 scales.
 In this section, the following constants are introduced:
  $\kappa$ the \emph{bounded backward contraction constant} (see
  \cite[Lemma~9]{BTeqnat}),
  which depends only on the parameters that define the family $\F$
  and on
  the number of critical points; $K_0$ a Koebe distortion constant
  that
  turns out to be $\leq 16$; $\delta$ which establishes the size of
  the
   critical neighbourhood $\Delta$ and depends on $\kappa$, $K_0$,
   and on the parameters that define the family $\F$. This means
   that
   these constants can be chosen uniformly within the family. Some
   expansion estimates are also derived in this section. In
   \cite[Lemma~2.4]{BLS} the constants $C_\delta$ and
   $\lambda_\delta$, that essentially give the hyperbolicity outside
   the critical region, can be chosen uniformly inside a
   neighbourhood of a fixed $f_0\in\F$. The crucial estimate that
   gives the recovery of
   hyperbolicity after the binding period is
   established in \cite[Lemma~2.5]{BLS}. These estimates depend only
   on the
   parameters that define the family $\F$.

The second step, the most influential step for the tail rate
estimates, is done in \cite[Section~3]{BLS}. It consists in inducing
to large scales which is stated in Proposition~3.1. Essentially, it
is proved that there exists $\delta'>0$ such that for all
$\delta''>0$, any interval $J$ of length at least $\delta''>0$, can
be partitioned in such a way that for every element $\omega$ of the
partition, there is a stopping time $\hat p(\omega)$ such that
$f^{\hat p(\omega)}$ sends $\omega$ diffeomorphically onto an
interval of length at least $\delta'$. Moreover, the tail of this
stopping time function, $m(x\in J:\hat p(x)>n)/m(J)$ decays
accordingly to the derivative expansion on the critical orbits. The
constant $\delta'$ is given by the contraction principle and can be
taken uniformly inside a neighbourhood of any fixed $f_0\in \F$. During
this section, combinatorial estimates are obtained and several
constants that can be chosen uniformly are introduced. The crucial
observation is that the constants obtained for the estimates for
$m(x\in J:\hat p(x)>n)/m(J)$ depend on the parameters that define the
family $\F$, the constants fixed in the previous step and on both
$\delta'$ and $\delta''$. This means that the estimates on the tail
of the time spent to reach large scale are uniform on a neighbourhood
of any $f_0\in\F$.

The third step, in \cite[Section~4]{BLS}, gives the final
construction of the full return map. It starts with
\cite[Lemma~4.2]{BLS} that fixes the base $\Omega_0$ for the inducing scheme that, as we have discussed in the proof of
Claim~\ref{claim:Dstar}, can be chosen uniformly inside a
neighbourhood of any fixed $f_0\in\F$.  Then it is proved that
once an interval achieves large scale, a fixed proportion of it will
make a full return to the base $\Omega_0$ in a finite number of
iterates, which is a property that persists under small perturbations
of $f_0$. This means that once an interval achieves large scale, it
will make a full return exponentially fast. This implies that the
estimates on the tail of the full return map are essentially the ones
obtained in the second step for the time it takes to reach large
scale. The constant $\delta''$ is fixed and its value depends on
$\delta'$ and on the size of $\Omega_0$. All the other constants that appear turn out to depend  on the parameters that define the family
$\F$ and on previous constants, which means that they can all be
chosen uniformly inside a neighbourhood of any fixed $f_0\in\F$.

Note that in \cite[Lemma 9]{BTeqnat} the condition that all the critical
points had to have the same critical order, which had been required
in \cite{BLS}, was removed.
\end{proof}

As a consequence of this lemma, for a given $f_0\in\F$ we can choose
$\kappa=\kappa_{f_0}:\N\to [0,1]$ to be the function so that
$\mu_{F_n}\{\tau_n>s_0\}\le \kappa(s_0)$ for $n$ large enough, and
$\kappa(s_0)\to 0$ as $s_0\to \infty$.

We next make conditions on our inducing schemes, so that only some of
those in Proposition~\ref{prop:convergent ind} will be appropriate
choices. We select our inducing schemes so that the boundary of any
1-cylinder is accumulated by other 1-cylinders. In particular so that
the boundary of a 1-cylinder with a small inducing time is
accumulated by 1-cylinders with larger and larger inducing times.

Since $\x_{n,k}\in \P_{n,k}$ are cylinder sets, $f_n^j(\bd
\x_{n,k})\cap \stackrel{\circ}{\x}_{n,k} =  \es$ for all $1\le j\le
k$.  However we can choose $\x_{0,k}^i\in \P_{0,k}$ so that
$f_0^j(\bd \x_{0,k}^i)\cap \bd \x_{0,k}^i = \es$ for all $1\le j\le
k$ also.  Then as in Lemma~\ref{lem:trans conv}, for each $n$ large
enough, there are corresponding cylinders $\x_{n,k}^i$ with
$f_n^j(\bd \x_{n,k}^i)\cap \bd \x_{n,k}^i = \es$ for all $1\le j\le
k$ also. It is easy to show that this property can be satisfied for
our class of maps.  We denote $\c_{n,0}$ to be the cylinder
$\x_{n,k}^i$, which is fixed for the rest of this paper.  The maps
$F_n=F_{\x_{n,k}^i}$ are defined in Proposition~\ref{prop:convergent
ind}.  Recall that we set $\P_{n,0} := \{\c_{n,0}\}$, and define
$\P_{n,k}$ to be the set of $k$-cylinders for the inducing scheme
$F_n$.  This construction means that $\c_{n,1}^{i_1}\cap
\c_{n,1}^{i_2} = \es$ for all $i_1 \neq i_2$ for all large $n$.  We
exploit this property in Remark~\ref{rem:accum}. We may assume that
this property actually holds for all $n$.

Let $\tau_{n,k}^i$ be the $k$th inducing time on a  cylinder
$\c_{n,k}^i$, i.e. $f^{\tau_{n,k}^i}(\c_{n,k}^i)=\c_{n,0}$. For
brevity we will write $\tau_{n,1}^i=\tau_n^i$.  For use later, note that our potentials $\Psi_{n,k}$ can be written as $\Psi_{n,k}(x)=\Phi_{n,k}(x)-P(\phi_{f_n,t})\tau_{n,k}(x)$ where $\Phi_{n,k}(x):=-\log|DF_n^k(x)|$.

Any element of $\c_{0,k}^i\in \P_{0,k}$ is of the form $\c_{0,k}^i=[a_0,a_1]$ where there is some $p_0, p_1\in\{0,1,\ldots\}$ \st $f_0^{p_i}(a_i)\in \crit_0$ for $i=0,1$. As in the proof Lemma~\ref{lem:trans conv}, for $n$ large enough, depending on $p:=\max\{p_1, p_2\}$, there exists an order preserving bijection $h_{n,p}:\cup_{j=0}^p f_n^{-j}(\crit_n) \to \cup_{j=0}^p f_0^{-j}(\crit_0)$.  Hence there are corresponding points $a_i^n:=h_{n,p}^{-1}(a_i)$, $i=0,1$.  By the proof of Proposition~\ref{prop:convergent ind}, there exists $N\in \N$ \st  for all $n\ge N$, $[a_0^n,a_1^n]$ is a member $\P_{n,k}$, which we can label $\c_{n,k}^i$.  We say that for
$n\ge N$, $\c_{0,k}^i$ is \emph{matched}; \label{p:match} or similarly that
$\c_{n,k}^i$ is \emph{matched}.  In this case, $\c_{n,k}^i \to
\c_{0,k}^i$ as $n \to \infty$.

\begin{remark}
\label{rem:accum} Given $i\ge 1$, for all $M\ge 1$ there  exists
$\eta>0$ and $N\ge 1$ so that for all $n\ge N$,  $(1+\eta)\c_{n,1}^i
\sm \c_{n,1}^i$  only intersects 1-cylinders with $\tau_n>M$. To show
this, we start by choosing $N$ so large that
$\{\c_{n,1}^j:\tau_n^j\le M\}$ are matched for all $n\ge N$. Now let
$\eta:=\frac{1}{2}\min_{j\neq i,\ \tau_0^j\le M} d(\c_{0,1}^i,
\c_{0,1}^j)$.    By the setup, $\eta>0$.  Now we may increase $N$ so
that $n\ge N$ implies $\c_{n,1}^j \cap \left(1+\frac\eta
2\right)\c_{0,1}^j = \c_{n,1}^j$ for all $j$ with $\tau_0^j\le M$.
This means that $\eta$ has the property required.
\end{remark}

\begin{lemma}
\label{lem:matched}For all $\eps>0$ there exists $i_0 \ge 1$ and $N
\ge 1$ \st $\c_{0,1}^i$ is matched for all $1\le i\le i_0$ for all $n
\ge N$, and furthermore $n \ge N$ implies $\mu_{F_n}\left(\bigcup_{i
>i_0}\c_{n,1}^i\right)<\eps$.
\end{lemma}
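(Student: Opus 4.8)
The plan is to combine two facts: the uniform tail estimate of Lemma~\ref{lem:uniform decay} (which controls $\mu_{F_n}$ of cylinders with large inducing time, uniformly in $n$) with the matching/convergence information coming from Proposition~\ref{prop:convergent ind} and Remark~\ref{rem:accum} (which controls finitely many cylinders of small inducing time). The point is that the $1$-cylinders $\c_{n,1}^i$ with $\tau_n^i$ bounded are matched for large $n$, so for the ``bulk'' of the measure we have genuine convergence, while the remaining cylinders carry uniformly small mass.

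First I would fix $\eps>0$. By the remark following Lemma~\ref{lem:uniform decay}, choose $s_0$ so large that $\kappa(s_0)<\eps/2$; then $\mu_{F_n}\{\tau_n>s_0\}\le\kappa(s_0)<\eps/2$ for all $n$ large enough, say $n\ge N_1$. Next, the $1$-cylinders of $F_0$ with inducing time $\le s_0$ form a finite collection; relabel so that these are exactly $\c_{0,1}^1,\dots,\c_{0,1}^{i_0}$ for some $i_0\ge 1$. (Here I use that each $F_n$ is coded by the full shift on countably many symbols, so the $1$-cylinders with a given bound on the inducing time are finite in number — this follows from the Hofbauer-tower construction, since there are only finitely many first-return domains with return time $\le s_0$.) By the matching discussion on page~\pageref{p:match} (itself a consequence of the proof of Lemma~\ref{lem:trans conv} and Proposition~\ref{prop:convergent ind}), there is $N_2\ge N_1$ so that for all $n\ge N_2$ and all $1\le i\le i_0$ the cylinder $\c_{0,1}^i$ is matched, with corresponding matched cylinder $\c_{n,1}^i$ satisfying $\c_{n,1}^i\to\c_{0,1}^i$.

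It remains to see that $\mu_{F_n}\bigl(\bigcup_{i>i_0}\c_{n,1}^i\bigr)<\eps$ for all large $n$. The cylinders $\c_{n,1}^i$ with $i>i_0$ split into those with $\tau_n^i>s_0$ and those with $\tau_n^i\le s_0$. The former have total $\mu_{F_n}$-measure at most $\kappa(s_0)<\eps/2$ by the choice of $N_1$. For the latter: a $1$-cylinder of $F_n$ with inducing time $\le s_0$ corresponds, via the order-preserving bijection $h_{n,p}$ (with $p$ depending on $s_0$) used in the matching construction, to a $1$-cylinder of $F_0$ with inducing time $\le s_0$; since there are only $i_0$ of the latter and they are all matched for $n\ge N_2$, for such $n$ there simply are no $1$-cylinders of $F_n$ with $\tau_n^i\le s_0$ and $i>i_0$. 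Hence for $n\ge N:=N_2$ the whole union $\bigcup_{i>i_0}\c_{n,1}^i$ consists only of cylinders with $\tau_n>s_0$, giving $\mu_{F_n}\bigl(\bigcup_{i>i_0}\c_{n,1}^i\bigr)\le\kappa(s_0)<\eps$, which is even stronger than claimed. This proves the lemma.

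The main obstacle, and the step that deserves care, is the bookkeeping in the matching argument: one must be sure that for large $n$ the $1$-cylinders of $F_n$ with small inducing time are \emph{exactly} the matched ones, with no ``extra'' small-time cylinders appearing. This uses the finiteness of first-return domains of bounded return time in the Hofbauer tower together with the Hausdorff convergence $\hat I_{n,\T}^R\to\hat I_{0,\T}^R$ of Lemma~\ref{lem:trans conv} and the pointwise convergence of the return maps from Proposition~\ref{prop:convergent ind}; the assumption that critical orbits do not intersect (invoked in the proof of Claim~\ref{claim:Hof conv}) is what prevents small-time domains from splitting or colliding in the limit.
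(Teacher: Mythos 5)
Your proof is correct and takes essentially the same approach as the paper's: fix $s_0$ with $\kappa(s_0)<\eps$, observe that only finitely many $1$-cylinders of $F_0$ have $\tau_0^i\le s_0$, and then use the matching from Remark~\ref{rem:accum} to arrange that for all large $n$ the matched cylinders account for exactly those with small inducing time, so the remainder lies in the tail set of uniformly small measure. The only difference is that you spell out the bookkeeping that the paper compresses into one sentence (namely, that no ``extra'' small-inducing-time cylinders of $F_n$ can appear for large $n$), which is a reasonable thing to do carefully but does not change the argument.
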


\begin{proof}
Let $s_0$ be so that $\kappa(s_0)<\eps$.  So $s_0$ depends only on
$\eps$ and $f_0$ as in Lemma~\ref{lem:uniform decay}. We choose $i_0$
so that $\tau_0^i > s_0$ for all $i> i_0$. Similarly to
Remark~\ref{rem:accum}, we can choose $N$ so large that $\c_{n,1}^i$
are matched for all $1\le i\le i_0$ and that $\tau_n^i >s_0$ for all
$i> i_0$ and all $n\ge N$.  It then follows that
$\mu_{F_n}\left(\bigcup_{i >i_0}\c_{n,1}^i\right)<\eps$ as required.
\end{proof}

In the following lemmas we repeatedly use the conformal property of
$m_{F_n}$ for $n=0,1,2\ldots$. This allows us to compare behaviour at
small scales with that at large scale.

\begin{lemma}
For all $\eps>0$ for all $i_0 \ge 1$ there exists $\eta>0$, \st for
all $k\ge 1$, any  $\c_{0,k}^j\in \P_{0,k}$ with
$F_0^{k-1}(\c_{0,k}^j) = \c_{0,1}^i$ and $1\le i\le i_0$ has
$$\frac{m_{F_0}\left((1+\eta)\c_{0,k}^j\right)}{m_{F_0}(\c_{0,k}^j)}
\le B_0\left(1+\frac\eps4\right) \hbox{ and } \frac{m_{F_0}\left(
\left(\frac1{1+\eta} \right) \c_{0,k}^j\right)}{m_{F_0}(\c_{0,k}^j)}
\ge \frac1{B_0\left(1+\frac\eps4\right)}.$$ \label{lem:F0 fringe}
\end{lemma}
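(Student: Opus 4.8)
The plan is to exploit the conformality of $m_{F_0}$ to reduce a distortion estimate at an arbitrary small scale $\c_{0,k}^j$ to a distortion estimate at scale $1$, namely on $\c_{0,1}^i$ with $1\le i\le i_0$. First I would write $\c_{0,k}^j$ as the pullback of $\c_{0,1}^i$ under the branch of $F_0^{k-1}$ mapping $\c_{0,k}^j$ diffeomorphically onto $\c_{0,1}^i$; call this branch $g:=F_0^{k-1}|_{\c_{0,k}^j}$, which extends diffeomorphically to $(1+\delta)\c_{0,0}$ by construction of the inducing scheme, hence to a definite Koebe neighbourhood of $\c_{0,k}^j$. Since $m_{F_0}=m_{\Psi_0}$ is $(\Psi_0-P_G(\Psi_0))$-conformal with $P_G(\Psi_0)=0$, for any measurable $A\subset \c_{0,k}^j$ inside the domain of $g$ we have $m_{F_0}(g(A))=\int_A e^{-\Psi_{0,k-1}(x)}~dm_{F_0}(x)$, so bounded distortion of $\Psi_{0,k-1}$ on the Koebe neighbourhood (controlled by $B_0=\exp(\sum_{j\ge 1}V_j(\Psi_0))$ via Remark~\ref{rmk:distn}) gives
\[
\frac{m_{F_0}\!\left((1+\eta)\c_{0,k}^j\right)}{m_{F_0}(\c_{0,k}^j)}
\le B_0\,\frac{m_{F_0}\!\left(g((1+\eta)\c_{0,k}^j)\right)}{m_{F_0}(\c_{0,1}^i)},
\]
provided $(1+\eta)\c_{0,k}^j$ stays inside the Koebe neighbourhood, which holds once $\eta$ is small relative to $\delta$.

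Next I would observe that $g$ is itself a diffeomorphism with bounded distortion, so $g((1+\eta)\c_{0,k}^j)\subset (1+\eta')\c_{0,1}^i$ for some $\eta'=\eta'(\eta)\to 0$ as $\eta\to 0$, uniformly in $k$ and $j$ (this uniformity is exactly what the diffeomorphic extension to $(1+\delta)\c_{0,0}$ and the resulting Koebe distortion bound buy us). Therefore it suffices to choose $\eta$ so small that $m_{F_0}((1+\eta')\c_{0,1}^i)\le (1+\eps/4)\,m_{F_0}(\c_{0,1}^i)$ for every $i$ with $1\le i\le i_0$. Since there are only finitely many such $i$ and each $m_{F_0}(\c_{0,1}^i)>0$ (as $m_{F_0}$ is a Gibbs, hence fully supported, conformal measure by Theorem~\ref{thm:BIP}), and since $m_{F_0}$ has no atoms on the boundary points $\bd\c_{0,1}^i$ — here I would use our standing choice (made just before Remark~\ref{rem:accum}) that $\c_{0,1}^{i_1}\cap\c_{0,1}^{i_2}=\es$, together with conformality to rule out atoms — the function $\eta'\mapsto m_{F_0}((1+\eta')\c_{0,1}^i)$ is continuous at $\eta'=0$ and decreases to $m_{F_0}(\c_{0,1}^i)$. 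Taking the minimum over the finitely many $i\le i_0$ of the $\eta$'s produced this way gives a single $\eta>0$ that works for all $k$, all admissible $j$, and all $i\le i_0$. The lower bound for $(1/(1+\eta))\c_{0,k}^j$ is entirely symmetric: pull back, use distortion $\ge 1/B_0$, and use continuity of $\eta'\mapsto m_{F_0}((1/(1+\eta'))\c_{0,1}^i)$ from below.

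The main obstacle is making the passage from scale $k$ to scale $1$ genuinely uniform in $k$ and in the cylinder $j$: one must be sure that a single small $\eta$ forces $(1+\eta)\c_{0,k}^j$ to remain inside the Koebe neighbourhood of $\c_{0,k}^j$ and that the induced enlargement factor $\eta'$ at scale $1$ is bounded by a function of $\eta$ independent of $k,j$. This is where the diffeomorphic extension to $(1+\delta)\c_{0,0}$ (fixed $\delta$, independent of everything) and the uniform distortion constant $B_0$ from Remark~\ref{rmk:distn} are essential; without them the estimate would degrade as $k\to\infty$. The rest — absence of atoms at cylinder boundaries and continuity of the measure of a slightly enlarged fixed cylinder — is routine given that $m_{F_0}$ is a conformal Gibbs measure for a potential with summable variations.
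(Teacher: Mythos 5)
Your proof is essentially correct and follows the same core reduction as the paper: use conformality of $m_{F_0}$ together with the uniform distortion bound $B_0$ to transfer the scale-$k$ fringe estimate to a fringe estimate at scale $1$ on $\c_{0,1}^i$ (or $\c_{0,0}$), then control that fringe over the finitely many $i\le i_0$. The two places where you diverge are worth flagging. First, for the scale-$1$ fringe control you invoke a soft continuity argument: since the $1$-cylinders are pairwise disjoint and their union carries full $m_{F_0}$-mass, $m_{F_0}(\bd \c_{0,1}^i)=0$ and hence $\eta'\mapsto m_{F_0}((1+\eta')\c_{0,1}^i)$ is right-continuous at $0$. (Your appeal to ``conformality to rule out atoms'' is a detour; the cleanest justification is simply $\sum_j m_{F_0}(\c_{0,1}^j)=1$, forcing $m_{F_0}(\c_{0,0}\sm\bigcup_j\c_{0,1}^j)=0$.) The paper instead makes this quantitative: it picks $s_1$ with $\kappa(s_1)\le\frac{\eps}{4}\min_{i\le i_0}m_{F_0}(\c_{0,1}^i)$ and then, using Remark~\ref{rem:accum} (that a $1$-cylinder's boundary is only approached by cylinders of large inducing time), chooses $\eta'$ so that the fringe $(1+B_0\eta')\c_{0,1}^i\sm\c_{0,1}^i$ only meets cylinders with $\tau_0\ge s_1$, hence has $m_{F_0}$-mass at most $\frac{\eps}{4}m_{F_0}(\c_{0,1}^i)$. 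This buys explicit, $\kappa$-controlled thresholds, which is precisely what makes the argument carry over uniformly in $n$ when the same scheme is rerun for $F_n$ in Claim~\ref{claim:fringes}; a purely topological continuity argument does not transfer to nearby maps as directly. Second, for the lower bound the paper pulls back through the full branch $F_0^k:\c_{0,k}^j\to\c_{0,0}$ rather than through $F_0^{k-1}$ to $\c_{0,1}^i$; this is a small simplification since the target is a single interval of measure $1$, so one only needs $m_{F_0}\bigl(\tfrac{1}{1+B_0\eta}\c_{0,0}\bigr)\ge\tfrac{1}{1+\eps/4}$ rather than a bound per $i\le i_0$. Your symmetric version via $F_0^{k-1}$ also works, it is just a touch less economical. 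Overall this is the paper's argument with a weaker (but for this lemma, sufficient) handle on the scale-$1$ fringe.
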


\begin{proof}
  Let $s_1\ge 1$ be \st
  $$\kappa(s_1) \le \frac\eps 4 \left(\min_{1\le i
  \le i_0}m_{F_0}(\c_{0,1}^i)\right).$$For the upper bound,
  let $\eta'>0$ be such that the set $\bigcup_{1
\le i\le i_0} (1+B_0\eta')\c_{0,1}^i \sm\c_{0,1}^i$ contains only
cylinders $\c_{0,1}^i$ with $\tau_0^i \ge s_1$ and is contained in
$\c_{0,0}$.  Then $m_{F_0}((1+B_0\eta') \c_{0,1}^i) \le
\left(1+\frac\eps4\right)m_{F_0}(\c_{0,1}^i)$ for $1\le i\le i_0$.

For $k>1$, we use distortion and conformality to reduce the problem
to the 1-cylinders' case just considered. Assume for $k\ge 1$ that
$F_0^{k-1}(\c_{0,k}^j) = \c_{0,1}^i$. Since $\left(1+\eta'
\right)\c_{0,k}^j$ is in the same $k-1$-cylinder as
$\c_{0,k}^j$, bounded distortion implies that
$$F_0^{k-1}\left(\left(1+\eta'\right) \c_{0,k}^j\right) \subset (1+B_0\eta')\c_{0,1}^i.$$
Using the conformal property of $m_{F_0}$ and bounded distortion we
have
$$\frac{m_{F_0}\left(\left(1+B_0\eta'\right)\c_{0,1}^i\right)
}{m_{F_0}\left(\c_{0,1}^i\right)} \ge \frac{\int_{\left(1+\eta'
\right)\c_{0,k}^j}
e^{-\Psi_{0,k-1}}~dm_{F_0}}{\int_{\c_{0,k}^j}e^{-\Psi_{0,k-1}}~dm_{F_0}}
\ge \frac1{B_0}\left( \frac{m_{F_0}\left(\left(1+\eta'\right)
\c_{0,k}^j \right)}{m_{F_0}(\c_{0,k}^j)}\right).$$ Hence, by the
choice of $\eta'$ above, we have
\[
\frac{m_{F_0}\left(\left(1+\eta'\right) \c_{0,k}^j
\right)}{m_{F_0}(\c_{0,k}^j)}\le
B_0\left(1+\frac\varepsilon4\right).
\]

For the lower bound, let $s_2\ge 1$ be such that
$\kappa(s_2)<\frac\eps8$.  Then we choose $0<\eta\le \eta'$ so that
the set $\c_{0,0} \sm\frac{\c_{0,0}}{1+B_0\eta}$ only contains
1-cylinders $\c_{0,1}^i$ with $\tau_0^i\ge s_2$.  This implies
$m_{F_0}\left(\left(\frac1{1+B_0\eta}\right)\c_{0,0}\right) \ge
1-\frac\eps8 >\frac1{\left(1+\frac\eps4\right)}$.

For $k>1$ we use the a distortion argument similar to the one above.
Bounded distortion implies that
$$F_0^{k}\left(\left(\frac1{1+\eta} \right) \c_{0,k}^j\right)
\supset\left(\frac1{1+B_0\eta}\right)\c_{0,0}.
   $$
Using the conformal property of $m_{F_0}$ and bounded distortion we
have
$$\frac{m_{F_0}\left(\left(\frac1{1+B_0\eta}\right)\c_{0,0}\right)
}{m_{F_0}\left(\c_{0,0}\right)} \le \frac{\int_{\left(\frac1{1+\eta}
\right) \c_{0,k}^j}
e^{-\Psi_{0,k}}~dm_{F_0}}{\int_{\c_{0,k}^j}e^{-\Psi_{0,k}}~dm_{F_0}}
\le B_0\left( \frac{m_{F_0}\left(\left(\frac1{1+\eta} \right)
\c_{0,k}^j \right)}{m_{F_0}(\c_{0,k}^j)}\right).$$ Hence, by the
choice of $\eta$ above, we have
\[
\frac{m_{F_0}\left(\left(\frac1{1+\eta} \right) \c_{0,k}^j
\right)}{m_{F_0}(\c_{0,k}^j)}\ge
\frac1{B_0\left(1+\frac\eps4\right)}.
\]
\end{proof}

Notice that the above proof can be used to show that
for the 1-cylinders considered above,
$$\frac{m_{F_0}\left(\c_{0,1}^j \sm \left(\frac1{1+\eta} \right)
\c_{0,1}^j\right)}{m_{F_0}(\c_{0,1}^j)} \le
\frac{B_0}{1+\frac\eps4}.$$

\begin{proposition}
For all $\eps>0$, $\lambda\in (0,1)$, $k_0 \ge 1$ and sequences
$(i_1,\ldots, i_{k_0})\in \N^{k_0}$ there exists $N_0\ge 1$ \st for
all $n\ge N_0$, $1\le k\le k_0$ and $1\le i\le i_k$, we have
$$\frac1{B_0^2(1+\eps)} \le
\frac{m_{F_n}(\c_{0,k}^i)}{e^{\Psi_{0,k}(x)}}
 \le B_0^2(1+\eps)$$ for all $x\in \lambda \c_{0,k}^j$.
\label{prop:Fn fringe}
\end{proposition}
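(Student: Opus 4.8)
The goal is to show that along a convergent sequence of inducing schemes, the conformal measures $m_{F_n}$ evaluated on a *fixed* (i.e.\ $f_0$-) cylinder $\c_{0,k}^i$ are comparable to $e^{\Psi_{0,k}(x)}$ with the Gibbs-type constant $B_0^2(1+\eps)$. The plan is to combine three ingredients: (i) the uniform Gibbs estimate for each $m_{F_n}$ on its *own* cylinders, $\frac1{B_0}\le m_{F_n}(\c_{n,k}^j)/e^{\Psi_{n,k}(x)}\le B_0$ (which is the $H_n=1$-free version of the estimate from Lemma~\ref{lem:H}, valid for the conformal measure $m_{F_n}$ since $P_G(\Psi_n)=0$); (ii) the convergence $\c_{n,k}^i\to\c_{0,k}^i$ and $F_n\to F_0$ from Proposition~\ref{prop:convergent ind}, together with weak$^*$ convergence $m_{F_n}\to m_{F_\infty}$ (the conformal limit); and (iii) the fringe estimate Lemma~\ref{lem:F0 fringe}, which controls the discrepancy between a cylinder and a slightly enlarged/shrunk version, needed because $\c_{0,k}^i$ is generally \emph{not} one of the $\c_{n,k}^j$.

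First I would fix $\eps,\lambda,k_0$ and the finite string $(i_1,\dots,i_{k_0})$, and apply Lemma~\ref{lem:matched} and the matching discussion on p.~\pageref{p:match} to find $i_0$ and $N_1$ so that all the finitely many cylinders $\c_{0,k}^i$ with $1\le k\le k_0$, $1\le i\le i_k$ are matched for $n\ge N_1$; call the matching counterpart $\c_{n,k}^i$, and recall $\c_{n,k}^i\to\c_{0,k}^i$ in the Hausdorff metric. Next I would choose, using Lemma~\ref{lem:F0 fringe}, an $\eta>0$ so that for every such cylinder, $m_{F_0}\big((1+\eta)\c_{0,k}^j\big)\le B_0(1+\tfrac\eps4)\,m_{F_0}(\c_{0,k}^j)$ and the matching lower bound for $\big(\tfrac1{1+\eta}\big)\c_{0,k}^j$; and I would shrink $\eta$ further so that $\lambda\c_{0,k}^i\Subset\big(\tfrac1{1+\eta}\big)\c_{0,k}^i$ — so that the reference point $x$ lies well inside. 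For $n$ large, Hausdorff convergence gives the inclusions $\big(\tfrac1{1+\eta}\big)\c_{0,k}^i\subset\c_{n,k}^i\subset(1+\eta)\c_{0,k}^i$. Then on the one hand, monotonicity of $m_{F_n}$ gives $m_{F_n}(\c_{0,k}^i)\le m_{F_n}\big((1+\eta)\c_{0,k}^i\big)$; on the other, applying the $m_{F_n}$-Gibbs bound to $\c_{n,k}^i$ (legitimate since $\c_{n,k}^i\in\P_{n,k}$) and the conformal relation $dm_{F_n}\circ F_n = e^{-\Psi_n}dm_{F_n}$ to push the slightly enlarged interval $(1+\eta)\c_{0,k}^i$ around $\c_{n,k}^i$ forward under $F_n^k$ — exactly the distortion-plus-conformality computation of Lemma~\ref{lem:F0 fringe}, but now for $F_n$ — I get $m_{F_n}\big((1+\eta)\c_{0,k}^i\big)\le B_0\cdot B_0\,(\text{fringe factor for }F_n)\,e^{\Psi_{n,k}(x_n)}$ for $x_n\in\c_{n,k}^i$. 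Since the fringe estimates of Lemma~\ref{lem:F0 fringe} were stated for a whole neighbourhood of $f_0$ (via the uniform $\kappa$), the fringe factor is $\le (1+\tfrac\eps4)$; and since $F_n\to F_0$ pointwise and $\Psi_{n,k}$ depends continuously (in $C^0$ on the relevant cylinders) on $f_n$ and on $P(\phi_{f_n,t})$, which converges by Proposition~\ref{prop:press cts}, we have $e^{\Psi_{n,k}(x_n)}\le(1+\tfrac\eps4)\,e^{\Psi_{0,k}(x)}$ for $n$ large and $x\in\lambda\c_{0,k}^i$. Multiplying the constants gives the upper bound $B_0^2(1+\eps)$; the lower bound is symmetric, using the shrunk interval $\big(\tfrac1{1+\eta}\big)\c_{0,k}^i\subset\c_{n,k}^i$ and the lower half of Lemma~\ref{lem:F0 fringe}. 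Taking $N_0$ to be the maximum of the finitely many thresholds produced above finishes the proof.

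The main obstacle, and the point that needs care, is the transfer of the fringe/distortion estimate from $F_0$ to $F_n$ \emph{uniformly in $n$}: the enlarged interval $(1+\eta)\c_{0,k}^i$ is not literally a $\big(1+\eta'\big)$-neighbourhood of the $F_n$-cylinder $\c_{n,k}^i$ with a controlled $\eta'$, so one must combine the Hausdorff convergence $\c_{n,k}^i\to\c_{0,k}^i$ with the uniform distortion constant $B_0$ and the uniform tail function $\kappa$ to guarantee that $(1+\eta)\c_{0,k}^i\subset(1+B_0\eta'')\c_{n,k}^i$ with $\eta''$ small enough that the $F_n$-fringe factor stays below $1+\tfrac\eps4$ — this is where Remark~\ref{rmk:distn} (uniform $B_k$) and Lemma~\ref{lem:uniform decay} (uniform $\kappa$) are essential. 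Everything else is bookkeeping of finitely many error terms, each controlled by a suitable enlargement of $N_0$.
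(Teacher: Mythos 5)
Your proposal is correct and follows essentially the same route as the paper's own proof: match the finitely many cylinders, establish an $F_n$-version of the fringe estimate of Lemma~\ref{lem:F0 fringe} (the paper's Claim~\ref{claim:fringes}), combine it with the conformal Gibbs bound for $m_{F_n}$, and then pass from $e^{\Psi_{n,k}}$ to $e^{\Psi_{0,k}}$ via $F_n\to F_0$ and Proposition~\ref{prop:press cts}. The only cosmetic difference is that you sandwich $\c_{0,k}^i$ between shrunk/enlarged copies of itself before transferring to $\c_{n,k}^i$, whereas the paper enlarges $\c_{n,k}^i$ directly so that $\c_{0,k}^i\subset(1+\eta)\c_{n,k}^i$; both are equivalent and you correctly identify the uniform transfer of the fringe estimate to $F_n$ as the genuine step.
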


\begin{proof}
The following claim is left to the reader.

\begin{claim}
For all $\eps>0$, $k_0 \ge 1$ and sequences $(i_1,\ldots, i_{k_0})\in
\N^{k_0}$ there exists $N_0\ge 1$ \st for all $n\ge N_0$, $1\le k\le
k_0$ and $1\le i\le i_k$,  $\c_{n,k}^i$ is matched.  Moreover, for
these cylinders, each set $F_n^{k-1}(\c_{n,k}^i)$ is matched.
\label{claim:matching}
\end{claim}

We next make the following claim.
\begin{claim} For all $\eps>0$, $k_0 \ge 1$ and
sequences $(i_1,\ldots, i_{k_0})\in \N^{k_0}$ there exist $\eta>0$ and $N_1\ge
N_0$ \st for all $n\ge N_1$, $1\le k\le k_0$ and $1\le i\le i_k$,
$$\frac{m_{F_n}((1+\eta)\c_{n,k}^i)}{m_{F_n}(\c_{n,k}^i)} \le B_0
\left(1+\frac\eps4\right) \hbox{ and } \frac{m_{F_n}\left(\left(
\frac1{1+\eta}\right)\c_{n,k}^i\right)}{m_{F_n}(\c_{n,k}^i)} \ge
\frac1{B_0\left(1+\frac\eps4\right)}.$$\label{claim:fringes}\end{claim}
\begin{proof} The proof of the claim is the same as for Lemma~
\ref{lem:F0 fringe} except that we need to take $F_n$ sufficiently
close to $F_0$ so that the cylinders $\c_{n,k}^i$ considered in
Lemma~\ref{lem:F0 fringe} have almost exactly the same properties as
 those $\c_{0,k}^i$ considered here.
\end{proof}

A simple consequence of these claims is that for all $\eps>0$,
$k_0 \ge 1$ and sequences $(i_1,\ldots,
i_{k_0})\in \N^{k_0}$ there exists $N_2\ge N_1$ \st for all $n\ge
N_2$, $1\le k\le k_0$ and $1\le i\le i_k$, $\c_{n,k}^i\in \P_{n,k}$
is matched and
$$\frac1{B_0\left(1+\frac\eps4\right)} \le
\frac{m_{F_n}(\c_{0,k}^i)} {m_{F_n}(\c_{n,k}^i)} \le
B_0\left(1+\frac\eps4\right).$$ Here we choose $N_2 \ge N_1$ so that
$\c_{0,k}^j \subset (1+\eta)\c_{n,k}^j$ and $\c_{n,k}^j \subset
(1+\eta)\c_{0,k}^j$ for all $\c_{n,k}^j$ as in the statement of the
proposition.

The Gibbs property for $m_{F_n}$, which follows directly from
conformality, means that $m_{F_n}(\c_{n,k}^i) =B_0^\pm
e^{\Psi_{n,k}(x)}$ for all $x\in \c_{n,k}^i$.  Now we can take $N_2$
so large that $$\frac1{\left(1+\frac\eps 4\right)} \le
e^{\Psi_{n,k}(x)-\Psi_{0,k}(x)}=
e^{\Phi_{F_n,k}(x)-\Phi_{F_0,k}(x)+\left(P(\phi_{f_n,t})-P(\phi_{f_0,t})\right)
\tau_{0,k}(x)}\le 1+\frac\eps4$$ for all $x\in \c_{n,k}^i\cap
\c_{0,k}^i$ for the cylinders $\c_{n,k}^i$ under consideration. This
follows since $\Phi_{F_n,k}(x) \to \Phi_{F_0,k}(x)$ as $n\to \infty$,
and by Proposition~\ref{prop:press cts}, $P(\phi_{f_n,t})\to
P(\phi_{f_0,t})$ as $n\to \infty$.  To complete the proof of the
proposition, we possibly increase $N_2$ again to ensure that
$\c_{n,k}^i\cap \c_{0,k}^i \subset \lambda \c_{0,k}^i$ for all the
cylinders we consider.
\end{proof}
Combining Lemma~\ref{lem:H} and Proposition~\ref{prop:Fn fringe} we
have that
 $\mu_{F_\infty}$ must have the Gibbs property with uniform
 constant. That is:
\begin{corollary}
\label{cor:gibbs-property-of-mu} For all $k$ and all $\c_{0, k}\in
\P_{0,k}$,
$$\frac1{H_{\F}B_0^2(1+\eps)} \le \frac{\mu_{F_\infty}(\c_{0,k}^i)}
{e^{\Psi_{0,k}(x)}} \le H_{\F}B_0^2(1+\eps),$$ for all $x\in
\c_{0,k}^i$.
\end{corollary}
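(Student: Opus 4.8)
The plan is to combine the uniform two‑sided bound on the invariant densities $\rho_{F_n}$ supplied by Lemma~\ref{lem:H} with the comparison between the conformal measures $m_{F_n}$ and the exponentials $e^{\Psi_{0,k}}$ supplied by Proposition~\ref{prop:Fn fringe}, and then to push everything through the weak$^*$ limit. First note that by Lemma~\ref{lem:H} and the argument in its proof, $\sup\rho_{F_n}\le B_0^2$; and since $\int\rho_{F_n}\,dm_{F_n}=1$ forces $\rho_{F_n}(x_2)\ge 1$ for some $x_2$ while $V_0(\log\rho_{F_n})\le 2\log B_0$, we also get $\inf\rho_{F_n}\ge B_0^{-2}$, both bounds being uniform in $n$. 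Hence $\mu_{F_n}(A)=(B_0^2)^{\pm}m_{F_n}(A)$ for every Borel set $A$, and recall $H_{\F}=B_0^4$.

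Now fix $\eps>0$, $k\ge 1$, a cylinder $\c_{0,k}^i\in\P_{0,k}$, a $\lambda\in(0,1)$ and a point $y\in\lambda\c_{0,k}^i$; for $n$ large $y$ also lies in the matched cylinder $\c_{n,k}^i$. By Proposition~\ref{prop:Fn fringe}, for all large $n$ we have $m_{F_n}(\c_{0,k}^i)=(B_0^2(1+\eps))^{\pm}e^{\Psi_{0,k}(y)}$, and therefore
\[
\mu_{F_n}(\c_{0,k}^i)=(H_{\F}(1+\eps))^{\pm}\,e^{\Psi_{0,k}(y)}\qquad\text{for all large }n.
\]
For the lower bound in the corollary one applies the standard characterisation of weak$^*$ convergence to the closed set $\c_{0,k}^i$, obtaining $\mu_{F_\infty}(\c_{0,k}^i)\ge\limsup_n\mu_{F_n}(\c_{0,k}^i)\ge(H_{\F}(1+\eps))^{-1}e^{\Psi_{0,k}(y)}$. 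For the upper bound, fix a small $\eta>0$ and apply the same characterisation to an open set wedged between $\c_{0,k}^i$ and $(1+\eta)\c_{0,k}^i$, obtaining $\mu_{F_\infty}(\c_{0,k}^i)\le\liminf_n\mu_{F_n}((1+\eta)\c_{0,k}^i)$; the fringe estimate $m_{F_n}((1+\eta)\c_{n,k}^i)\le B_0(1+\eps)m_{F_n}(\c_{n,k}^i)$ established inside the proof of Proposition~\ref{prop:Fn fringe}, together with the conformality Gibbs bound $m_{F_n}(\c_{n,k}^i)=B_0^{\pm}e^{\Psi_{n,k}(y)}$ and the convergence $\Psi_{n,k}(y)\to\Psi_{0,k}(y)$ used there, bounds the right‑hand side by $H_{\F}(1+\eps)e^{\Psi_{0,k}(y)}$ once $\eta$ is small and $n$ large (absorbing constants into $\eps$). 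Finally, since $\Psi_0$ has summable variations, $e^{\Psi_{0,k}(y)}=B_0^{\pm}e^{\Psi_{0,k}(x)}$ for every $x\in\c_{0,k}^i$, which upgrades the constant to the claimed $H_{\F}B_0^2(1+\eps)$. As $\eps>0$ was arbitrary this proves the corollary.

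The one genuinely delicate point is the passage to the weak$^*$ limit: because the cylinder sets are not open, one must guarantee that no mass escapes onto their boundaries. This is precisely where the particular choice of inducing scheme made just before Remark~\ref{rem:accum} enters: the boundary of every $1$-cylinder, and hence via $F_0^{k-1}$ of every $k$-cylinder, is accumulated by cylinders of arbitrarily large inducing time, which by the uniform tail bound of Lemma~\ref{lem:uniform decay} carry uniformly (in $n$) small $\mu_{F_n}$-mass. Consequently an arbitrarily thin neighbourhood of $\bd\c_{0,k}^i$ has $\mu_{F_n}$-measure uniformly close to $\mu_{F_n}(\c_{0,k}^i)$ --- equivalently $\mu_{F_\infty}(\bd\c_{0,k}^i)=0$, so that $\mu_{F_n}(\c_{0,k}^i)\to\mu_{F_\infty}(\c_{0,k}^i)$ --- which is what legitimises the enlargement step above. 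Apart from this point the proof is bookkeeping with the single distortion constant $B_0$.
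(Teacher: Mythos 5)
Your proof is correct and takes the route the paper intends: it combines the uniform two-sided bound on the densities $\rho_{F_n}$ (extracted from Lemma~\ref{lem:H}) with the comparison of $m_{F_n}(\c_{0,k}^i)$ to $e^{\Psi_{0,k}(x)}$ in Proposition~\ref{prop:Fn fringe}, which is exactly what the paper signals by saying the corollary follows from ``combining'' these two results. The one point the paper leaves implicit --- that the weak$^*$ limit actually determines $\mu_{F_\infty}(\c_{0,k}^i)$ despite cylinders being closed, not open --- you handle correctly by noting that the fringe estimates together with the uniform tail bound force $\mu_{F_\infty}(\bd\c_{0,k}^i)=0$, so $\c_{0,k}^i$ is a $\mu_{F_\infty}$-continuity set; your bookkeeping of the $B_0$ factors and the absorption of $(1+\eps)$ powers is consistent with the claimed constant $H_{\F}B_0^2(1+\eps)$ (and in fact proves something slightly sharper).
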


We will need the following lemma later.

\begin{lemma}\label{lem:meas-prox-cylin}
For all $\eps>0$ and  $i_0\ge 1$ there exists $N \ge 1$ \st
$n\ge N$ implies
$$\mu_{F_n}\left(\bigcup_{i=0}^{i_0}\left( \c_{n,1}^i \triangle
 \c_{0,1}^i\right)\right) \le \eps.$$
\label{lem:cyls coincide}
\end{lemma}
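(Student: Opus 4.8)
The plan is to fix a large integer $M$ with $\kappa(M)<\eps/2$, where $\kappa$ is the uniform tail function from the consequence of Lemma~\ref{lem:uniform decay} (so $\mu_{F_n}\{\tau_n>M\}\le\kappa(M)$ for all large $n$, and $\kappa(M)\to0$ as $M\to\infty$), and then to show that for all large $n$ each of the two pieces $\c_{n,1}^i\sm\c_{0,1}^i$ and $\c_{0,1}^i\sm\c_{n,1}^i$, $i\le i_0$, is absorbed, as far as $\mu_{F_n}$ is concerned, into $\{\tau_n>M\}=\bigcup_{j:\tau_n^j>M}\c_{n,1}^j$ -- the first only after applying $F_n^{-1}$, the second directly. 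Combined with $F_n$-invariance of $\mu_{F_n}$ this yields $\mu_{F_n}\big(\bigcup_{i\le i_0}(\c_{n,1}^i\triangle\c_{0,1}^i)\big)\le 2\kappa(M)<\eps$.

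First I would establish a boundary estimate for the base $\c_{0,0}$: for every $r$ only finitely many $1$-cylinders have inducing time $\le r$, and by our choice of inducing schemes no $1$-cylinder has a boundary point isolated from the others, so the $1$-cylinders accumulate on $\bd\c_{0,0}$; consequently only cylinders with arbitrarily large inducing time accumulate there, and there is $\eta_0=\eta_0(M)>0$ with $\bigcup_{j:\tau_0^j\le M}\c_{0,1}^j\subset\frac1{1+2\eta_0}\c_{0,0}$. Since each of these finitely many cylinders is matched for $n$ large and, by Lemma~\ref{lem:matched}, every $1$-cylinder of $F_n$ with $\tau_n^j\le M$ is one of these matchings, and since $\c_{n,0}\to\c_{0,0}$, I would deduce that for all large $n$, $\c_{n,0}\sm\frac1{1+\eta_0}\c_{n,0}\subset\bigcup_{j:\tau_n^j>M}\c_{n,1}^j$, hence $\mu_{F_n}\big(\c_{n,0}\sm\frac1{1+\eta_0}\c_{n,0}\big)\le\kappa(M)$.

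Now fix $i\le i_0$ and let $\rho_n$ be the Hausdorff distance between $\c_{n,1}^i$ and $\c_{0,1}^i$, so $\rho_n\to0$ while $|\c_{n,1}^i|$ is bounded below; then both differences lie within $2\rho_n$ of $\bd\c_{n,1}^i$. For $\c_{0,1}^i\sm\c_{n,1}^i$ I would use Remark~\ref{rem:accum}: choosing $\eta>0$ there for our $M$, uniformly over the finitely many $i\le i_0$, for $n$ large this set is contained in $(1+\eta)\c_{n,1}^i\sm\c_{n,1}^i$, which meets only $1$-cylinders with $\tau_n>M$; hence these sets, and their union over $i\le i_0$, have $\mu_{F_n}$-measure $\le\kappa(M)$. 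For $\c_{n,1}^i\sm\c_{0,1}^i$, which lies in a boundary neighbourhood of the $1$-cylinder $\c_{n,1}^i$ of relative width $O(\rho_n/|\c_{n,1}^i|)\to0$, I would push it forward by $F_n=f_n^{\tau_n^i}$, which maps $\c_{n,1}^i$ homeomorphically onto $\c_{n,0}$ with uniformly bounded distortion $B_0$: for $n$ large $F_n(\c_{n,1}^i\sm\c_{0,1}^i)\subset\c_{n,0}\sm\frac1{1+\eta_0}\c_{n,0}$, so $\c_{n,1}^i\sm\c_{0,1}^i\subset F_n^{-1}\{\tau_n>M\}$. As the $\c_{n,1}^i$ are disjoint, $\bigcup_{i\le i_0}(\c_{n,1}^i\sm\c_{0,1}^i)\subset F_n^{-1}\{\tau_n>M\}$, so by $F_n$-invariance of $\mu_{F_n}$ its measure is $\le\kappa(M)$; adding the two contributions completes the proof.

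I expect the main obstacle to be the piece $\c_{n,1}^i\sm\c_{0,1}^i$: it sits inside a $1$-cylinder $\c_{n,1}^i$ with small inducing time, hence of non-negligible $\mu_{F_n}$-mass, so it cannot be bounded by the measure of a cylinder, nor compared to the corresponding $f_0$-cylinders without circularity. The essential trick is to transport it by $F_n$ onto the boundary region of the base $\c_{n,0}$, where the uniform tail estimate of Lemma~\ref{lem:uniform decay} applies -- which is precisely why one has to show that inducing times blow up near $\bd\c_{0,0}$ and that the small-inducing-time cylinders are matched for all large $n$.
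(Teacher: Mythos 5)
Your proof is correct, but it takes a genuinely different route from the paper's. The paper proves the lemma as a quick corollary of what it has already set up: Lemma~\ref{lem:F0 fringe}, the paragraph following it, and Claim~\ref{claim:fringes} in the proof of Proposition~\ref{prop:Fn fringe} give that the conformal measures $m_{F_n}$ of the fringes $(1+\eta)\c_{n,1}^i\sm\c_{n,1}^i$ and $\c_{n,1}^i\sm\frac{1}{1+\eta}\c_{n,1}^i$ are smaller than $\eps/(i_0 H_{\F})$, the symmetric differences are inside these fringes for $n$ large, and then the uniform density bound $H_{\F}$ from Lemma~\ref{lem:H} converts the $m_{F_n}$-estimate to a $\mu_{F_n}$-estimate. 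Your argument works directly with $\mu_{F_n}$, never touching the conformal measures: you split the symmetric difference into $\c_{0,1}^i\sm\c_{n,1}^i$, which Remark~\ref{rem:accum} places inside the tail set $\{\tau_n>M\}$ immediately, and $\c_{n,1}^i\sm\c_{0,1}^i$, which you push forward by the bijection $F_n|_{\c_{n,1}^i}$ with bounded distortion so that its image lies in a boundary neighbourhood of $\c_{n,0}$ covered (up to a $\mu_{F_n}$-null set) by the tail; $F_n$-invariance of $\mu_{F_n}$ then converts the image estimate into one on $\c_{n,1}^i\sm\c_{0,1}^i$ itself. Both routes ultimately rest on the uniform tail decay from Lemma~\ref{lem:uniform decay}, bounded distortion, and matching of cylinders; the paper's is the shorter write-up given the fringe lemmas are already needed for Corollary~\ref{cor:gibbs-property-of-mu}, while yours is more self-contained, has the advantage of bypassing $m_{F_n}$ and $\rho_{F_n}$ entirely, and correctly identifies the real difficulty as the piece $\c_{n,1}^i\sm\c_{0,1}^i$ sitting inside a cylinder of non-negligible mass, which you resolve with the push-forward trick. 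One small point worth tightening if you write this up: $\c_{n,0}\sm\frac{1}{1+\eta_0}\c_{n,0}$ is not literally contained in $\bigcup_{j:\tau_n^j>M}\c_{n,1}^j$ (the $1$-cylinders do not cover $\c_{n,0}$), only up to a $\mu_{F_n}$-null set; so the conclusion should be stated via $\mu_{F_n}\bigl(F_n^{-1}\bigl(\c_{n,0}\sm\frac{1}{1+\eta_0}\c_{n,0}\bigr)\bigr)=\mu_{F_n}\bigl(\c_{n,0}\sm\frac{1}{1+\eta_0}\c_{n,0}\bigr)\le\kappa(M)$ rather than as a literal set containment into $F_n^{-1}\{\tau_n>M\}$.
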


\begin{proof}
Combining the arguments in the proof of  Lemma~\ref{lem:F0 fringe},
the paragraph following it and Claim~\ref{claim:fringes} in the proof
of Proposition~\ref{prop:Fn fringe} we have $\eta>0$, $i_0\ge 1$ and
$N' \ge 1$ \st for $n \ge N'$,
$$m_{F_n}\left((1+\eta)\c_{n,k}^i\sm\c_{n,k}^i\right), \
m_{F_n}\left(\c_{n,k}^i\sm\frac{\c_{n,k}^i}{(1+\eta)}\right)<\frac\eps{i_0H_{\F}}$$
for all  $1 \le i\le i_0$.  Recall that $H_{\F}$ is the constant from
Lemma~\ref{lem:H}. Moreover, there exists $N \ge N'$ \st $n \ge N$
implies $$\c_{n,1}^i \triangle \c_{0,1}^i \subset
(1+\eta)\c_{n,k}^i\sm \frac{\c_{n,k}^i}{(1+\eta)}$$ for all $1\le
i\le i_0$.  Therefore, $n\ge N$ implies
$$m_{F_n}\left(\bigcup_{i=0}^{i_0}\left( \c_{n,1}^i \triangle
\c_{0,1}^i\right)\right) \le \frac\eps{H_{\F}}.$$  The lemma follows
from Lemma~\ref{lem:H}, substituting $\mu_{F_n} (=\rho_{F_n}m_{F_n})$
for $m_{F_n}$ in the above equation.
\end{proof}

We finish this section by proving Proposition~\ref{prop:press cts},
which was essential in the proof of Proposition~\ref{prop:Fn
fringe}.

\begin{proof}[Proof of Proposition~\ref{prop:press cts}]
Observe that for $t=1$ there is nothing to prove since
$P(\phi_{f,1})=0$ for all $f\in \F$.

Let $\eps>0$.  We fix $t< 1$ as in the statement of the proposition,
since the proof for $t>1$ (which we need only consider when
\eqref{eq:CE} holds) follows similarly.  For ease of notation, we let
$P_n:=P(\phi_{f_n,t})$. For $S\in \R$, we define
$\psi_n^S:=\phi_{f_n,t}-S$.  Likewise, for $k\ge 1$ the corresponding
induced potentials are  $\Psi_{n,k}^S:= \Phi_{n,k}-S\tau_{n,k}$.

We choose $\c_{0,1}^i$ and $n_1\ge 1$ so that for all $n\ge n_1$,
this cylinder is matched.  Recall that we can write
$$P_G\left(\Psi_n^{P_n}\right) = P_G\left(\Psi_{n,1}^{P_n},
\c_{n,1}^i\right)= \lim_{k\to \infty}\frac1k \log
Z_k\left(\Psi_{n,1}^{P_n}, \c_{n,1}^i\right)=0,$$
where $Z_k(\Psi_{n,1}^{P_n}, \c_{n,1}^i)= \sum_{x\in \c_{n,1}^i, \
F_n^k(x)=x}e^{\Psi_{n,k}^{P_n}(x)}$.  The idea of this proof is to
use the fact that $S=P_n$ is the unique value so that
$P_G(\Psi_{n,1}^S, \c_{n,1}^i)=0$.  We show that since $\frac1k\log
Z_k(\Psi_0^{P_n}, \c_{0,1}^i)$ and $\frac1k\log Z_k(\Psi_n^{P_n},
\c_{n,1}^i)$ are close to each other for all large $n$, with the
latter value converging to 0 as $k\to \infty$, then $P_0$ and $P_n$
must also be close.

We first show that the convergence of $\frac1k\log Z_k(\Psi_n^{P_n},
\c_{n,1}^i)$ to 0 as $k\to \infty$ is essentially uniform in $n$. The
Gibbs property together with Lemma~\ref{lem:H} imply that for a
 cylinder $\c_{n,k}\in \P_{n,k}$ we have
 $\mu_{\Psi_n}^{P_n}(\c_{n,k})=
  \H_{\F}^{\pm}e^{\Psi_{n,k}(x)}$ for any $x\in \c_{n,k}$,  Since
  each $k$-cylinder
contains a unique $k$-periodic point, it follows that
$$Z_k\left(\Psi_{n,1}^{P_n}, \c_{n,1}^i\right) = H_{\F}^\pm
\sum_{\c_{n,k}\in \P_{n,k},\\ \c_{n,k}\subset \c_{n,1}}
\mu_{\Psi_n^{P_n}}(\c_{n,k}) = H_{\F}^\pm
\mu_{\Psi_n^{P_n}}(\c_{n,1}^i).$$  We now show that
$\mu_{\Psi_n^{P_n}}(\c_{n,1})$ is uniformly bounded above and below
for all large $n$. Again using the Gibbs property we have
$$\frac{1}{H_{\F}} \le
\frac{\mu_{\Psi_n^{P_n}}(\c_{n,1}^i)}{e^{\Phi_n(x)-P_n\tau_n(x)}} \le
H_{\F}$$ for any $x\in \c_{n,1}^i$.  Since there exists a uniform
$K_{\F}\in \R$ so that for all $n\ge 0$, $P_n\in [0,K_{\F}]$;
$\c_{n,1}^i$ is matched (recall the definition on \pageref{p:match}), for all $n\ge n_1$; and $\Phi_n$ converges to
$\Phi_0$ on $\c_{n,1}^i\cap \c_{0,1}^i$, there exists $n_2\ge n_1$ so
that $n=0$ or $n\ge n_2$ implies
$$\frac{e^{-K_{\F}\tau_0(x)}}{H_{\F}(1+\eps)} \le
\frac{\mu_{\Psi_n^{P_n}}(\c_{n,1}^i)}{e^{\Phi_0(x)}} \le
H_{\F}(1+\eps)$$ for any $x\in \c_{0,1}^i$. Combining the above
computations we get, for $n=0$ or $n\ge n_2$,
\begin{equation}\frac{e^{\Phi_0(x)-K_{\F}\tau_0(x)}}{H_{\F}^2(1+\eps)}\le
Z_k \left(\Psi_{n,1}^{P_n}, \c_{n,1}^i\right)\le
H_{\F}^2(1+\eps)e^{\Phi_0(x)}\label{eq:decay of Z_k}\end{equation}
for any $x\in \c_{0,1}^i$.

Therefore for any $\delta>0$ there exists a uniform $k=k(\delta)\ge
1$ so that $$\left|\frac1k\log Z_k\left(\Psi_{n,1}^{P_n},
\c_{n,1}^i\right) \right|<\delta$$ for $n=0$ and all $n\ge n_2$. We
fix $\delta=\frac\eps 5$.

In order to be able to prove this proposition using only a finite
amount of information, we define
$$Z_k\left(\Psi_{n,1}^{P_n}, \c_{n,1}^i,N\right):=
\sum_{x\in \c_{n,1}^i, \ F^k(x)=x,\ \tau_{n,k}(x)\le N}e^{\Psi_{n,k}^
{P_n}(x)}.$$ By the Gibbs property, the quantity
$Z_k(\Psi_{n,1}^{P_n}, \c_{n,1}^i)-Z_k(\Psi_{n,1}^{P_n},
\c_{n,1}^i,N)$ is, up to a constant, the measure of the `tail set for
$F_n^k$', i.e. $\mu_{\Psi_n^{P_n}}\{\tau_{n,k}>N\}$. It is easy to
see that estimating this last quantity can be reduced to estimating
$\mu_{\Psi_n^{P_n}}\{\tau_{n}>N\}$ (see for example
\cite[Lemma~9.7]{ACF}). Therefore the following claim follows for the
same reasons as in Lemma~\ref{lem:uniform decay}. (Recall here that
we are not dealing with the case $t=1$, so even if we only assume
that \eqref{eq:poly} holds on $\F$, since we are considering $t<1$,
we have exponential tails.) In contrast to the notation in the rest
of this proof, here, for clarity, we emphasise the role of $\F$ and
$t$.

\begin{claim}
There exist $C_{\F,t}, \alpha_{\F,t}>0$ so that for $n=0$ or $n\ge
n_2$,  and for all $k\ge 0$, $$Z_k\left(\Psi_{n,1}^{P_n},
\c_{n,1}^i\right)-Z_k\left(\Psi_{n,1}^{P_n}, \c_{n,1}^i,N\right) \le
C_{\F,t}e^{-\alpha_{\F,t} N}.$$
\end{claim}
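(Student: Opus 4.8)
The plan is to show that $Z_k(\Psi_{n,1}^{P_n},\c_{n,1}^i) - Z_k(\Psi_{n,1}^{P_n},\c_{n,1}^i,N)$ is, up to the Gibbs constant, bounded by $\mu_{\Psi_n^{P_n}}\{\tau_{n,k}>N\}$, and then reduce this to the one-step tail bound $\mu_{\Psi_n^{P_n}}\{\tau_n>N\}$, which decays exponentially uniformly in $n$ by Lemma~\ref{lem:uniform decay}. First I would observe that each $k$-cylinder $\c_{n,k}\in \P_{n,k}$ contains a unique $k$-periodic point $x$ of $F_n$, and by the Gibbs property together with Lemma~\ref{lem:H} we have $\mu_{\Psi_n^{P_n}}(\c_{n,k}) = H_{\F}^{\pm}e^{\Psi_{n,k}^{P_n}(x)}$, exactly as was used earlier in the proof of Proposition~\ref{prop:press cts}. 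Hence
\[
Z_k\left(\Psi_{n,1}^{P_n}, \c_{n,1}^i\right)-Z_k\left(\Psi_{n,1}^{P_n}, \c_{n,1}^i,N\right) = \sum_{\substack{\c_{n,k}\subset \c_{n,1}^i \\ \tau_{n,k}>N}} e^{\Psi_{n,k}^{P_n}(x_{\c_{n,k}})} \le H_{\F}\sum_{\substack{\c_{n,k}\subset \c_{n,1}^i \\ \tau_{n,k}>N}} \mu_{\Psi_n^{P_n}}(\c_{n,k}) \le H_{\F}\,\mu_{\Psi_n^{P_n}}\{\tau_{n,k}>N\}.
\]

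Next I would reduce the $k$-step tail to the one-step tail. The total inducing time $\tau_{n,k}$ is the sum of $k$ copies of $\tau_n$ along the $F_n$-orbit, so $\{\tau_{n,k}>N\}$ is contained in the union over $m=1,\dots,k$ of the events that the $m$-th return time exceeds $N/k$ (or, more efficiently, one uses a large deviation / convolution estimate: if the one-step tail is $\le C'e^{-\alpha' s}$ then, since $\mu_{F_n}$ is $F_n$-invariant, the distribution of $\tau_{n,k}$ is the $k$-fold convolution of that of $\tau_n$, and its tail beyond $N$ is again exponentially small in $N$ with a rate $\alpha_{\F,t}$ depending only on $\alpha'$, uniformly in $k$). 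This is a standard computation — exactly the content of the cited \cite[Lemma~9.7]{ACF} — and I would simply invoke that, being careful that the constant $C_{\F,t}$ absorbs the factor $H_{\F}$ and any $k$-independent combinatorial loss, and that the exponential rate $\alpha_{\F,t}$ is strictly positive.

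The only subtlety — and the main point to get right — is that the one-step bound $\mu_{\Psi_n^{P_n}}\{\tau_n>N\}\le C'e^{-\alpha' N}$ with $C',\alpha'$ \emph{uniform in $n$} must hold for the potential $\psi_{n,t}$ with $t<1$, not just for the acip potential $\psi_{n,1}$. This is precisely why the parenthetical remark in the statement stresses that we are in the case $t<1$: Lemma~\ref{lem:uniform decay} gives a uniform exponential tail for $\mu_{F_n}$ (the $\Psi_{n}$-equilibrium state) whenever the relevant potential forces exponential decay, which happens for $t<1$ even under the polynomial growth hypothesis \eqref{eq:poly}, and certainly for $t>1$ under \eqref{eq:CE}. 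So I would cite Lemma~\ref{lem:uniform decay} to obtain the uniform one-step tail for $n=0$ and all $n\ge n_2$, then feed it through the convolution estimate above. The hard part is really bookkeeping: ensuring that all constants ($H_{\F}$, the uniform tail constants from Lemma~\ref{lem:uniform decay}, the convolution loss) can be combined into a single pair $C_{\F,t},\alpha_{\F,t}$ that works simultaneously for every $k\ge 0$ and every $n\in\{0\}\cup\{n\ge n_2\}$; there is no genuinely new analytic difficulty beyond what Lemma~\ref{lem:uniform decay} already supplies.
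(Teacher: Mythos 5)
Your proposal reproduces exactly the paper's argument: first use the Gibbs property (with the uniform constant $H_\F$ from Lemma~\ref{lem:H}) to identify $Z_k(\Psi_{n,1}^{P_n},\c_{n,1}^i)-Z_k(\Psi_{n,1}^{P_n},\c_{n,1}^i,N)$ with $\mu_{\Psi_n^{P_n}}\{\tau_{n,k}>N\}$ up to a bounded factor, then reduce the $k$-step tail to the one-step tail via \cite[Lemma~9.7]{ACF}, and finally invoke the uniform exponential decay of $\mu_{F_n}\{\tau_n>N\}$ from Lemma~\ref{lem:uniform decay}, noting that $t<1$ guarantees exponential (rather than merely polynomial) tails even under \eqref{eq:poly}. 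The paper states these steps more tersely, but the content and the citations are the same, so this is essentially the paper's proof.
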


Adding \eqref{eq:decay of Z_k} and the claim together, we can fix
$N=N(\delta,n_1)\ge 1$ so that for all $n\ge n_2$, \begin{equation}
e^{-3\delta k}\le \frac{ Z_k\left(\Psi_{n,1}^{P_n},
\c_{n,1}^i,N\right)}{Z_k\left(\Psi_{0,1}^{P_0},
\c_{0,1}^i,N\right)}\le e^{3\delta k}.\label{eq:trunk
Z_k}\end{equation}

Now since $\Psi_{n,k}^{S}$ on the set $\{\tau_{0,k}\le N\}$
essentially converges to $\Psi_{0,k}^{S}$ as $n\to \infty$, there
exists $n_3\ge n_2$ so that $n\ge n_3$ implies $e^{-k\delta} \le
\frac{Z_k\left(\Psi_{0,1}^{P_n},
\c_{0,1}^i,N\right)}{Z_k\left(\Psi_{n,1}^{P_n},
\c_{n,1}^i,N\right)}\le e^{k\delta}$, and hence
$$e^{-4\delta k}\le \frac{ Z_k\left(\Psi_{0,1}^{P_n},
\c_{0,1}^i,N\right)} {Z_k\left(\Psi_{0,1}^{P_0},
\c_{0,1}^i,N\right)}\le e^{4\delta k}.$$ But $\tau_{0,k} \ge k$, so
by the definition of $Z_k\left(\Psi_{0,1}^{S}, \c_{0,1}^i,N\right)$,
the above inequalities imply that $|P_n-P_0|$ must be less than or
equal to $4\delta<\eps$ as required.
\end{proof}

\section{Invariance of the weak$^*$ limit}
\label{sec:invariance-weak-limit}

We may assume, as in the beginning of
Section~\ref{sec:gibbs-property}, that $\mu_{F_\infty}$ is the
weak$^*$ limit of the sequence $(\mu_{F_n})_n$. In the
previous section we saw that $\mu_{F_\infty}$ is Gibbs. The purpose
of this section is to show that $\mu_{F_\infty}$ is $F_0$-invariant.
Before that, we prove the following technical lemma that will be
useful in the remaining arguments.
\begin{lemma}
\label{lem:indicator-convergence} For all $i\in\N$ and every
continuous $g:C_{0,1}^i\to\R$ we have
\[
\int g.\I_{C_{0,1}^i}~d\mu_{F_n}\rightarrow\int
g.\I_{C_{0,1}^i}~d\mu_{F_\infty}.
\]
\end{lemma}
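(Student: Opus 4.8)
The plan is to reduce the statement to the weak$^*$ convergence $\mu_{F_n}\to\mu_{F_\infty}$ plus one extra fact, namely that $\mu_{F_\infty}$ gives zero mass to $\partial\c_{0,1}^i$. First I would extend the given continuous $g:\c_{0,1}^i\to\R$ to a continuous $\tilde g:I\to\R$ with the same supremum norm (by Tietze's theorem, or simply by extending $g$ by its values at the two endpoints of $\c_{0,1}^i$). Since $\tilde g\cdot\I_{\c_{0,1}^i}=g\cdot\I_{\c_{0,1}^i}$ as functions on $I$, it is enough to show
\[
\int\tilde g\,\I_{\c_{0,1}^i}~d\mu_{F_n}\longrightarrow\int\tilde g\,\I_{\c_{0,1}^i}~d\mu_{F_\infty}.
\]
Now $\tilde g\,\I_{\c_{0,1}^i}$ is bounded and its set of discontinuities is contained in $\partial\c_{0,1}^i$, so once we know $\mu_{F_\infty}(\partial\c_{0,1}^i)=0$ this convergence follows from the standard strengthening of the Portmanteau theorem (if $\nu_n\to\nu$ weak$^*$ and $h$ is bounded and $\nu$-a.e. continuous, then $\int h\,d\nu_n\to\int h\,d\nu$). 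If one prefers to stay strictly inside the paper's toolkit, the same conclusion is obtained by squeezing $\I_{\c_{0,1}^i}$ between continuous functions $u\le\I_{\c_{0,1}^i}\le v$ with $v-u$ supported in a thin collar $(1+\eta)\c_{0,1}^i\sm\frac1{1+\eta}\c_{0,1}^i$, applying weak$^*$ convergence to $\tilde g u$ and to $\tilde g v$, and letting $\eta\to0$.

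So the real work is the claim $\mu_{F_\infty}(\partial\c_{0,1}^i)=0$, and here I would use the uniform-in-$n$ control of the $\mu_{F_n}$-mass of a thin collar of $\c_{0,1}^i$ that is already available. Fix $\eps>0$. Exactly the estimates used to prove Lemma~\ref{lem:meas-prox-cylin} — the fringe bounds of Lemma~\ref{lem:F0 fringe}, the remark following it and Claim~\ref{claim:fringes}, together with the bound $d\mu_{F_n}/dm_{F_n}\le H_{\F}$ from Lemma~\ref{lem:H} — yield $\eta>0$ and $N$ with
\[
\mu_{F_n}\!\left((1+\eta)\c_{n,1}^i\sm\tfrac1{1+\eta}\c_{n,1}^i\right)<\eps\qquad\text{for all }n\ge N,
\]
the mechanism being that this collar meets only $1$-cylinders of $F_n$ with large inducing time, whose total $m_{F_n}$-measure is uniformly small by Lemma~\ref{lem:uniform decay}. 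Since $\c_{n,1}^i\to\c_{0,1}^i$ in the Hausdorff metric (Proposition~\ref{prop:convergent ind}), after enlarging $N$ we may assume $(1+\tfrac\eta2)\c_{0,1}^i\subset(1+\eta)\c_{n,1}^i$ and $\frac1{1+\eta}\c_{n,1}^i\subset\frac1{1+\eta/2}\c_{0,1}^i$ for all $n\ge N$, so that the fixed collar $E:=(1+\tfrac\eta2)\c_{0,1}^i\sm\frac1{1+\eta/2}\c_{0,1}^i$ satisfies $\mu_{F_n}(E)<\eps$ for all $n\ge N$ as well. Let $U$ be the interior of $E$ in $I$; then $U$ is open and contains $\partial\c_{0,1}^i$, so by the Portmanteau theorem $\mu_{F_\infty}(\partial\c_{0,1}^i)\le\mu_{F_\infty}(U)\le\liminf_n\mu_{F_n}(U)\le\eps$. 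Since $\eps>0$ is arbitrary, $\mu_{F_\infty}(\partial\c_{0,1}^i)=0$, which completes the argument.

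The main obstacle is precisely this last step: a direct appeal to weak$^*$ convergence is impossible because $g\,\I_{\c_{0,1}^i}$ is discontinuous across $\partial\c_{0,1}^i$, and the difficulty is compounded by the fact that the cylinders $\c_{n,1}^i$ move with $n$, so there is no single fixed discontinuity set to work with; the resolution is to transfer the uniform collar/tail estimates of Section~\ref{sec:gibbs-property} (Lemmas~\ref{lem:uniform decay}, \ref{lem:F0 fringe}, \ref{lem:H} and Claim~\ref{claim:fringes}) across the Hausdorff convergence of the cylinders and then invoke Portmanteau. I note in passing that $\mu_{F_\infty}(\partial\c_{0,1}^i)=0$ can also be seen directly, since each $\mu_{F_n}$, being a Gibbs measure for a full shift, is non-atomic, so Portmanteau applied to the closed singletons of $\partial\c_{0,1}^i$ gives the same conclusion; but the collar argument is the one that meshes with the rest of this section.
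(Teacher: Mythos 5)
Your overall reduction — weak$^*$ convergence plus the key fact $\mu_{F_\infty}(\partial\c_{0,1}^i)=0$ — is exactly the paper's strategy, and the squeeze by continuous functions on a thin collar is essentially the same device as the paper's explicit continuous function $h$ supported on a $1/k$-neighbourhood of $\overline{\c_{0,1}^i}$. Where you diverge is in establishing $\mu_{F_\infty}(\partial\c_{0,1}^i)=0$: you rebuild the uniform fringe/tail estimates and push them through Portmanteau, which works but is a long detour. The paper simply invokes Corollary~\ref{cor:gibbs-property-of-mu}: by that point $\mu_{F_\infty}$ has already been shown to satisfy the Gibbs bounds on all cylinders $\c_{0,k}$, which forces it to be non-atomic, and $\partial\c_{0,1}^i$ is a two-point set. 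So nothing new from Section~\ref{sec:gibbs-property} is needed; one sentence suffices. Your collar argument is correct (and has some independent interest, since it shows the boundary-mass fact without using the Gibbs property of the limit) but costs far more than the intended one-liner.

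One thing to correct: the ``note in passing'' at the end is wrong as stated. You claim that because each $\mu_{F_n}$ is non-atomic, Portmanteau on the closed singletons of $\partial\c_{0,1}^i$ yields $\mu_{F_\infty}(\partial\c_{0,1}^i)=0$. But the Portmanteau inequality for a closed set $F$ is $\limsup_n\mu_{F_n}(F)\le\mu_{F_\infty}(F)$; applied to a singleton it gives $0\le\mu_{F_\infty}(\{x\})$, which is vacuous — it does not transfer non-atomicity from the $\mu_{F_n}$ to the limit. (Indeed non-atomicity is not preserved under weak$^*$ limits in general.) The correct ``direct'' route is the one the paper uses: appeal to the already-established Gibbs property of $\mu_{F_\infty}$ itself, not of the approximants.
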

\begin{proof} We can extend $g$ continuously to $\partial
C_{0,1}^i$,
and for every $x\in I\setminus \overline{C_{0,1}^i}$, define
$\text{b}^i(x)$ as the point of $\partial C_{0,1}^i$ closest to $x$.

Observing that $g=g^+-g^-$, where $g^+(x)=\max\{0,g(x)\}\ge0$ and
$g^-(x)=\max\{0,-g(x)\}\ge0$, we may assume without loss of
generality that $g\ge0$. Also, since, by
Corollary~\ref{cor:gibbs-property-of-mu}, $\mu_{F_\infty}$ is a Gibbs
measure, we have $\mu_{F_\infty}(\partial C_{0,1}^i)=0$, which
implies that $\int_{\overline{C_{0,1}^i}\setminus \partial
C_{0,1}^i}g~d\mu_{F_\infty}=\int_{\overline{C_{0,1}^i}}gd\mu_{F_\infty}=
\int_{C_{0,1}^i}g~d\mu_{F_\infty}$.

Let $U_k=\{x\in I: \mbox{dist}(x,\overline{C_{0,1}^i})<1/k\}$.
Clearly $U_k$ is an open neighbourhood of $\overline{C_{0,1}^i}$ and
by the regularity of $\mu_{F_\infty}$ it follows that
$\mu_{F_\infty}(U_k\setminus \overline{C_{0,1}^i})=\epsilon(k)\to 0$
as $k\to\infty$. Define $h:I\to\R$ as
\[
h(x)=\begin{cases}
  0&\text{if $x\notin U_k$}\\
  g(\text{b}^i(x))\frac{d(x,I\setminus U_k)}{d(x,I\setminus U_k)+
  d(x,\overline{C_{0,1}^i})}&
  \text{if $x\in U_k\setminus\overline{C_{0,1}^i}$}\\
  g(x)& \text{if $x\in \overline{C_{0,1}^i}$}
\end{cases}.
\]
Notice that $h$ is continuous and, for every $x\in I$, we have
$g(x)\I_{C_{0,1}^i}(x)\le h(x)\le \max_{x\in
\overline{C_{0,1}^i}}g(x)$ and $h(x)-g(x)\I_{C_{0,1}^i}(x)>0$ only if
$x\in U_k\setminus\overline{C_{0,1}^i}$. Consequently, using the
weak$^*$ convergence of $\mu_{F_n}$ to $\mu_{F_\infty}$, it follows
\[
\int g\I_{C_{0,1}^i}d\mu_{F_n}\le \int h
~d\mu_{F_n}\xrightarrow[n\to\infty]{}\int h~d\mu_{F_\infty}\le \int
g\I_{\overline{C_{0,1}^i}}~d\mu_{F_\infty}+\epsilon(k)\max_{x\in
\overline{C_{0,1}^i}}g(x).
\]
Letting $k\to\infty$ we get $\int g\I_{C_{0,1}^i}d\mu_{F_n}\le \int
g\I_{C_{0,1}^i}d\mu_{F_\infty}$. The opposite inequality follows
similarly.
\end{proof}

\begin{lemma}
\label{lem:invariance-of-mu} $\mu_{F_\infty}$ is $F_0$-invariant.
\end{lemma}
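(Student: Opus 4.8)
The plan is to show that for every bounded continuous $g:\c_{0,0}\to\R$ one has $\int g\circ F_0\,d\mu_{F_\infty}=\int g\,d\mu_{F_\infty}$, using the decomposition of $\c_{0,0}$ into $1$-cylinders and comparing with the known $F_n$-invariance of each $\mu_{F_n}$. Since $F_0$ maps each $\c_{0,1}^i$ diffeomorphically onto $\c_{0,0}$, the change-of-variables formula for the conformal measure $m_{F_0}$ reads $\int_{\c_{0,1}^i} (g\circ F_0)\,e^{-\Psi_0}\,dm_{F_0}=\int_{\c_{0,0}} g\,dm_{F_0}$, and together with $\mu_{F_0}=\rho_{F_0}m_{F_0}$ and $\L_{\Psi_0}\rho_{F_0}=\rho_{F_0}$ this gives $\int_{\c_{0,1}^i}(g\circ F_0)\,d\mu_{F_0}=\int_{\c_{0,0}}g\cdot w_0^i\,d\mu_{F_0}$ for an explicit weight $w_0^i$; summing over $i$ recovers $F_0$-invariance of $\mu_{F_0}$ — but of course what we actually need is the same identity for $\mu_{F_\infty}$, which is not yet known to be $F_0$-invariant. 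So instead I will argue directly as follows.

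First I would reduce to a finite sum. Fix a bounded continuous $g$ on $\c_{0,0}$ and $\eps>0$. Write $\int g\circ F_0\,d\mu_{F_\infty}=\sum_i\int_{\c_{0,1}^i}g\circ F_0\,d\mu_{F_\infty}$. By Lemma~\ref{lem:matched} applied to $\mu_{F_\infty}$ (the Gibbs property of $\mu_{F_\infty}$ from Corollary~\ref{cor:gibbs-property-of-mu} gives the same tail control, or one simply takes a limit of the bound in Lemma~\ref{lem:matched}) we can choose $i_0$ so that $\mu_{F_\infty}(\bigcup_{i>i_0}\c_{0,1}^i)<\eps$, and by the uniform tail estimate (Lemma~\ref{lem:uniform decay}) the same holds for all $\mu_{F_n}$ with $n$ large. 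Thus it suffices to compare $\sum_{i\le i_0}\int_{\c_{0,1}^i}g\circ F_0\,d\mu_{F_\infty}$ with $\int_{\c_{0,0}}g\,d\mu_{F_\infty}$ up to $O(\eps\|g\|_\infty)$. For each fixed $i\le i_0$, on $\c_{0,1}^i$ the map $F_0$ is a diffeomorphism onto $\c_{0,0}$, so $g\circ F_0$ is continuous on $\c_{0,1}^i$; by Lemma~\ref{lem:indicator-convergence}, $\int (g\circ F_0)\I_{\c_{0,1}^i}\,d\mu_{F_n}\to\int(g\circ F_0)\I_{\c_{0,1}^i}\,d\mu_{F_\infty}$ as $n\to\infty$. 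Here I must be slightly careful that $g\circ F_0$ rather than $g\circ F_n$ appears: on the matched cylinder $\c_{n,1}^i$ (Claim~\ref{claim:matching}), $F_n\to F_0$ pointwise on the interior (Proposition~\ref{prop:convergent ind}), and $\c_{n,1}^i\triangle\c_{0,1}^i$ has small $\mu_{F_n}$-measure by Lemma~\ref{lem:meas-prox-cylin}, so $\int_{\c_{n,1}^i}g\circ F_n\,d\mu_{F_n}$ and $\int_{\c_{0,1}^i}g\circ F_0\,d\mu_{F_n}$ differ by $o(1)$ as $n\to\infty$; dominated convergence handles the integrand convergence since $\|g\|_\infty<\infty$.

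Now assemble the estimate. For $n$ large,
\[
\sum_{i\le i_0}\int_{\c_{0,1}^i}g\circ F_0\,d\mu_{F_\infty}
\approx \sum_{i\le i_0}\int_{\c_{n,1}^i}g\circ F_n\,d\mu_{F_n}
= \int g\circ F_n\,d\mu_{F_n}-\sum_{i>i_0}\int_{\c_{n,1}^i}g\circ F_n\,d\mu_{F_n},
\]
where the last sum is $O(\eps\|g\|_\infty)$. By $F_n$-invariance of $\mu_{F_n}$ (Theorem~\ref{thm:BIP}(b), as recorded in Section~\ref{sec:press}), $\int g\circ F_n\,d\mu_{F_n}=\int g\,d\mu_{F_n}\to\int g\,d\mu_{F_\infty}$ by weak$^*$ convergence. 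Letting $n\to\infty$ and then $\eps\to0$ gives $\int g\circ F_0\,d\mu_{F_\infty}=\int g\,d\mu_{F_\infty}$ for all bounded continuous $g$, hence $\mu_{F_\infty}$ is $F_0$-invariant. The main obstacle is the bookkeeping in the third paragraph: one is mixing three convergences — $F_n\to F_0$, $\mu_{F_n}\to\mu_{F_\infty}$, and the cylinder symmetric differences shrinking — and one needs the uniform tail bound of Lemma~\ref{lem:uniform decay} to truncate the infinite sum uniformly in $n$ before taking limits, since otherwise the tails could carry mass across the limit. Everything else is routine measure-theoretic manipulation using the conformality and Gibbs estimates already established.
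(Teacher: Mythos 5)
Your proposal is correct and follows essentially the same route as the paper's proof: both use the $F_n$-invariance of $\mu_{F_n}$ together with weak$^*$ convergence, truncate the sum over $1$-cylinders using the uniform tail bound of Lemma~\ref{lem:uniform decay}, match cylinders via Lemma~\ref{lem:matched}, control symmetric differences with Lemma~\ref{lem:meas-prox-cylin}, and invoke Lemma~\ref{lem:indicator-convergence} for the indicator-weighted weak$^*$ convergence. The only presentational difference is that the paper first splits $\bigl|\int\varphi\circ F_n\,d\mu_{F_n}-\int\varphi\circ F_0\,d\mu_{F_\infty}\bigr|$ into a ``change of measure'' term and a ``change of map'' term and then truncates inside the second, whereas you truncate first and then compare cylinder by cylinder; and your appeal to ``dominated convergence'' is slightly loose since the measures vary with $n$ — what is really used, as in the paper, is that $f_n^{\tau_0^i}\to f_0^{\tau_0^i}$ uniformly (finitely many compositions of $C^2$-close maps) on each matched cylinder, combined with the vanishing symmetric-difference mass. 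These are cosmetic differences; the essential argument is the same.
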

\begin{proof}
The $F_0$-invariance of $\mu_{F_\infty}$ is equivalent to
  $$
\int \varphi\circ F_0~d\mu_{F_\infty}= \int\varphi~d\mu_{F_\infty}
  $$
for every continuous $\varphi\colon I\to\mathbb R$. Given any
$\varphi\colon I\rightarrow\mathbb R$ continuous we have by
hypothesis
 $$
\int \varphi~d\mu_{F_n}\rightarrow \int \varphi ~d\mu_{F_\infty}\quad
\mbox{as}\quad n\rightarrow\infty.
 $$ On
the other hand, since $\mu_{F_n}$ is an $F_{n}$-invariant probability
measure, we have
 $$
 \int \varphi~d\mu_{F_n}=\int
(\varphi\circ F_{n})~d\mu_{F_n}\quad\mbox{for every }n\ge0.
 $$
 So, it suffices to prove that
 \begin{equation}
 \int (\varphi\circ F_{n})~d\mu_{F_n}
 \rightarrow \int (\varphi\circ F_0)~d\mu_{F_\infty}
 \quad\mbox{as}\quad n\rightarrow\infty.
 \end{equation}
 We have
 \begin{eqnarray*}
 \lefteqn{
 \left|\int (\varphi\circ F_{n})~d\mu_{F_n} -\int (\varphi\circ F_0)
  ~d\mu_{F_\infty}\right| \le }\\
& & \left|\int (\varphi\circ F_{n})~d\mu_{F_n}
 - \int (\varphi\circ F_0)~d\mu_{F_n}\right|
+\left|\int (\varphi\circ F_0)~d\mu_{F_n}
 - \int (\varphi\circ F_0)~d\mu_{F_\infty}\right|.
 \end{eqnarray*}
Observing that $\varphi\circ F_0$ is continuous on each $C_{0,1}^i$,
we easily deduce from Lemma~\ref{lem:indicator-convergence} and
Lemma~\ref{lem:matched} that the second term in the sum above is
close to zero for large $n$.

The only thing we are left to prove is that the  first term in the
sum above converges to 0 when $n$ tends to $\infty$.  That term is
bounded above by
\begin{equation}
\label{eq:term-1}
 \int \big|\varphi\circ F_{n}-\varphi\circ F_0\big|~d\mu_{F_n}.
 \end{equation}
Take any $\varepsilon>0$. Using Lemma~\ref{lem:uniform decay}, take
$N\ge 1$ \st
 $$
 \sum_{\tau_n^i>N}\mu_{F_n}(C_{n,1}^i)<\varepsilon.
 $$
We write the integral in \eqref{eq:term-1}  as
 \begin{equation}\label{eq:somatau}
  \sum_{\tau_n^i>N}\int_{C_{n,1}^i} \big|\varphi\circ
  F_{n}-\varphi\circ F_0\big|
 ~d\mu_{F_n}+\sum_{\tau_n^i\le N}\int_{C_{n,1}^i} \big|\varphi\circ
 F_{n}-
 \varphi\circ F_0\big|~d\mu_{F_n}.
 \end{equation}
 The first sum in \eqref{eq:somatau} is bounded by
 $2\varepsilon\|\varphi\|_\infty$. Let us now estimate
 the second sum in \eqref{eq:somatau}.

Using Lemma~\ref{lem:matched}, we take $n_1$ sufficiently large so
that for all $n\ge n_1$ and every cylinder $C_{n,1}^i$ with
$\tau_n^i\le N$ there is a matching cylinder $C_{0,1}^i$ with
$\tau_n^i=\tau_0^i$. Moreover, using Lemma~\ref{lem:meas-prox-cylin},
we may assume that $n_1$ is large enough so that $n\ge n_1$ implies
\[
\sum_{\tau_n^i\le N}\mu_{F_n}(C_{n,1}^i\triangle
C_{0,1}^i)<\varepsilon.
\]
For every $i$ such that $\tau_n^i\le N$ we have
\begin{align*}
  \int_{C_{n,1}^i} \big|\varphi\circ F_{n}-\varphi\circ
  F_0\big|~d\mu_{F_n}
  \le & \int_{C_{n,1}^i\cap C_{0,1}^i} \big|\varphi\circ
  f_{n}^{\tau_0^i}  -\varphi \circ f_0^{\tau_0^i}\big|~d\mu_{F_n}\\
  & +\int_{C_{n,1}^i\setminus C_{0,1}^i} \big|\varphi\circ F_{n}-
  \varphi\circ F_0\big|~d\mu_{F_n}.
\end{align*}
Since $f_n\to f_0$ in the $C^k$ topology, there is $n_2\in \mathbb N$
such that for $n\ge n_2$
 $$
 \sum_{\tau_n^i\le N}\int_{C_{n,1}^i\cap C_{0,1}^i}\big|\varphi\circ
 f_{n}^{\tau_n^i}-\varphi\circ
 f_0^{\tau_n^i}\big|~d\mu_{F_n}
 < \varepsilon .
 $$
 On the other hand, for $n\ge n_1$
 $$
 \sum_{\tau_n^i\le N}\int_{C_{n,1}^i\triangle C_{0,1}^i}
 \big|\varphi\circ
 F_{n}-\varphi\circ F_0\big|~d\mu_{F_n}
 \le 2\varepsilon\|\varphi\|_\infty.
 $$
Thus we have for $n\ge \max\{n_1,n_2\}$
 $$
 \int \big|\varphi\circ F_{n}-\varphi\circ F_0\big|~d\mu_{F_n}\le
 \varepsilon\big(4\|\varphi\|_\infty+1\big).
 $$
This proves the result since $\varepsilon>0$ was arbitrary.
\end{proof}

Since $\mu_{F_\infty}$ is an invariant Gibbs measure, uniqueness of
such measures, \cite[Theorem 3.2]{MauUrb}, implies $\mu_{F_\infty}
\equiv \mu_{F_0}$.

\begin{remark}
\label{rem:convergence-whole-seq} Observe that the whole sequence
$\mu_{F_n}$ converges in the weak$^*$ topology to $\mu_{F_0}$. This
is because any subsequence $\left(\mu_{F_{n_i}}\right)_i$ admits a
convergent subsequence $\left(\mu_{F_{n_{i_j}}}\right)_j$, whose
weak$^*$ limit, $\mu_{F_\infty}$, is Gibbs and $F_0$-invariant, by
Corollary~\ref{cor:gibbs-property-of-mu} and
Lemma~\ref{lem:invariance-of-mu}. Hence, by uniqueness,
$\mu_{F_{n_{i_j}}}\to\mu_{F_0}$, in the weak$^*$ topology, which
clearly implies the statement.
\end{remark}

\section{Continuous variation of equilibrium states}
\label{sec:continuity-equilibrium-states}

So far, we managed to prove that if $\|f_n- f_0\|_{C^2}\to 0$, then
the induced Gibbs measures converge in the weak$^*$ topology, ie,
$\mu_{F_n}\to\mu_{F_0}$.  Similarly to \eqref{eq:lift}, we let
$\mu^*_f/\int \tau d\mu_F$ be
the projection of the measure $\mu_F$, i.e.
\begin{equation}
\label{eq:saturation-definition}
\mu^*_f=\sum_{i=1}^\infty\sum_{k=0}^{\tau_{i}-1}
f^k_*\left(\mu_F|C_1^i\right)
\end{equation}
By Lemma~\ref{lem:uniform decay}, the total mass of this measure
$\mu^*_{f_n}(I)$ is uniformly bounded in $n$. Observe that, for some
fixed $t\in U_{\F}$, the unique equilibrium state of $f_n$ for the
potential $-t\log\left|Df_n\right|$ is such that
$\mu_n=\mu^*_{f_n}/\mu^*_{f_n}(I)$, for every $n\ge 0$. Consequently,
the proof of Theorem~\ref{thm:main} will be complete once we prove:

\begin{proposition}
 For every continuous
$g:I\rightarrow I$,
\[
\int g~d\mu^*_{f_n}\xrightarrow[n\rightarrow\infty]{}\int
g~d\mu^*_{f_0}.
\]
\end{proposition}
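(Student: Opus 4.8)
The plan is to unwind the definition \eqref{eq:saturation-definition} of $\mu^*_{f_n}$ into a sum over induced $1$-cylinders, truncate this sum at a level that is uniform in $n$, and then pass to the limit in the remaining finite sum term by term. For continuous $g$ we have
$$\int g~d\mu^*_{f_n}=\sum_{i\ge1}\sum_{k=0}^{\tau_n^i-1}\int_{C_{n,1}^i}g\circ f_n^k~d\mu_{F_n},$$
and the $i$-th block is bounded in absolute value by $\|g\|_\infty\,\tau_n^i\,\mu_{F_n}(C_{n,1}^i)$. Hence the part of this series coming from $i$ with $\tau_n^i>N$ is at most $\|g\|_\infty\sum_{\tau_n^i>N}\tau_n^i\mu_{F_n}(C_{n,1}^i)$, and, using Lemma~\ref{lem:uniform decay} together with the elementary identity
$$\sum_{j>N}j\,\mu_{F_n}\{\tau_n=j\}=(N+1)\,\mu_{F_n}\{\tau_n>N\}+\sum_{j>N}\mu_{F_n}\{\tau_n>j\},$$
this is bounded by a quantity depending only on $\F$, $t$, $g$ and $N$ which tends to $0$ as $N\to\infty$; this is exactly the uniform integrability of $\tau_n$ against $\mu_{F_n}$ that makes $\mu^*_{f_n}(I)$ uniformly bounded. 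So, given $\eps>0$, I would first fix $N$ so that both for $n=0$ and for all large $n$ the $\{\tau_n^i>N\}$-part of $\int g~d\mu^*_{f_n}$ is smaller than $\eps$.

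It then remains to handle the finite sum over those $i$ with $\tau_n^i\le N$. By Lemma~\ref{lem:matched} (and the matching machinery of Section~\ref{sec:gibbs-property}), for all large $n$ these are precisely the cylinders matched to the finitely many $C_{0,1}^i$ with $\tau_0^i\le N$, and for a matched pair $\tau_n^i=\tau_0^i$; thus only finitely many pairs $(i,k)$ with $0\le k<\tau_0^i$ occur, and it is enough to show, for each of them,
$$\int_{C_{n,1}^i}g\circ f_n^k~d\mu_{F_n}\xrightarrow[n\to\infty]{}\int_{C_{0,1}^i}g\circ f_0^k~d\mu_{F_0}.$$
I would split the difference into two steps. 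First, keeping the measure $\mu_{F_n}$, replace $f_n^k$ by $f_0^k$ and $C_{n,1}^i$ by $C_{0,1}^i$: the error from changing the map is at most $\|g\circ f_n^k-g\circ f_0^k\|_\infty$, which tends to $0$ because $\|f_n-f_0\|_{C^2}\to0$ forces $f_n^k\to f_0^k$ uniformly for each fixed $k$ and $g$ is uniformly continuous; the error from changing the domain is at most $\|g\|_\infty\,\mu_{F_n}(C_{n,1}^i\triangle C_{0,1}^i)$, which tends to $0$ by Lemma~\ref{lem:meas-prox-cylin}. Second, pass from $\mu_{F_n}$ to $\mu_{F_0}$ in $\int_{C_{0,1}^i}g\circ f_0^k~d\mu_{F_n}$: the integrand extended by $0$ outside $C_{0,1}^i$ is not continuous on $I$, so weak$^*$ convergence does not apply directly, but $g\circ f_0^k$ is continuous on $\overline{C_{0,1}^i}$, and Lemma~\ref{lem:indicator-convergence}, combined with $\mu_{F_n}\to\mu_{F_0}$ from Remark~\ref{rem:convergence-whole-seq}, gives exactly this convergence.

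Assembling the pieces by an $\eps/3$-type argument — fix $N$ for the two tails, then take $n$ large enough to make the finitely many matched blocks close — completes the proof. I expect the main obstacle to be precisely what the preceding lemmas are designed for: controlling the infinite sum of inducing times uniformly in $n$, which needs the uniform tail decay of Lemma~\ref{lem:uniform decay}, and coping with the discontinuity of the integrands $g\circ f_0^k\,\I_{C_{0,1}^i}$ at cylinder boundaries, which is handled by Lemma~\ref{lem:indicator-convergence} (ultimately because Gibbs measures give no mass to cylinder boundaries). The "matching" of the finitely many low-inducing-time cylinders via Lemma~\ref{lem:matched} is the bridge that makes the term-by-term comparison legitimate.
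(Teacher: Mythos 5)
Your proposal is correct and follows essentially the same route as the paper: truncate the sum over $1$-cylinders at a level $N$ made small uniformly in $n$ via Lemma~\ref{lem:uniform decay}, match the finitely many remaining cylinders via Lemma~\ref{lem:matched}, and then control each block by splitting the error into a change-of-map term (uniform convergence $f_n^k\to f_0^k$), a change-of-domain term (Lemma~\ref{lem:meas-prox-cylin}), and a change-of-measure term (Lemma~\ref{lem:indicator-convergence}). The paper groups these into three explicit terms $E_1,E_2,E_3$ rather than your two steps, but the decomposition and the lemmas invoked are the same.
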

\begin{proof}
First observe that as $I$ is compact, $g$ is uniformly continuous and
$\|g\|_\infty<\infty$.

Let $\varepsilon>0$ be given. We look for $n_0\in\N$ sufficiently large
so that for every $n>n_0$
\[
\left| \int g~d\mu^*_{f_n}-\int g~d\mu^*_{f_0}\right|<\varepsilon
\]
Recalling  \eqref{eq:saturation-definition} we may write for any
integer $N$
\[
\mu^*_{f_n}=\sum_{\tau_n^i\le
N}\sum_{k=0}^{\tau_n^i-1}(f_n^k)_*(\mu_{F_n}|C_{n,1}^i)+\eta_{f_n}
\mbox{ and } \mu^*_{f_0}=\sum_{\tau_0^i\le
N}\sum_{k=0}^{\tau_0^i-1}(f_0^k)_*(\mu_{F_0}|C_{0,1}^i)+\eta_{f_0}
\] where
$\eta_{f_n}=\sum_{\tau_n^i>
N}\sum_{k=0}^{\tau_n^i-1}(f_n^k)_*(\mu_{F_n}|C_{n,1}^i)$ and
$\eta_{f_0}=\sum_{\tau_0^i>
N}\sum_{k=0}^{\tau_0^i-1}(f_0^k)_*(\mu_{F_0}|C_{0,1}^i)$. Using
Lemma~\ref{lem:uniform decay} we pick $N$ large enough so that $n\ge
N$ implies
\[
\eta_{f_n}(I)+\eta_{f_0}(I)<\varepsilon/2.
\]
Using Lemma~\ref{lem:matched}, we take $n_1$ sufficiently large so
that for all $n\ge n_1$ and every cylinder $C_{n,1}^i$ with
$\tau_n^i\le N$ there is a matching cylinder $C_{0,1}^i$ with
$\tau_n^i=\tau_0^i$. Let $S_N$ denote the number of 1-cylinders such
that $\tau_n^i\le N$.  To complete the proof of the proposition, for
every $i$ such that $\tau_n^i\le N$ and $k<\tau_n^i$, we must find a
sufficiently large $n_2$ so that for every $n\ge n_2$
\[
E:=\left| \int (g\circ f_{n}^k)
 \I_{C_{n,1}^i}~d\mu_{F_n}-\int (g\circ f_{0}^k)
\I_{C_{0,1}^i}~d\mu_{F_0}\right|<\frac{ \varepsilon}{2S_N}.
\]
We split $E$ into $E_1$, $E_2$ and $E_3$ presented in respective
order:
\begin{align*}
E\le&\left|\int \left[(g\circ f_{n}^k)
 -(g\circ f_{0}^k)\right]
\I_{C_{n,1}^i}d\mu_{F_n}\right|\\ &+\left|\int (g\circ
f_{0}^k)\left[
 \I_{C_{n,1}^i}-\I_{C_{0,1}^i}\right]
d\mu_{F_n}\right|\\ &+\left|\int (g\circ f_{0}^k)
 \I_{C_{0,1}^i}
d\mu_{F_n}-\int (g\circ f_{0}^k) \I_{C_{0,1}^i}d\mu_{F_0}\right|.
\end{align*}
Since
\[
E_1\le \int \left|(g\circ f_{n}^k)
 -(g\circ f_{0}^k)\right|~d\mu_{F_n},
\]
we choose $n_2$ large enough so that for every $n>n_2$ we have
$\left|(g\circ f_{n}^k)
 -(g\circ f_{0}^k)\right|\le\frac{ \varepsilon}{6S_N}$ in order to
 obtain $E_1\le\frac{ \varepsilon}{6S_N}$.

Now,
\[
E_2\le \|g\|_\infty\mu_{F_n}(C_{n,1}^i\triangle C_{0,1}^i).
\]
Using Lemma~\ref{lem:meas-prox-cylin}, we take $n_2$ large enough so
that for all $n>n_2$ we have $E_2\le\frac{\varepsilon}{6S_N}$.

Regarding the last term, Lemma~\ref{lem:indicator-convergence} allows
us to conclude that if $n_2$ is sufficiently large then for all
$n>n_2$ we have $E_3\le\frac{ \varepsilon}{6S_N}$.
\end{proof}

\begin{proof}[Proof of Theorem~\ref{thm:strong-stat-stab}]
Alves and Viana, in \cite{AV:2002}, give some abstract conditions for
statistical stability of physical measures in the strong sense, that
is, convergence of densities in the sense of \eqref{eq:cty}.
Essentially, they consider a family $\mathcal U$ of $C^k$ ($k\geq2$)
maps  admitting an inducing scheme. Their main result,
\cite[Therorem~A]{AV:2002}, asserts that if some uniformity
conditions, which they denote by $U_1$, $U_2$ and $U_3$, hold within
the family, then one gets strong statistical stability. Condition $U_1$ requires that cylinders with
finite inducing times are arbitrarily close with respect to the
reference measure, just as we have shown for our inducing schemes in
the proof of Lemma~\ref{lem:meas-prox-cylin}. Condition $U_2$
requires uniformity on the decay of the tail of the inducing times,
which is covered, in our case, by Lemma~\ref{lem:uniform decay}.
Condition $U_3$ demands that the constants involved on the estimates
of the induced map (such as bounded distortion, derivative growth,
backward contraction, etc.) can be chosen uniformly in a
neighbourhood of each map $f\in\mathcal U$. This also holds in the
present situation as it has been discussed during the proof of
Lemma~\ref{lem:uniform decay}.

Consequently, our inducing schemes and their properties put us
trivially in the setting of Alves and Viana, meaning that both
$\F_e(r, \ell, C,\alpha)$ and $\F_p(r, \ell, C,\beta)$ meet all the
requirements of the family $\mathcal U$ in
\cite[Therorem~A]{AV:2002}, from which we conclude that
\[
\F\ni f\mapsto \frac{d\mu_f}{dm}
\]
is continuous as in \eqref{eq:cty}, where $\F$ stands for either
$\F_e(r, \ell, C,\alpha)$ or $\F_p(r, \ell, C,\beta)$ and $m$ denotes
Lebesgue measure.
\end{proof}

\begin{remark}
Note that the theory presented here extends to Manneville-Pomeau maps
$f:x\mapsto x+x^{1+\alpha}\ ({\rm mod }\ 1)$ for $\alpha\in (0,1)$.
Given such a map, and a potential $\phi_t:=-t\log|Df|$, it is
straightforward to prove an equivalent of \cite[Theorem 1]{BTeqnat},
yielding an equilibrium state $\mu_t$ for $t\in [\delta,1]$ for some
$\delta<0$.  One main difference in proving statistical stability for
these measures is that in the proofs of Proposition~\ref{prop:Fn
fringe} and Lemma~\ref{lem:meas-prox-cylin} for example, to estimate
the measure of sets $\c_{0,k}^i\triangle \c_{n,k}^i$ we can no longer
assume that no two cylinders for the inducing schemes are adjacent.
Above, this property enabled us to estimate $\c_{0,k}^i\triangle
\c_{n,k}^i$ using the measure of 1-cylinders.  However, when, as in
the Manneville-Pomeau case, we do not have this property, we can use
the measure of $k$-cylinders to give us the required estimates
instead.

In broad terms, the theory presented here will also go through in more general families of maps, for example to simple generalisations of Manneville-Pomeau maps.  As we have seen, the important ingredients are that the inducing schemes for the families can be chosen so that for nearby maps, the inducing schemes are `close';  that the thermodynamic formalism in Section~\ref{sec:press} goes through for the inducing schemes; and that there are uniform bounds for the decay of the tail sets (as in Lemma~\ref{lem:uniform decay}) for all the inducing schemes.
\label{rmk:M-P}
\end{remark}

\medskip
\noindent Centro de Matem\'atica da Universidade do Porto\\ Rua do
Campo Alegre 687\\ 4169-007 Porto\\ Portugal\\
\texttt{jmfreita@fc.up.pt, http://www.fc.up.pt/pessoas/jmfreita/}\\
\texttt{mtodd@fc.up.pt, http://www.fc.up.pt/pessoas/mtodd/}

\end{document}